\newcommand{\thickhline}{%
    \noalign {\ifnum 0=`}\fi \hrule height 1pt
    \futurelet \reserved@a \@xhline
}
\definecolor{darkblue}{rgb}{0.0,0,0.7} 
\definecolor{darkred}{rgb}{0.7,0,0} 
\newcommand{\darkred}{\color{darkred}} 
\newcommand{\defn}[1]{\emph{\darkred #1}}
\def\S{{\mathcal{S}}}
\def\R{{\mathcal{R}}}
\def\Div{{\sf{Div}}}
\def\S{{\mathcal{S}}}
\newtheorem{theorem}{Theorem}[section]
\newtheorem{prop}[theorem]{Proposition}
\newtheorem{lemma}[theorem]{Lemma}
\newtheorem{cor}[theorem]{Corollary}
\newtheorem{nota}[theorem]{Notation}
\theoremstyle{definition}
\newtheorem{definition}[theorem]{Definition}
\newtheorem{rmq}[theorem]{Remark}
\newtheorem{exple}[theorem]{Example}
\newtheorem{question}[theorem]{Question}
\numberwithin{equation}{section}
\def\ie{\mbox{\it i.e}.}
\title[A new Garside structure on torus knot groups and some complex braid groups]{A new Garside structure on torus knot groups and some complex braid groups}
\author{Thomas Gobet}
\address{Institut Denis Poisson, CNRS UMR 7350, Faculté des Sciences et Techniques, Université de Tours, Parc de Grandmont, 
37200 TOURS, France}
\begin{document}
\maketitle

\bigskip

\bigskip

\begin{abstract}
Several distinct Garside monoids having torus knot groups as groups of fractions are known. For $n,m\geq 2$ two coprime integers, we introduce a new Garside monoid $\mathcal{M}(n,m)$ having as Garside group the $(n,m)$-torus knot group, thereby generalizing to all torus knot groups a construction that we previously gave for the $(n,n+1)$-torus knot group. As a byproduct, we obtain new Garside structures for the braid groups of a few exceptional complex reflection groups of rank two. Analogous Garside structures are also constructed for a few additional braid groups of exceptional complex reflection groups of rank two which are not isomorphic to torus knot groups, namely for $G_{13}$ and for dihedral Artin groups of even type. 
\end{abstract}

\tableofcontents

\thispagestyle{empty}

\section{Introduction}

A \textit{Garside group} is the group of fractions of a cancellative monoid with a few additional properties--see Section~\ref{facts_garside} below for precise definitions and properties. Such a monoid is called a \textit{Garside monoid}. It is named after a seminal paper of Garside from 1969~\cite{garside_69}, who solved the conjugacy problem in the $n$-strand braid group $\mathcal{B}_n$, and gave a new solution to the word problem and a new method to determine the center of $\mathcal{B}_n$. Roughly speaking, a Garside monoid has good divisibility properties, which can be used to solve the above-mentioned questions in the corresponding Garside group. A Garside group also has other fundamental properties, for instance it is torsion-free. We refer the reader to~\cite{Garside} for basics on Garside monoids and groups, or to Section~\ref{facts_garside} below for a collection of results which we shall use in this paper. Note that a given Garside group can have several non-isomorphic Garside monoids, \ie, non-isomorphic Garside monoids can have isomorphic groups of fractions. 

Dehornoy and Paris~\cite{DP} showed that the approach used by Garside could in fact be generalized to many other known groups, leading to the definition of Garside groups and monoids. Apart from the $n$-strand braid group $\mathcal{B}_n$, basic examples of Garside groups include torus knot groups~\cite[Example 4]{DP}, that is, fundamental groups of the complements of torus knots in $S^3$. These are the only knot groups which can be Garside groups, since a Garside group has a nontrivial center, while it was shown by Burde and Zieschang~\cite{BZ} that torus knot groups are the only knot groups with a nontrivial center.

For all $n,m\geq 2$ such that $n<m$ and $n,m$ are coprime, there is a torus knot $T_{n,m}$ and these knots together with the unknot yield a classification of torus knots (see~\cite{Rolfsen}). Several Garside structures for torus knot groups are known. Indeed, at least three of them already appear in~\cite{DP} (see Presentations~\ref{pres_1}, \ref{pres_2} and \ref{pres_3} below), while Picantin~\cite{Picantin_torus} introduced a Garside structure which is similar to, but distinct in general, from Presentation~\ref{pres_2} below. In a few particular cases, like for $n=3$ and $m=4$, still other Garside structures are known (see~\cite{Picantin_torus, Picantin} and Example~\ref{exple_g12} below).      

Another interesting example is given by the case $n=2, m=3$ which, if we exclude the unknot, is the easiest torus knot: the trefoil knot $T_{2,3}$, whose group is isomorphic to the three-strand braid group $\mathcal{B}_3$ (see for instance~\cite[Section 1.1.4]{KT}). It has several known Garside structures~\cite{BKL, DP, Picantin_torus, Garside}. In a previous work~\cite{Gobet}, motivated by questions related to the submonoid of the $n$-strand braid group generated by $\sigma_1, \sigma_1\sigma_2, \dots, \sigma_1\sigma_2\cdots \sigma_{n-1}$, where the $\sigma_i$'s denote the standard Artin generators of $\mathcal{B}_n$, we generalized the exotic Garside monoid $$\langle \ a, b \ \vert \ aba=b^2 \ \rangle$$ for $\mathcal{B}_{3}$ ($a\mapsto \sigma_1, b\mapsto \sigma_1 \sigma_2)$ to all knot groups of torus knots of the form $T_{n,n+1}$, $n\geq 2$. This yielded a new Garside structure for these groups. The aim of this paper is to generalize this result to all torus knot groups, that is, to construct the aforementioned Garside structure for the knot group $G(n,m)$ of $T_{n,m}$ (see~Presentation~\ref{pres_pratique} and Theorem~\ref{main_text} below). 

While most proofs from the case $T_{n,n+1}$ quite readily generalize to the case $T_{n,m}$, the proof of right-cancellativity is much more involved and technical than in the $T_{n,n+1}$-case, due to the presence of additional relations of a new kind which are not present in the presentations of the $T_{n,n+1}$-case (compare Presentation~\ref{pres_brut} and Example~\ref{exple_old} below). Also, unlike in the $(n,n+1)$-case, an interesting feature of this Garside structure is that the left- and right-lcm of the atoms differ if $2r > n$, where $r$ denotes the rest of the Euclidean division of $m$ by $n$ (see Section~\ref{section_lcm_atoms} below).  

A few torus knot groups are isomorphic to complex braid groups of complex reflection groups of rank two (as noted in~\cite{Bannai}). Some other complex braid groups, like the braid group of the exceptional complex reflection group $G_{13}$, have a presentation which looks very similar to what we may call the \textit{standard} presentation (see Presentation~\ref{pres_2} below) of a torus knot group. In the final section of the paper, we construct an analogous Garside structure for the complex braid group of $G_{13}$. Using the fact, established by Bannai~\cite{Bannai}, that this braid group is isomorphic to the Artin group of dihedral type $I_2(6)$ or $G_2$, we can quite surprisingly construct a new Garside structure for all even dihedral Artin groups, similar to the one obtained for $G_{13}$ and torus knot groups. It would be interesting to understand the exact framework in which the Garside structure constructed in this paper can be built. Note that in~\cite{Gobet_toric}, we developed a framework to associate to any torus knot group a quotient group, which is infinite in general, and behaves like a complex reflection group of rank two. This could be a framework to try to realize torus knot groups as so-called interval groups (see~\cite[Theorem 0.5.2]{Dual}) in an interesting way, although these quotients are infinite in general.     

\medskip

The paper is organized as follows. In Section~\ref{facts_garside} we collect general results from the theory of Garside monoids and groups which we will apply later to show that our newly introduced presentation is the presentation of a Garside monoid and group. Section~\ref{sec_gar_tor} reviews existing results on Garside structures on torus knot groups. In Section~\ref{sec_new_pres}, we introduce our new presentations, show that they are presentations of torus knot groups, and enlarge them into two other presentations with more (redundant) relations which will be required to show left- and right-cancellativity in Section~\ref{sec_cancellativity}. In Section~\ref{sec_garside_structure}, we show our main result, namely that the newly introduced presentations are Garside presentations (see Theorem~\ref{main_text}). In Section~\ref{sec_groups_analogous} we construct and analogous Garside structure for the complex braid group of $G_{13}$ and for dihedral Artin groups of even type.

\medskip 
\textbf{Acknowledgments}. Part of this paper was written in February 2022 while the author was attending the semester program \textit{Braids} at ICERM, Providence. He thanks ICERM and the organizers for the invitation and financial support.

\section{Facts from Garside theory}\label{facts_garside}

In this section, we collect a few basic facts about Garside monoids and groups, for later use. We mostly adopt the definitions and conventions from~\cite{Garside}. Note that, while \textit{loc. cit.} introduces most of the results used in this paper in the general framework of \textit{Garside categories}, we will only need them in the case of presented monoids, and therefore reproduce them here in this less general context for the comfort of the reader.

Every monoid has a unit element $1$. Let $M$ be a monoid. 

\begin{definition}[Divisors and multiples]
	Let $a,b,c\in M$. If $ab=c$ holds, we say that $a$ is a \defn{left-divisor} (respectively, that $b$ is a \defn{right-divisor}) of $c$ and that $c$ is a \defn{right-multiple} of $a$ (respectively a \defn{left-multiple} of $b$).
\end{definition}

\begin{definition}[Cancellativity]
	We say that $M$ is \defn{left-cancellative} (respectively \defn{right-cancellative}) if for all $a,b,c\in M$, the equality $ab=ac$ (resp. $ba=ca$) implies $b=c$. If~$M$ is both left- and right-cancellative then we simply say that $M$ is \defn{cancellative}. 
\end{definition}

\begin{theorem}[Ore's Theorem]\label{thm:ore}
	If $M$ is cancellative, and if any two elements $a,b\in M$ admit a common left-multiple, that is, if there is $c\in M$ satisfying $a'a=c=b'b$ for some $a',b'\in M$, then $M$ admits a group of fractions $G(M)$ in which it embeds. Moreover, if $\langle \S \ \vert \ \R \rangle$ is a presentation of the monoid $M$, then $\langle \S \ \vert \ \R \rangle$ is a presentation of $G(M)$. 
\end{theorem}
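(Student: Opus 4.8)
The plan is to realize $G(M)$ concretely as a set of \emph{left fractions}, following the classical localization construction of Ore, and then to identify it with the abstractly presented group. A preliminary reduction promotes the hypothesis from pairs to finite families: using the existence of common left-multiples together with cancellativity, a straightforward induction shows that any finite subset of $M$ admits a common left-multiple. This calculus of common multiples is the mechanism underlying every subsequent step.

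I would then represent a prospective group element by a pair $(b,a)\in M\times M$, to be thought of as the fraction $b\inv a$, and declare $(b,a)\sim(d,c)$ if there exist $p,q\in M$ with $pb=qd$ and $pa=qc$. Reflexivity and symmetry are immediate, and transitivity follows by passing to a common left-multiple of the two intermediate multipliers, so that $\sim$ is an equivalence relation; set $G(M):=(M\times M)/{\sim}$. Multiplication is defined by bringing the two fractions to a shared shape: given $[b,a]$ and $[d,c]$, choose a common left-multiple $pa=qd$ of $a$ and $d$ and put $[b,a]\cdot[d,c]:=[pb,qc]$ (mirroring $b\inv a\,d\inv c=(pb)\inv(qc)$). \textbf{This is the main obstacle}: one must check that the product is independent both of the auxiliary witnesses $p,q$ and of the chosen representatives of each class. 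Each such verification is carried out by introducing further common left-multiples and then cancelling; for instance, independence of the witnesses reduces, after comparing $pa=qd$ with a second choice $p'a=q'd$, to a single application of right-cancellativity. Associativity is handled by the same kind of bookkeeping, and it is this administrative use of left- and right-cancellativity, rather than any deep idea, that constitutes the genuinely delicate part of the argument.

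Once the product is known to be well defined, the group structure appears quickly: $[1,1]$ is a two-sided unit, the inverse of $[b,a]$ is $[a,b]$, and the map $\iota\colon M\to G(M)$, $a\mapsto[1,a]$, is a monoid homomorphism. Here cancellativity is exactly what forces $\iota$ to be injective: $[1,a]=[1,a']$ unwinds to $p=q$ together with $pa=qa'$, whence $pa=pa'$ and $a=a'$. Since every class satisfies $[b,a]=\iota(b)\inv\iota(a)$, the subset $\iota(M)$ generates $G(M)$, so $G(M)$ is a group of fractions of $M$ in which $M$ embeds.

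For the final assertion I would isolate the universal property implicit in the construction: any monoid homomorphism from $M$ into a group factors uniquely through $\iota$. Let $\overline{G}$ be the \emph{group} defined by the presentation $\langle \S \ \vert \ \R\rangle$. The relations $\R$ already hold in $G(M)$ because they hold in $M$ and $\iota$ is a homomorphism, so the generators determine a group homomorphism $\overline{G}\to G(M)$; conversely, the universal property applied to the canonical monoid map $M\to\overline{G}$ yields a homomorphism $G(M)\to\overline{G}$. Both maps fix the generators in $\S$ and are therefore mutually inverse, giving $G(M)\cong\overline{G}$ and showing that $\langle \S \ \vert \ \R\rangle$ is a presentation of $G(M)$.
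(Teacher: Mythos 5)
Your proposal is correct: it is precisely the classical Ore localization by left fractions, which is the argument the paper itself defers to by citing Clifford--Preston (Section~1.10) rather than proving the statement. The key verifications you flag (transitivity of $\sim$ via a common left-multiple of the two multipliers, independence of the product from witnesses via one application of right-cancellativity, injectivity of $\iota$ from left-cancellativity, and the universal property giving the presentation claim) are exactly the right ones and all go through; the only stray remark is that the induction extending common left-multiples from pairs to finite families does not actually need cancellativity.
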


A proof of this theorem can be found for instance in~\cite[Section~1.10]{CP}. It is straighforward to prove the following (a proof can also be found in~\cite[Lemme 1.1]{dehornoy_garside}):

\begin{lemma}\label{lem_partial}
	If $M$ is left-cancellative (respectively right-cancellative) and $1$ is the only invertible element in $M$, then the left-divisibility (resp. right-divisibility) relation on~$M$ is a partial order. 
\end{lemma}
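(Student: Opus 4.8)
The plan is to show that left-divisibility is reflexive, antisymmetric, and transitive, under the hypothesis that $M$ is left-cancellative with $1$ as its only invertible element. (The right-cancellative case is entirely symmetric, so I would prove only the left case and invoke symmetry.) Reflexivity and transitivity are immediate from the monoid axioms: for reflexivity, $a = a\cdot 1$ shows $a$ left-divides $a$; for transitivity, if $a$ left-divides $b$ via $b = ax$ and $b$ left-divides $c$ via $c = by$, then $c = a(xy)$, so $a$ left-divides $c$. Neither of these uses cancellativity or the invertibility hypothesis, so the whole content of the lemma is in antisymmetry.

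For antisymmetry, I would suppose that $a$ left-divides $b$ and $b$ left-divides $a$, and aim to conclude $a = b$. Write $b = ax$ and $a = by$ for some $x,y \in M$. Substituting gives $a = axy$, and also $a = a\cdot 1$. Here is where left-cancellativity enters: from $a\cdot 1 = a\cdot(xy)$ we cancel $a$ on the left to obtain $xy = 1$.

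Now I need to deduce from $xy = 1$ that $x$ (and $y$) is invertible, hence equal to $1$ by hypothesis. The relation $xy = 1$ says $x$ has $y$ as a right inverse, but in a monoid a one-sided inverse need not force invertibility in general; however, the usual argument is to also produce $yx = 1$. From $xy = 1$ we get $y = 1 \cdot y = (xy)\cdots$ — more cleanly, multiply $b = ax$ and $a = by$ the other way: substituting $a = by$ into $b = ax$ gives $b = byx$, so by the same cancellation (this time cancelling $b$ on the left) we obtain $yx = 1$. Thus $x$ is two-sided invertible with inverse $y$, so $x$ is an invertible element, whence $x = 1$ by the hypothesis that $1$ is the only invertible element. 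Then $b = ax = a\cdot 1 = a$, giving antisymmetry.

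\textbf{The main subtlety} I expect is exactly this last point: ensuring that $xy = 1$ genuinely yields invertibility rather than merely a one-sided inverse. The fix is to extract both $xy = 1$ and $yx = 1$ by applying left-cancellativity twice (once cancelling $a$, once cancelling $b$), which requires noticing that the two divisibility relations must be recombined in both orders. Once both one-sided relations are in hand, invertibility of $x$ is immediate and the invertibility hypothesis finishes the proof. Everything else is a routine unwinding of the definitions of left-divisor and left-multiple from the first definition in the excerpt.
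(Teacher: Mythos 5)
Your proof is correct and follows essentially the same route as the paper's: reflexivity and transitivity are immediate, and antisymmetry is obtained by substituting the two divisibility relations into one another, cancelling on the left, and invoking the hypothesis that $1$ is the only invertible element. If anything, you are slightly more careful than the paper, which performs only one cancellation (obtaining $c'c=1$) and immediately concludes $c=c'=1$, whereas you carry out both substitutions to get $xy=1$ and $yx=1$, so that genuine (two-sided) invertibility is explicit before the hypothesis is applied.
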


\begin{definition}[Noetherian divisibility]\label{def:noeth}
	We say that the divisibility in $M$ is \defn{Noetherian} if there exists a function $\lambda: M \rightarrow \mathbb{Z}_{\geq 0}$ such that $$\forall a,b\in M, \lambda(ab)\geq \lambda(a)+\lambda(b)~\text{and}~a\neq 1 \Rightarrow \lambda(a)\neq 0.$$ We say that $M$ is \defn{right-Noetherian} (respectively \defn{left-Noetherian}) if every strictly increasing sequence of divisors with respect to left-divisibility (resp. right-divisibility) is finite. Note that if the divisibility in $M$ is Noetherian, then $M$ is both left- and right-Noetherian.
\end{definition}

Note that Noetherian divisibility implies that the only invertible element in $M$ is $1$ and that $M$ is infinite whenever~$M\neq \{1\}$. In particular, by Lemma~\ref{lem_partial}, in a cancellative monoid $M$ with Noetherian divisibility, both left-divisiblity and right-divisibility induce a partial order on~$M$.

\begin{definition}[Garside monoid]\label{def_garside}
	A \defn{Garside monoid} is a pair~$(M, \Delta)$ where $M$ is a monoid and $\Delta$ is an element of $M$, satisfying the following five conditions:
	\begin{enumerate}
		\item $M$ is left- and right-cancellative,
		\item the divisibility in $M$ is Noetherian,
		\item any two elements in $M$ admit a left- and right-lcm, and a left- and right-gcd,
		\item the left- and right-divisors of the element $\Delta$ coincide and generate $M$,
		\item the set of (left- or right-)divisors of $\Delta$ is finite.
	\end{enumerate}
\end{definition}

Note that under these assumptions, the restrictions of left- and right-divisibility to the set of divisors of $\Delta$ yield two lattice structures on this set. 

In general, checking the above five conditions is a nontrivial task, especially for the left- and right-cancellativity. But these conditions have strong implications. For instance, every Garside group has a solvable word problem, and is torsion-free. We refer the reader to~\cite{Garside} for more on the topic. 

Let $M$ be a Garside monoid. Firstly, by Ore's Theorem, we have that $M$ embeds into its group of fractions $G(M)$.

\begin{definition}[Garside group]
	A group $G$ is a \defn{Garside group} if $G\cong G(M)$ holds for some Garside monoid $M$. 
\end{definition} 

We now recall some known cancellativity criteria for presented monoids which will be used in Section~\ref{sec_cancellativity}. We recall them from~\cite[Section~II.4]{Garside} (extending results from~\cite{dehornoy_garside}; see also~\cite{dehornoy_monoids} for a more general criterion). Most of the definitions given in this section are borrowed from~\cite{Garside}. 

Assume that $M$ is a monoid defined by a presentation $\langle \S \ \vert \ \R \rangle$, where $\S$ is a finite set of generators and $\R$ a set of relations between words in $\S^*$, \ie, words with letters in the generating set $\S$.

\begin{definition}[Right-complemented presentation]
	The presentation $\langle \S \ \vert \ \R \rangle$ is \defn{right-complemented} if $\R$ contains no relation where one side is equal to the empty word, no relation of the form $s\cdots = s\cdots$ with $s\in\mathcal{S}$, and if for $s\neq t\in\mathcal{S}$, there is at most one relation of the form $s\cdots = t\cdots$ in $\mathcal{R}$. 
\end{definition}

Given a right-complemented presentation~$\langle \S \ \vert \ \R \rangle$ of a monoid~$M$, there is a uniquely determined partial map~$\theta: \mathcal{S}\times \mathcal{S}\longrightarrow \mathcal{S}^*$ such that $\theta(s, s)=1$ holds for all $s\in\mathcal{S}$ and such that for $s\neq t\in\mathcal{S}$, the words $\theta(s,t)$ and $\theta(t,s)$ are defined whenever there is a relation $s\cdots = t\cdots$ in $\mathcal{R}$, and are such that this relation is given by $s \theta(s,t) = t \theta(t,s)$. The map~$\theta$ is the \defn{syntactic right-complement} attached to the right-complemented presentation~$\langle \S \ \vert \ \R \rangle$. 

If $\langle\S \ \vert \ \R\rangle$ is right-complemented, then by~\cite[Lemma~II.4.6]{Garside}, the map~$\theta$ admits a unique minimal extension to a partial map from~$\mathcal{S}^* \times \mathcal{S}^*$ to~$\mathcal{S}^*$, still denoted $\theta$, and satisfying \begin{eqnarray}
	\label{c}\theta(s,s)=1,~\forall s\in\mathcal{S},\\
	\theta(bc,a)=\theta(c, \theta(b,a)),~\forall a,b,c\in\mathcal{S}^*, \\
	\theta(a,bc)=\theta(a,b)\theta(\theta(b,a),c),~\forall a,b,c\in\mathcal{S}^*,\\
	\label{d}\theta(1,a)=a\text{ and }\theta(a,1)=1,~\forall a\in\mathcal{S}^*.
\end{eqnarray}

We illustrate some of these relations in the diagram in Figure~\ref{reltheta}. 

\begin{figure}[h!]
	\begin{center}
		\begin{tikzpicture}
			\matrix (m) [matrix of math nodes,row sep=3em,column sep=4em,minimum width=2em]
			{
				\bullet & \bullet & \bullet \\
				\bullet & \bullet & \bullet \\};
			\path[-stealth]
			(m-1-1) edge node [left] {$a$} (m-2-1)
			edge node [below] {$b$} (m-1-2)
			(m-2-1.east|-m-2-2) edge node [below] {$\theta(a,b)$} (m-2-2)
			(m-1-2) edge node [right] {$\theta(b,a)$} (m-2-2)
			(m-1-2) edge node [below] {$c$} (m-1-3)
			(m-2-2.east|-m-2-3) edge node [below] {$\theta(\theta(b,a),c)$} (m-2-3)
			(m-1-3) edge node [right] {$\theta(c, \theta(b,a))$} (m-2-3);
		\end{tikzpicture}
	\end{center}
	\caption{Commutative diagram illustrating the relations $\theta(bc,a)=\theta(c, \theta(b,a))$ and $\theta(a,bc)=\theta(a,b)\theta(\theta(b,a),c)$. Arrows represent elements of the monoid and composition of arrows corresponds to the product in $M^{\mathrm{op}}$.}
	\label{reltheta}
\end{figure}

\begin{definition}[Cube condition]\label{def_cube} Given a right-complemented presentation~$\langle \S \ \vert \ \R \rangle$ of a monoid~$M$ with syntactic right-complement~$\theta$, we say that the \defn{$\theta$-cube condition holds} (respectively that the \defn{sharp $\theta$-cube condition holds}) for a triple~$(a,b,c)\in({\mathcal{S}^*})^3$ if either both $\theta(\theta(a,b), \theta(a,c))$ and $\theta(\theta(b,a), \theta(b,c))$ are defined and represent words in~$\mathcal{S}^*$ that are equivalent under the set of relations~$\mathcal{R}$ (resp. that are equal as words), or neither of them is defined. 
\end{definition} 

\begin{definition}[Conditional lcm]
	We say that a left-cancellative (respectively right-cancellative) monoid~$M$ with no nontrivial invertible element \defn{admits conditional right-lcms} (resp. \defn{admits conditional left-lcms}) if any two elements of $M$ admitting a common right-multiple (resp. a common left-multiple) admit a common right-lcm (resp. a common left-lcm). 
\end{definition}

\begin{prop}[{see~\cite[Proposition~II.4.16]{Garside}}]\label{cancellative_criterion}
	If $\langle \S \ \vert \ \R \rangle$ is a right-complemented presentation of a monoid~$M$ with syntactic right-complement~$\theta$, and if $M$ is right-Noetherian and the $\theta$-cube condition holds for every triple of pairwise distinct elements of $\mathcal{S}$, then $M$ is left-cancellative, and admits conditional right-lcms. More precisely, $a$ and $b$ admit a common right-multiple if and only if $\theta(a,b)$ exists and, then, $a\theta(a,b)=b\theta(b,a)$ represents the right-lcm of these elements.  
\end{prop}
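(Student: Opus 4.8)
The statement is a standard cancellativity criterion, and the plan is to establish it through Dehornoy's right-reversing technique, whose guiding principle is that the combinatorics of the syntactic complement~$\theta$ entirely controls right-divisibility in~$M$. The crux is to promote the $\theta$-cube condition from triples of generators to triples of \emph{arbitrary} words in $\mathcal{S}^*$; once this is in hand, both left-cancellativity and the existence of conditional right-lcms follow by formal manipulation of~$\theta$.

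First I would record the interpretation of the word-extension of~$\theta$ furnished by the recursions~\eqref{c}--\eqref{d}: whenever $\theta(a,b)$ is defined, the equality $a\,\theta(a,b)=b\,\theta(b,a)$ holds in~$M$, so that $\theta$ produces common right-multiples, exactly the output of right-reversing the signed word~$a\inv b$. In this language, the $\theta$-cube condition for a triple is precisely the assertion that the corresponding local reversing diagrams close up, i.e. a local confluence statement.

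Second, and this is the heart of the matter, I would prove that the $\theta$-cube condition holds for every triple $(a,b,c)\in(\mathcal{S}^*)^3$, not merely for triples of generators. This proceeds by induction on the total length of~$a$, $b$, $c$: one peels off a letter from one of the words and rewrites each of $\theta(\theta(a,b),\theta(a,c))$ and $\theta(\theta(b,a),\theta(b,c))$ via the multiplicativity recursions for~$\theta$ as a combination of complements of strictly shorter words, to which the induction hypothesis applies; the base case, where $a,b,c$ are single letters, is the hypothesis of the proposition (triples with a repeated generator being trivial since $\theta(s,s)=1$). Here right-Noetherianity is essential: it guarantees that the reversing process terminates, so that the induction is well-founded and every complement encountered is a genuine finite word rather than an infinite formal object. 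I expect this inductive extension to be the principal obstacle, since the recursions nest complements inside complements and one must track with care which of the partial maps are defined, checking that whenever one vertex of the cube is defined so is the opposite one, and that the two resulting words are $\R$-equivalent.

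Finally, with the cube condition available for all words, I would read off the two conclusions. The word-level cube condition is equivalent to confluence of right-reversing, from which one obtains the characterization that two positive words $x,y$ represent the same element of~$M$ if and only if $\theta(x,y)$ and $\theta(y,x)$ are both defined and equal to the empty word. For \emph{left-cancellativity}, suppose $au=av$ in~$M$; the recursions give $\theta(au,av)=\theta(u,\theta(a,av))=\theta(u,v)$ and symmetrically $\theta(av,au)=\theta(v,u)$, so the characterization forces $\theta(u,v)=1=\theta(v,u)$, that is $u=v$ in~$M$. For \emph{conditional right-lcms}, if $a$ and~$b$ admit a common right-multiple then right-reversing of $a\inv b$ terminates, so $\theta(a,b)$ is defined and $a\,\theta(a,b)=b\,\theta(b,a)$ is a common right-multiple; applying the cube condition to the triple $(a,b,c)$ for an arbitrary common right-multiple~$c$ exhibits $a\,\theta(a,b)$ as a left-divisor of~$c$, so it is the right-\emph{lcm}. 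The converse implication is immediate, and uniqueness of the lcm follows from Lemma~\ref{lem_partial}, left-divisibility being a partial order once Noetherianity has excluded nontrivial invertible elements.
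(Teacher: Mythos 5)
A preliminary remark on the comparison: the paper does not prove this proposition at all --- it is imported verbatim from~\cite[Proposition~II.4.16]{Garside}, so the only benchmark is the proof given there, which is indeed the right-reversing/completeness argument you outline. Your architecture matches that source: recognizing $\theta$ as the record of right-reversing, reducing everything to a word-level cube condition, and then reading off the two conclusions. Your first and third steps are sound as sketched; in particular the formal derivation $\theta(au,av)=\theta(u,\theta(a,av))=\theta(u,v)$ via the recursions~\eqref{c}--\eqref{d}, and the extraction of the lcm from an arbitrary common right-multiple, are exactly how left-cancellativity and the lcm characterization follow once completeness is available.

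The gap is in your second step, the one you yourself call the heart of the matter. An induction ``on the total length of $a$, $b$, $c$'' cannot work as described: peeling a letter off $a=sa'$ and applying the recursions replaces the triple $(a,b,c)$ by $(a',\theta(s,b),\theta(s,c))$, and the complements $\theta(s,b)$, $\theta(s,c)$ are in general strictly \emph{longer} than $b$ and $c$ --- in the very monoids of this paper, $\theta(\rho_i,\rho_j)=\rho_n^{qi+B(1,n-i+1)}\rho_{j-i}$ (see~\eqref{syntactic_right}) already has length growing with $q$ and $i$. So the triples to which you propose to apply the induction hypothesis are not shorter, and the induction is not well-founded on length; your phrase ``complements of strictly shorter words'' is precisely the trap. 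This is the true role of the right-Noetherianity hypothesis: in the cited proof the induction runs along the well-founded order that Noetherianity supplies on elements of $M$ (right-divisibility, or a witness function $\lambda$ as in Definition~\ref{def:noeth}), not along word length. Your invocation of Noetherianity --- that it ``guarantees that the reversing process terminates'' --- assigns it the wrong job: definedness of $\theta$ is a separate issue that the statement deliberately leaves open ($\theta(a,b)$ may simply fail to exist, exactly when $a$ and $b$ have no common right-multiple), and termination alone does not make your induction well-founded. Indeed, if a length induction sufficed, Noetherianity could be dropped from the statement altogether, contradicting the known counterexamples to the criterion for non-Noetherian presentations. As written, the principal step of your proof fails and must be replaced by a Noetherian induction.
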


For later use we also state the following result:

\begin{lemma}[{see~\cite[Lemma~II.2.22]{Garside}}]\label{lemma_gcd}
	If $M$ is cancellative and admits conditional right-lcms (respectively conditional left-lcms), then any two elements of $M$ that admit a common left-multiple (resp. a common right-multiple) admit a right-gcd (resp. a left-gcd).  
\end{lemma}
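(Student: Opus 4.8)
The plan is to prove the right-gcd statement; the left-gcd statement then follows by the evident left--right symmetry, applying the right-gcd result to the opposite monoid $M^{\mathrm{op}}$, which is again cancellative and admits conditional right-lcms precisely when $M$ admits conditional left-lcms. So let $a,b\in M$ admit a common left-multiple, say $c=xa=yb$ with $x,y\in M$. The key idea is \emph{not} to search directly for a maximal common right-divisor of $a$ and $b$ — which would appear to require a Noetherianity hypothesis that is not available here — but instead to extract the right-gcd from a single right-lcm computation performed on the cofactors $x$ and $y$.

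Concretely, first I would observe that $x$ and $y$ admit a common right-multiple, namely $c=xa=yb$ itself. By the conditional right-lcm hypothesis they therefore admit a right-lcm $\ell$; write $\ell=xp=yq$. Since $\ell$ left-divides every common right-multiple of $x$ and $y$, in particular $\ell$ left-divides $c$, so $c=\ell r$ for some $r\in M$. Substituting $\ell=xp$ and comparing with $c=xa$ gives $x(pr)=xa$, and left-cancellativity of $x$ yields $a=pr$; symmetrically, using $\ell=yq$ and $c=yb$ gives $b=qr$. In particular $r$ is a common right-divisor of $a$ and $b$, so the candidate for the right-gcd is this element $r$.

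It then remains to verify that $r$ is the \emph{greatest} common right-divisor, that is, that every common right-divisor of $a$ and $b$ right-divides $r$. Let $e$ be such a common right-divisor, and write $a=a'e$ and $b=b'e$. Then $c=xa=(xa')e$ and $c=yb=(yb')e$, so $(xa')e=(yb')e$, and right-cancellativity of $e$ gives $xa'=yb'$. Thus $xa'$ is a common right-multiple of $x$ and $y$, so by the defining property of the right-lcm, $\ell$ left-divides it, say $xa'=\ell s=x(ps)$. Left-cancelling $x$ gives $a'=ps$, whence $a=a'e=pse$; comparing with $a=pr$ and left-cancelling $p$ yields $r=se$, so $e$ right-divides $r$, as required.

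The argument uses only cancellativity together with the conditional right-lcm property, with no appeal to the Noetherian condition of Definition~\ref{def:noeth}. I expect the only genuine subtlety to be organizational: keeping the numerous left- versus right-cancellations straight, and correctly invoking the universal property of the right-lcm (that $\ell$ left-divides every common right-multiple of $x$ and $y$) at the two points where it is needed — once to produce the candidate $r$ from $c$, and once to establish the minimality of $r$ through the common right-multiple $xa'=yb'$. The conceptual step that makes everything work, which I would emphasize, is the realization that the right-gcd of $a$ and $b$ is governed by the right-lcm of the cofactors $x$ and $y$, rather than by a direct supremum taken over the set of common right-divisors.
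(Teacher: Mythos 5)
Your proof is correct. Note that the paper offers no argument of its own for this lemma: it is quoted verbatim from the literature (Lemma~II.2.22 of~\cite{Garside}), so there is no in-paper proof to compare against. Your argument --- obtaining the candidate $r$ by factoring the common left-multiple $c=xa=yb$ through the right-lcm $\ell$ of the cofactors $x,y$, and then establishing maximality by passing an arbitrary common right-divisor $e$ through the equality $xa'=yb'$ and the universal property of $\ell$ --- is precisely the standard proof of the cited result, and you are right that it uses only cancellativity and conditional right-lcms, with no Noetherianity; the reduction of the left-handed statement to the right-handed one via $M^{\mathrm{op}}$ is also sound.
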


This section is devoted on recalling the definition and a few properties of Garside elements. A proof of the following lemma can be found in~\cite[Lemme 1.8]{dehornoy_garside}.

\begin{lemma}\label{cond_delta_lcm}
	Let $M$ be a cancellative monoid with no nontrivial invertible element (so that left- and right-divisibility relations are partial orders on $M$). Assume that $M$ has conditional (left- and right-) lcms, and that $M$ has an element~$\Delta$ satisfying the following assumptions \begin{itemize}
		\item the sets of left- and right-divisors of $M$ coincide, and form a finite set,
		\item the set of divisors of $\Delta$ generate $M$.
	\end{itemize}
	Then any two elements $x, y\in M$ admit a left-lcm and a right-lcm.
\end{lemma}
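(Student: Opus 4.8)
The plan is to reduce the existence of arbitrary left- and right-lcms to the mere existence of \emph{common} multiples, since by hypothesis $M$ already admits conditional left- and right-lcms: once we know that any two elements $x,y$ share a common right-multiple (resp. a common left-multiple), conditional lcms immediately produce a right-lcm (resp. a left-lcm). It therefore suffices to prove the two divisibility statements: \emph{every element of $M$ left-divides some power $\Delta^k$, and dually right-divides some power of $\Delta$.} Granting these, for $x,y\in M$ I would pick $k$ large enough that both left-divide $\Delta^k$; then $\Delta^k$ is a common right-multiple, and conditional right-lcms finish the argument, with the symmetric choice handling left-lcms. By the left--right symmetry of all the hypotheses, the second divisibility statement follows from the first by a mirror argument.

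The key tool I would construct first is the conjugating automorphism attached to $\Delta$. Since the left- and right-divisors of $\Delta$ coincide and form a finite set $D$, each $s\in D$ left-divides $\Delta$, so by left-cancellativity there is a unique $\partial s$ with $\Delta=s\,\partial s$; moreover $\partial s$ is a right-divisor of $\Delta$, hence lies in $D$, and $s\mapsto\partial s$ is injective, hence (by finiteness of $D$) a bijection of $D$. Setting $\phi=\partial\circ\partial$ on $D$ and combining $\Delta=s\,\partial s$ with $\Delta=\partial s\,\phi(s)$ yields $s\Delta=\Delta\phi(s)$ for every $s\in D$. Because $D$ generates $M$, this relation propagates: for $x=s_1\cdots s_n$ one obtains $x\Delta=\Delta\,\phi(s_1)\cdots\phi(s_n)$, and left-cancellativity shows the right-hand factor depends only on $x$, so $\phi$ extends to a monoid endomorphism of $M$ characterized by $x\Delta=\Delta\phi(x)$. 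Running the same computation with $\partial\inv$ gives $\Delta x=\phi\inv(x)\Delta$, so $\phi$ is in fact an automorphism.

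With $\phi$ at hand, the divisibility statement follows by induction on the length $n$ of a factorization $x=s_1\cdots s_n$ into divisors of $\Delta$, which exists since $D$ generates $M$. The claim is that any product of $n$ elements of $D$ left-divides $\Delta^n$. Assuming $s_1\cdots s_{n-1}$ left-divides $\Delta^{n-1}$, write $\Delta^{n-1}=(s_1\cdots s_{n-1})\,w$; then
\[
\Delta^{n}=\Delta^{n-1}\Delta=(s_1\cdots s_{n-1})\,w\Delta=(s_1\cdots s_{n-1})\,\Delta\,\phi(w)=(s_1\cdots s_{n-1})\,s_n\,\partial(s_n)\,\phi(w),
\]
using $w\Delta=\Delta\phi(w)$ and $\Delta=s_n\,\partial(s_n)$, which exhibits $s_1\cdots s_n$ as a left-divisor of $\Delta^n$. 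The dual identity $\Delta x=\phi\inv(x)\Delta$ together with $\Delta=\partial'(s)\,s$, where $\partial'(s)$ is the left-complement arising from $s$ being a right-divisor of $\Delta$, gives by the symmetric induction that every element right-divides a power of $\Delta$.

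I expect the construction of $\phi$ in the second step to be the main obstacle: everything hinges on upgrading the \emph{local} data $\Delta=s\,\partial s$ on the finite set $D$ to a \emph{global} conjugation relation $x\Delta=\Delta\phi(x)$ valid on all of $M$, and on verifying that this is well defined (independent of the chosen factorization into divisors) and bijective, using only cancellativity, finiteness of $D$, and the coincidence of left- and right-divisors of $\Delta$. The subsequent induction and the final appeal to conditional lcms are then routine.
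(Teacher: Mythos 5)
Your proof is correct and follows essentially the same route as the paper's: reduce to the existence of common multiples via conditional lcms, then show that any product of $m$ divisors of $\Delta$ left-divides $\Delta^m$ by pushing factors past $\Delta$, which rests on the coincidence of the left- and right-divisors of $\Delta$. The only difference is packaging: the paper iterates the local relation $z\Delta=\Delta z'$ for $z$ a divisor of $\Delta$ (never leaving the finite set $\mathrm{Div}(\Delta)$, so no well-definedness issue arises), whereas you first globalize this into the conjugation automorphism $\phi$ of $M$ satisfying $x\Delta=\Delta\phi(x)$ — a correct but slightly heavier step, whose required well-definedness check you do carry out properly via left-cancellativity.
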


\begin{definition}[Garside element]
	If $M$ and $\Delta$ satisfy the assumptions of the above lemma, we say that $\Delta$ is a \defn{Garside element} in $M$. In this case we denote by $\Div(\Delta)$ the set of left-divisors of $\Delta$ (which is equal to the set of right-divisors of $\Delta$). We call its elements the \defn{simples} of $(M,\Delta)$. 
\end{definition}

\section{Garside structures on torus knot groups}\label{sec_gar_tor}

Let $n,m\geq 2$, with $n$ and $m$ coprime. The \defn{$(n,m)$-torus knot group} $G(n,m)$ is the knot group of the torus knot $T_{n,m}$ (see~\cite{Rolfsen}). As $T_{n,m}$ and $T_{m,n}$ are isotopic, we have $G(n,m)\cong G(m,n)$. There is a well-known presentation of $G(n,m)$ given by \begin{equation}\label{pres_1}\langle \ x, y \ \vert\ x^n=y^m
\ \rangle.\end{equation}
It was shown by Schreier~\cite{Schreier} that the center of $G(n,m)$ is infinite cyclic, generated by $x^n=y^m$. Another presentation of $G(n,m)$ is given by \begin{equation}\label{pres_2}\langle \ x_1, x_2, \dots, x_n \ \vert\ 
\underbrace{x_1 x_2 \cdots}_{m~\text{factors}} = \underbrace{x_2 x_3\cdots}_{m~\text{factors}} = \dots = \underbrace{x_n x_1 \cdots}_{m~\text{factors}}
\ \rangle,\end{equation} where indices are taken modulo $n$ if $n<m$.

Since $G(n,m)\cong G(m,n)$, a third presentation is given by \begin{equation}\label{pres_3}\langle \ y_1, y_2, \dots, y_m \ \vert\ 
\underbrace{y_1 y_2 \cdots}_{n~\text{factors}} = \underbrace{y_2 y_3\cdots}_{n~\text{factors}} = \dots = \underbrace{y_m y_1 \cdots}_{n~\text{factors}}
\ \rangle.\end{equation}

For $n=2$ and $m=3$, Presentation~\ref{pres_2} is nothing but the standard presentation of the $3$-strand braid group $\mathcal{B}_3$, while Presentation~\ref{pres_3} is its Birman-Ko-Lee (\cite{BKL}) or dual (\cite{Dual}) presentation. 

Note that, on the algebraic side, it is not obvious that Presentations~\ref{pres_1}, \ref{pres_2}, and \ref{pres_3} define isomorphic groups. The link between these presentations is given in the following Lemma, which is straightforward to check. 

\begin{lemma}
Assume that $n<m$. \begin{enumerate} \item The map $$y_1\mapsto x_1\text{, and for }2\leq i \leq m, y_i\mapsto x_n^{-1} x_{n-1}^{-1} \cdots x_{n+3-i}^{-1} x_{n+2-i} x_{n+3-i}\cdots x_n$$ (where indices in the $x_i$'s are taken modulo $n$) defines an isomorphism between the group with presentation~\ref{pres_3} and the group with presentation~\ref{pres_2}.  
\item The map $$x\mapsto x_1 x_2 \cdots x_m, \ y\mapsto x_2 x_3\cdots x_n x_1$$ (where indices in the $x_i$'s are taken modulo $n$) defines an isomorphism between the group with presentation~\ref{pres_1} and the group with presentation~\ref{pres_2}. 
\end{enumerate}
\end{lemma}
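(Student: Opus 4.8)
The plan is, for each of the two parts, to verify directly that the displayed assignment on generators extends to an isomorphism, using the standard ``Tietze'' scheme: first check that the images of the source generators satisfy the defining relations of the source presentation, so the assignment extends to a group homomorphism $\phi$; then write down a candidate inverse $\psi$ on the target generators, check that its images satisfy the target relations, and finally check that $\phi\circ\psi$ and $\psi\circ\phi$ fix the respective generating sets. Only the first step requires genuine computation; the rest is bookkeeping.

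For part~(1), I would first record that each image $\phi(y_i)$ is the conjugate $g_i^{-1}\,x_{n+2-i}\,g_i$ of a single generator, where $g_i=x_{n+3-i}x_{n+4-i}\cdots x_n$ (the empty word for $i\le 2$), together with the telescoping identity $g_{i+1}=x_{n+2-i}\,g_i$. A short induction on $k$ then gives $\phi(y_i)\phi(y_{i+1})\cdots\phi(y_{i+k-1})=g_i^{-1}\bigl(x_{n+3-i-k}\cdots x_{n+2-i}\bigr)g_i$, so the length-$n$ products appearing in Presentation~\ref{pres_3} become $g_i^{-1}C_{3-i}\,g_i$, where $C_j=x_jx_{j+1}\cdots x_{j+n-1}$ is a full cyclic product of the $x$'s. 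Using $C_{3-i}=x_{2-i}^{-1}C_{2-i}x_{2-i}$ together with the congruence $x_{n+2-i}=x_{2-i}$ (indices mod $n$) collapses $g_i^{-1}C_{3-i}g_i$ to the same element independently of $i$; this is exactly the assertion that the relations of Presentation~\ref{pres_3} hold in the group of Presentation~\ref{pres_2}, so $\phi$ is well defined. The inverse is then read off: $\psi(x_1)=y_1$ and, for $2\le i\le n$, $\psi(x_{n+2-i})=(y_2\cdots y_{i-1})\,y_i\,(y_2\cdots y_{i-1})^{-1}$, which recovers every generator because $n+2-i$ runs through $n,n-1,\dots,2$; checking that $\psi$ respects the relations of Presentation~\ref{pres_2} and that the two composites are the identity on generators is routine.

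For part~(2), set $\delta:=x_1x_2\cdots$ ($m$ factors, indices mod $n$), which by the relations of Presentation~\ref{pres_2} equals each displayed product, so $\phi(x)=\delta$ and $\phi(y)=\gamma$ with $\gamma=x_2\cdots x_nx_1$. The crucial identity is the shift rule $x_i\delta=\delta\,x_{i+m}$, obtained by writing the $i$-th and $(i+1)$-st relations as $\delta=x_iv_i=v_ix_{i+m}$ with a common middle factor $v_i=x_{i+1}\cdots x_{i+m-1}$. Because $\gcd(n,m)=1$, the map $i\mapsto i+m$ generates $\mathbb{Z}/n\mathbb{Z}$, so $\delta^n$ commutes with every $x_i$ and is central; verifying the single relation $\phi(x)^n=\phi(y)^m$, i.e.\ $\delta^n=\gamma^m$, then reduces to checking that $\gamma^m$ is this same central element. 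For the inverse one expresses each $x_i$ as a word in $\delta,\gamma$ by a Euclidean-type descent whose prototype step is the relation $(x_1\cdots x_n)^{-1}\delta=x_1x_2\cdots x_{m-n}$ and its iterates; this mirrors the subtractive Euclidean algorithm on the pair $(n,m)$ and terminates, recovering the individual generators, precisely because $n$ and $m$ are coprime.

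The main obstacle in both parts is the first step --- showing the source relations survive the substitution --- together with producing the inverse. In part~(1) the work is the careful bookkeeping of the nested conjugators $g_i$ under the Presentation~\ref{pres_2} relations, which the telescoping above is designed to make tractable; in part~(2) it is the Euclidean descent inverting the passage from $(x_i)$ to $(\delta,\gamma)$, and the identification of $\gamma^m$ with the central power $\delta^n$. Once these identities are in hand, checking that $\phi$ and $\psi$ are mutually inverse on generators is purely formal.
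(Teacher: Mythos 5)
The paper itself offers no argument for this lemma (it is stated as ``straightforward to check''), so your proposal has to stand on its own, and in both parts it has a genuine gap at exactly the point where the relations of Presentation~\ref{pres_2} must actually be used. In part~(1), your telescoping formula $\phi(y_i)\cdots\phi(y_{i+k-1})=g_i^{-1}\bigl(x_{n+3-i-k}\cdots x_{n+2-i}\bigr)g_i$ presupposes that every subscript $i,\dots,i+k-1$ lies in $\{1,\dots,m\}$, since only then is $\phi(y_j)=g_j^{-1}x_{n+2-j}g_j$ the defining assignment. Consequently your argument covers only those relations of Presentation~\ref{pres_3} in which no $y$-index wraps around modulo $m$, and for those the two sides are equal already in the free group on $x_1,\dots,x_n$ --- indeed your ``collapse'' never invokes a single relation of Presentation~\ref{pres_2}, which is a warning sign. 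The relations that do wrap around carry all the content, and there your formula is simply false: $y_j$ with $j>m$ must be read as $y_{j-m}$, and $x_{n+2-(j-m)}\neq x_{n+2-j}$ modulo $n$ because $m\not\equiv 0\pmod n$. Concretely, for $n=2$, $m=3$ one has $\phi(y_3y_1)=x_2^{-1}x_1x_2\cdot x_1$, whereas your formula predicts $g_3^{-1}C_0\,g_3=x_2^{-1}(x_2x_1)x_2=x_1x_2$; these agree only by virtue of the braid relation $x_1x_2x_1=x_2x_1x_2$, which your proof never uses. The same difficulty is hidden inside the ``routine'' verification that $\psi$ respects the relations of Presentation~\ref{pres_2}, which you skip entirely.

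In part~(2) the shift rule $x_i\delta=\delta x_{i+m}$ and the centrality of $\delta^n$ are correct and are the right tools, but the one relation that must be checked, $\delta^n=\gamma^m$, is never established: saying that it ``reduces to checking that $\gamma^m$ is this same central element'' merely restates the goal. The missing step is short but is the whole point: by the relations of Presentation~\ref{pres_2}, $\delta^n=(x_1\cdots x_m)(x_{m+1}\cdots x_{2m})\cdots(x_{(n-1)m+1}\cdots x_{nm})=x_1x_2\cdots x_{nm}$ (indices mod $n$), while $\gamma^m=x_2x_3\cdots x_{nm+1}$ holds literally as words because the $x$-indices repeat with period $n$; writing $w=x_2\cdots x_{nm}$ and using $x_{nm+1}=x_1$, one gets $\delta^n=x_1w$ and $\gamma^m=wx_1=x_1^{-1}(x_1w)x_1=x_1^{-1}\delta^n x_1=\delta^n$ by centrality. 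Likewise the ``Euclidean descent'' producing the inverse is only gestured at (its prototype step even uses $x_1\cdots x_n$, which is not the image of $y$), so no candidate inverse is actually exhibited. In summary, the Tietze scheme you set up is the right one, but in both parts the substantive verifications --- precisely the places where the defining relations of the target presentation enter --- are either missing or asserted rather than proved.
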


Torus knot groups are examples of groups which possess many non-isomorphic Garside monoids. Indeed, it was shown by Dehornoy and Paris that Presentations~\ref{pres_1}, \ref{pres_2} and \ref{pres_3} define Garside monoids (see~\cite[Examples 4 and 5]{DP}). Picantin~\cite{Picantin_torus} gave Garside presentations for all torus link groups, yielding in the particular case of knots a Garside presentation which is similar to~\ref{pres_2}, but distinct in general. In the specific case of $(n,n+1)$-torus knot groups ($n\geq 2$), a new Garside presentation was given by the author in~\cite{Gobet}, generalizing the exotic presentation $\langle \ a, b \ \vert \ aba=b^2 \ \rangle$ of the $3$-strand braid group $\mathcal{B}_3\cong G(2,3)$. We shall generalize this Garside structure to all torus knot groups in the next sections. 

\begin{exple}\label{exple_g12}
We list several non-isomorphic Garside monoids for $G(3,4)$, which is also isomorphic to the braid group of the exceptional complex reflection group $G_{12}$ (see~\cite{Bannai}). Presentations~\ref{pres_1} to~\ref{pres_3} respectively yield the Garside presentations $$\langle \ x, y \ \vert \ x^2= y^3 \ \rangle, ~\bigg\langle x_1, x_2, x_3 \ \bigg\vert\ 
\begin{matrix}
	x_1 x_2 x_3 x_1\\ = x_2 x_3 x_1 x_2\\
	=x_3 x_1 x_2 x_3
\end{matrix}
\ \bigg\rangle, ~\bigg\langle y_1, y_2, y_3, y_4 \ \bigg\vert\ 
\begin{matrix}
	y_1 y_2 y_3 =y_2 y_3 y_4\\ = y_3 y_4 y_1=y_4 y_1 y_2
\end{matrix}
\ \bigg\rangle,$$ where the Garside element is given respectively by $x^2$, $x_1 x_2 x_3 x_1$, and $y_1 y_2 y_3$. Picantin's presentation~\cite[Lemma 3.2]{Picantin_torus} yields the presentation $$\langle \ \sigma_1,\sigma_2,\sigma_3 \ \vert\
\sigma_1 \sigma_2 \sigma_3 \sigma_1=\sigma_2 \sigma_1 \sigma_2 \sigma_3,~\sigma_3 \sigma_1 \sigma_2 \sigma_3=\sigma_1 \sigma_2 \sigma_3 \sigma_2
\ \rangle,$$ which also appears in work of Bessis-Bonnafé-Rouquier~\cite{BBR}. The Garside element is given by $(\sigma_1 \sigma_2 \sigma_3)^4$. Note that Picantin also exhibited alternative Garside monoids for $G(3,4)$, including the monoid $$\langle \ x, y \ \vert \ xyxyxyx=y^2 \ \rangle$$ which has Garside element $y^3$, and even an infinite family of Garside monoids (see~\cite[Remark 5.2]{Picantin_torus}). The presentation obtained in~\cite{Gobet} is given by $$\langle \ \rho_1, \rho_2, \rho_3 \ \vert\ 
\rho_1 \rho_3 \rho_1 = \rho_2 \rho_3,~
\rho_1 \rho_3 \rho_2=\rho_3^2 \ \rangle$$ and the Garside element is given by $\rho_3^4$. \end{exple} 

\section{New presentations}\label{sec_new_pres}

Let $n, m$ be two integers such that $2 \leq n$, $n<m$ and $n$ and $m$ are coprime. Let $m=qn+r$ be the Euclidean division of $m$ by $n$ ; in particular $0<r<n$. Consider the monoid $\mathcal{M}(n,m)$ defined by the presentation $\langle \mathcal{S} \ \vert \ \mathcal{R} \rangle$ given by 
\begin{equation}\label{pres_brut}
\bigg\langle \omega_1, \omega_2, \dots, \omega_n \ \bigg\vert\ 
\begin{matrix}
\omega_r  \omega_n^q \omega_{i-r} =\omega_i \omega_n^q~\text{if }r < i \leq n,\\
\omega_r  \omega_n^q \omega_{n+i-r}=\omega_i \omega_n^{q+1}~\text{if }1\leq i <r.
\end{matrix}
\ \bigg\rangle
\end{equation}
We will denote by $\mathcal{G}(n,m)$ the group defined by the same presentation. 
\begin{lemma}\label{noeth_new}
The map $\lambda :\{\omega_1, \omega_2, \dots, \omega_{n}\}\longrightarrow \mathbb{Z}_{\geq 0}$, $\omega_i\mapsto i$ extends to a uniquely defined length function $\lambda$ on $\mathcal{M}(n,m)$ satisfying $\lambda(ab)=\lambda(a)+\lambda(b)$ for all $a,b\in \mathcal{M}(n,m)$. In particular, the divisibility in $\mathcal{M}(n,m)$ is Noetherian, and $\mathcal{M}(n,m)$ is both left- and right-Noetherian.  
\end{lemma}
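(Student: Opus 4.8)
The plan is to use the standard fact that a length function on a presented monoid is obtained by checking that the proposed generator-values are compatible with the defining relations. Concretely, the assignment $\omega_i \mapsto i$ extends uniquely to an additive map $\lambda \colon \mathcal{S}^* \to \mathbb{Z}_{\geq 0}$ on the free monoid on $\mathcal{S} = \{\omega_1,\dots,\omega_n\}$, namely $\lambda(\omega_{j_1}\cdots \omega_{j_k}) = j_1 + \cdots + j_k$. This map descends to a well-defined function on $\mathcal{M}(n,m) = \mathcal{S}^*/{\sim}$ if and only if it takes the same value on the two sides of each relation in $\mathcal{R}$; since $\lambda$ is additive on $\mathcal{S}^*$, the induced function on the quotient then automatically satisfies $\lambda(ab) = \lambda(a)+\lambda(b)$, and it is the unique such extension because $\mathcal{S}$ generates the monoid.

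The only thing to verify is therefore that both families of relations in Presentation~\ref{pres_brut} are homogeneous for $\lambda$. For the first family, with $r < i \leq n$, the left-hand side $\omega_r \omega_n^q \omega_{i-r}$ has value $r + qn + (i-r) = qn + i$, which equals the value $i + qn$ of the right-hand side $\omega_i \omega_n^q$. For the second family, with $1 \leq i < r$, the left-hand side $\omega_r \omega_n^q \omega_{n+i-r}$ has value $r + qn + (n+i-r) = (q+1)n + i$, matching the value $i + (q+1)n$ of $\omega_i \omega_n^{q+1}$. Hence $\lambda$ is constant on each relation and descends as desired.

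It remains to read off the Noetherian statements from Definition~\ref{def:noeth}. The equality $\lambda(ab) = \lambda(a) + \lambda(b)$ a fortiori gives the required inequality $\lambda(ab) \geq \lambda(a) + \lambda(b)$. Moreover every generator satisfies $\lambda(\omega_i) = i \geq 1$, so any nonempty word has strictly positive value; consequently $\lambda(a) = 0$ forces $a$ to be the empty product, i.e.\ $a = 1$, which is precisely the implication $a \neq 1 \Rightarrow \lambda(a) \neq 0$. Thus the divisibility in $\mathcal{M}(n,m)$ is Noetherian, and the final sentence of Definition~\ref{def:noeth} then yields that $\mathcal{M}(n,m)$ is both left- and right-Noetherian.

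Since the argument is a direct compatibility check, there is no substantial obstacle; the only point requiring care is confirming that each defining relation is $\lambda$-homogeneous, which is exactly the computation above and is where the specific shape of the exponents $q$ and $q+1$, tied to the Euclidean division $m = qn + r$, enters.
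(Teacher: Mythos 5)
Your proof is correct and follows exactly the paper's approach: the paper's own proof simply observes that it suffices to check $\lambda$ takes equal values on both sides of every defining relation (calling this check ``immediate''), which is precisely the homogeneity computation you carry out explicitly, followed by the same appeal to Definition~\ref{def:noeth} for the Noetherian conclusions.
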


\begin{proof}
It suffices to check that the extension of $\lambda$ to $\S^*$ takes the same value on each side of any relation in $\R$, which is immediate.  
\end{proof}

\begin{lemma}\label{lem_red_garside}
We have $(\omega_r \omega_n^q)^{n-1}\omega_r = \omega_n^{m-q}$. Hence $$\omega_n^m=(\omega_r \omega_n^q)^n=(\omega_n^q\omega_r )^n.$$
\end{lemma}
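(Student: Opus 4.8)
The plan is to treat $P:=\omega_r\omega_n^q$ as a left-acting rewriting operator and to follow what happens to a single generator $\omega_j$ when we multiply it by $P$ on the left. First I would repackage the two families of defining relations in~\ref{pres_brut} into one uniform rule. Writing indices modulo $n$ with representatives in $\{1,\dots,n\}$ and letting $\langle k\rangle$ denote the representative of $k$, the relations say exactly that for every $j\in\{1,\dots,n-1\}$,
$$P\,\omega_j=\omega_{\langle j+r\rangle}\,\omega_n^{\,q+\epsilon_j},\qquad \epsilon_j=\begin{cases}0 & \text{if } j+r\le n,\\ 1 & \text{if } j+r>n,\end{cases}$$
where the cases $j+r\le n$ and $j+r>n$ come from the first and second family of relations respectively (there is no rule for $j=n$, which will turn out to be harmless).

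Next I would prove by induction on $k$, for $0\le k\le n-1$, that
$$(\omega_r\omega_n^q)^k\,\omega_r=\omega_{j_k}\,\omega_n^{\,kq+w_k},\qquad j_k:=\langle (k+1)r\rangle,\ \ w_k:=\sum_{i=0}^{k-1}\epsilon_{j_i}.$$
The base case $k=0$ is immediate. For the inductive step one multiplies on the left by $P$ and applies the uniform rule to $P\,\omega_{j_k}$; this is legitimate precisely because $j_k\ne n$. The crucial point is that $\gcd(n,r)=1$ (which follows from $\gcd(n,m)=1$ and $m=qn+r$), so $j_k=\langle(k+1)r\rangle\ne n$ for all $k\le n-2$, guaranteeing that the rule applies at each of the $n-1$ steps; and $j_{n-1}=\langle nr\rangle=n$, so the last index is exactly $n$.

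It then remains only to evaluate the exponent at $k=n-1$. From the recursion $j_{k+1}=\langle j_k+r\rangle=j_k+r-n\,\epsilon_{j_k}$ I would telescope to obtain $j_{n-1}=nr-n\,w_{n-1}$; since $j_{n-1}=n$ this forces $w_{n-1}=r-1$. Substituting gives $(\omega_r\omega_n^q)^{n-1}\omega_r=\omega_n\cdot\omega_n^{(n-1)q+r-1}=\omega_n^{(n-1)q+r}=\omega_n^{m-q}$, using $m-q=q(n-1)+r$. The two displayed consequences then follow by one further multiplication: $(\omega_r\omega_n^q)^n=(\omega_r\omega_n^q)^{n-1}\omega_r\cdot\omega_n^q=\omega_n^{m-q}\omega_n^q=\omega_n^m$, and likewise $(\omega_n^q\omega_r)^n=\omega_n^q\,(\omega_r\omega_n^q)^{n-1}\omega_r=\omega_n^q\,\omega_n^{m-q}=\omega_n^m$.

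The main obstacle is entirely bookkeeping: one must verify that the uniform rule really does apply at every one of the $n-1$ steps (never landing on the forbidden index $n$ until the very end) and that the wraparounds are counted correctly. Both are governed by the coprimality $\gcd(n,r)=1$, which is the only genuinely nontrivial ingredient; the rest is a careful but routine induction.
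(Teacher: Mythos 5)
Your proof is correct and follows essentially the same route as the paper: both arguments move the rightmost $\omega_r$ leftward by successive applications of the defining relations and use the coprimality of $n$ and $r$ to guarantee the index avoids $n$ until exactly the last step. The only difference is how the final power of $\omega_n$ is pinned down --- the paper reads it off from homogeneity of the relations (via the length function of Lemma~\ref{noeth_new}), whereas you telescope the recursion $j_{k+1}=j_k+r-n\,\epsilon_{j_k}$ to get $w_{n-1}=r-1$; both are valid, and yours is marginally more self-contained.
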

\begin{proof}
For $1\leq i < n$ we have $\omega_r \omega_n^q \omega_i=\omega_{i+r} \omega_n^q$ if $i+r \leq n$ and $\omega_{i+r-n} \omega_n^{q+1}$ otherwise. In fact, considering indices modulo $n$, we have $\omega_r \omega_n^q \omega_i=\omega_{i+r} \omega_n^m$, where $m$ is equal to either $q$ or $q+1$; the value of $m$ can be recovered using the fact that the relations are homogeneous. This means that, starting from the word $(\omega_r \omega_n^q)^{n-1}\omega_r$, the rightmost factor $\omega_r$ can be moved to the left by a successive application of the relations, namely, for suitable exponents $m_j$ we have
\begin{align*}
(\omega_r \omega_n^q)^{n-1}\omega_r &=(\omega_r \omega_n^q)^{n-2}\omega_{2r} \omega_n^{m_1}=(\omega_r \omega_n^q)^{n-3}\omega_{3r} \omega_n^{m_2}=\dots=(\omega_r \omega_n^q)^{n-j-1}\omega_{(j+1)r} \omega_n^{m_j}\\ &=\dots=\omega_{nr} \omega_n^{m_{n-1}}. 
\end{align*}
As indices are considered modulo $n$ we have $\omega_{nr}=\omega_n$ while $\omega_{kr}\neq \omega_n$ for $k<n$ since $n$ and $r$ are coprime. By homogeneity of the defining relations, since the word from which we started has length $m(n-1)+r$, we have $m_{n-1}=m-q-1$. Hence $(\omega_r \omega_n^q)^{n-1}\omega_r=\omega_{nr} \omega_n^{m_{n-1}}=\omega_n^{m-q},$ which concludes the proof. 
\end{proof}

The proof of Lemma~\ref{lem_red_garside} suggests to reindex the $\omega_i$'s as follows. As $n$ and $m$ are coprime $r$ and $n$ are also coprime. If $\alpha$ is a generator of the cyclic group $C_n$ of order $n$, since $n$ and $r$ are coprime we have that $\alpha^r$ is also a generator of $C_n$. Hence considering residues modulo $n$, we have $\{ \overline{r}, \overline{2r}, \overline{3r}, \dots, \overline{nr} \}= \mathbb{Z}/n\mathbb{Z}$. For $1\leq i \leq n$, define $k_i$ as the unique integer such that $1\leq k_i \leq n$ and $\overline{ir}=\overline{k_i}$. It follows from the above observation that $\{k_1, k_2, \dots, k_n\}=\{1, 2, \dots, n\}$. Note that $k_1=r$, $k_n=n$, and $k_{n-1}=n-r$.

 Setting $\rho_i:=\omega_{k_i}$ the defining relations of $\mathcal{M}(n,m)$ or $\mathcal{G}(n,m)$ can be rewritten \begin{align}\label{rel_ref}\rho_{1} \rho_n^q \rho_{i}=\rho_{i+1} \rho_n^{\frac{m+k_i-k_{i+1}}{n}},~\forall 1\leq i \leq n-1.
\end{align} Note that $k_{i+1}-k_i=r$ or $-n+r$. \begin{definition} We define the \defn{defect} $D(i)$ of $i$ as \begin{align}\label{def_defect}D(i):=\frac{r+k_i - k_{i+1}}{n}=\left\{\begin{array}{ll}
1  & \mbox{if } k_i+r>n, \\
0 & \mbox{otherwise},
\end{array}\right.\end{align}
Note that the indices in the $k_{j}$'s can be viewed modulo $n$, and hence $D(i)$ can be defined for $i\in\mathbb{Z}$. We have $D(n-1)=0$ and $D(n)=1$. We say that $i$ is \defn{good} if $D(i)=0$. Otherwise we say that $i$ is \defn{bad}.  
\end{definition} We thus get that the monoid $\mathcal{M}(n,m)$ admits the presentation 
\begin{equation}\label{pres_pratique}
	\bigg\langle \rho_1, \rho_2, \dots, \rho_n \ \bigg\vert\ 
\rho_{1} \rho_n^q \rho_{i}=\rho_{i+1} \rho_n^{q+D(i)}, ~\forall 1\leq i \leq n-1 
	\ \bigg\rangle.
\end{equation}

\begin{exple}\label{exple_old}
	In the particular case where $m=n+1$, we get $r=1$, $q=1$, $k_i=i$ for all $1\leq i\leq n$ and $i$ is good for all $i \neq n$, and the monoid $\mathcal{M}(n,n+1)$ is simply given by the presentation
	\begin{equation}\label{old}
		\bigg\langle \rho_1, \rho_2, \dots, \rho_n \ \bigg\vert\ \rho_1  \rho_n \rho_{i} =\rho_{i+1} \rho_n,~\forall 1 \leq i \leq n-1
		\ \bigg\rangle
	\end{equation}

Note that in this specific case we exactly recover the monoid studied in~\cite{Gobet}.
\end{exple}

\begin{exple}\label{pres_3_5} For $n=3$ and $m=5$, we have $q=1$, $r=2$ $k_1=2, k_2=1, k_3=3$, hence $D(1)=1$ and $D(2)=0$. we get the presentation $$\langle \  \rho_1, \rho_2, \rho_3  \ \vert \ \rho_1 \rho_3 \rho_1=\rho_2 \rho_3^2,~\rho_1 \rho_3 \rho_2=\rho_3^2 \ \rangle$$ This is in fact a presentation of $G(3,5)$ (which by~\cite[Theorem 1 (ix)]{Bannai} is also isomorphic to the braid group of the complex reflection group $G_{22}$). This is a general fact, proven in Proposition~\ref{isom_torus} below. 
\end{exple}

Let us introduce some further notation which will be helpful in proofs. Let $1\leq i \leq j \leq n$. Consider the sequence $S(i,j)$ of pairs \begin{align}\label{seq_sij}(k_i, k_j) \rightarrow (k_{i+1}, k_{j+1}) \rightarrow \cdots \rightarrow (k_{i+n-j-1}, k_{n-1}).\end{align}
This sequence contains $n-j$ pairs (if $n=j$ then by convention it is empty). Let \begin{align*}B(i,j):=\#\{ \ell\in \{i, i+1, \dots, i+n-j-1\}~|~\ell~\text{is bad}\}\end{align*}
In the sequence~\eqref{seq_sij}, the integer $B(i,j)$ is the number of bad indices in first position of the pairs, while $B(j,j)$ is the number of bad indices in the second position. Note that, for $1\leq i \leq j \leq k \leq n$, since $\{i, i+1, \dots, k-1\}=\{i, i+1, \dots, j-1\}\coprod \{j, j+1, \dots, k-1\}$ we have \begin{align}\label{sumofbij} B(i,i+n-j)+B(j,j+n-k)=B(i,i+n-k).
\end{align} 
\begin{definition} We define the \defn{defect} of the sequence $S(i,j)$ by $D(i,j):=B(i,j)-B(j,j)$. Note that \begin{align*}D(i,j)=\left(\sum_{\ell=i}^{i+n-j-1} D(\ell)\right)-\left(\sum_{\ell=j}^{n-1} D(\ell)\right)=\frac{(n-j)r +k_i - k_{i+n-j}}{n} - \frac{(n-j)r + k_j - k_{n}}{n}
\end{align*} yielding \begin{align}\label{dij_alt_def} D(i,j)=\frac{n + k_i - k_j - k_{i+n-j}}{n}.
\end{align}\end{definition}
In other words, the integer $D(i,j)$ is the difference between the number of bad indices in first position of pairs of $S(i,j)$, and the number of bad indices in second position of pairs. We will not require the sequence $S(i,j)$ in our proofs but it is useful to have it in mind to have an interpretation of $D(i,j)$. Most of the time we will use~\eqref{dij_alt_def} as definition for $D(i,j)$, but sometimes we will also use the definition as $B(i,j)-B(j,j)$. In some cases we may require to use $D(i,j)$ even if $i$ or $j$ is not in $\{1,2,\dots, n\}$ in which case we will always use~\eqref{dij_alt_def} as definition again viewing the indices in $k_j$'s modulo $n$.

Since $k_\ell\in\{1,2,\dots, n\}$ for all $\ell$, we have $-2n < k_i - k_j - k_{i+n-j} < n$, and since $D(i,j)$ is an integer we get: 
\begin{lemma}\label{lem_01}
	We have $D(i,j)\in\{0,1\}$. 
\end{lemma}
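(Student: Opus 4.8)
The statement to prove is $D(i,j)\in\{0,1\}$, where by~\eqref{dij_alt_def} we have
\[
D(i,j)=\frac{n + k_i - k_j - k_{i+n-j}}{n}.
\]
The plan is to establish integer bounds on the numerator and then invoke the fact, already asserted in the text, that $D(i,j)$ is an integer. This reduces the whole matter to a short estimate.

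First I would record the ranges of the quantities in the numerator. Since each $k_\ell$ lies in $\{1,2,\dots,n\}$, we have $1\leq k_i \leq n$, $1\leq k_j\leq n$, and $1\leq k_{i+n-j}\leq n$ (indices viewed modulo $n$ when necessary, as the text allows). Hence the combination $k_i - k_j - k_{i+n-j}$ is maximized when $k_i$ is as large as possible and $k_j,k_{i+n-j}$ as small as possible, giving $k_i - k_j - k_{i+n-j}\leq n-1-1=n-2<n$, and minimized when $k_i$ is smallest and the other two largest, giving $k_i - k_j - k_{i+n-j}\geq 1-n-n=1-2n>-2n$. This is exactly the chain of inequalities $-2n < k_i - k_j - k_{i+n-j} < n$ recorded just before the lemma.

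Adding $n$ throughout yields $-n < n + k_i - k_j - k_{i+n-j} < 2n$, so after dividing by $n$ we obtain $-1 < D(i,j) < 2$. The key additional input is that $D(i,j)$ is an integer, which follows either from the formula $D(i,j)=B(i,j)-B(j,j)$ (a difference of cardinalities of sets of bad indices, hence an integer) or equivalently from its expression $\left(\sum_{\ell} D(\ell)\right)-\left(\sum_{\ell} D(\ell)\right)$ as a difference of sums of the integer-valued defects $D(\ell)\in\{0,1\}$. An integer strictly between $-1$ and $2$ must be $0$ or $1$, which is the claim.

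There is no genuine obstacle here: the argument is a one-line bounding estimate combined with the already-established integrality. The only point requiring mild care is that the inequalities must be made strict on the correct sides so that the integrality pins $D(i,j)$ down to precisely $\{0,1\}$ rather than a wider set; that is why I would be deliberate about using $k_\ell\in\{1,\dots,n\}$ (so that $k_i\leq n$ and $k_j,k_{i+n-j}\geq 1$) to secure $k_i-k_j-k_{i+n-j}<n$ and $>-2n$ with the correct strictness.
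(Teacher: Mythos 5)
Your proof is correct and is essentially identical to the paper's own argument: the paper derives $-2n < k_i - k_j - k_{i+n-j} < n$ from $k_\ell\in\{1,2,\dots,n\}$ and concludes via integrality of $D(i,j)$, exactly as you do. Your version merely spells out the strictness of the bounds and the source of integrality (the expression $B(i,j)-B(j,j)$), which the paper leaves implicit.
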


We will often use this fact, sometimes without referring to the above lemma. Also note that $D(i,i+1)=D(i)$ for all $i\in\mathbb{Z}$. 

The following lemma will be helpful to show left-cancellativity of $\mathcal{M}(n,m)$.

\begin{lemma}\label{lem_rel_left}
	\begin{enumerate} \item Let $1\leq i < j \leq n$. We have $(\rho_1 \rho_n^q)^i \rho_{{j-i}}=\rho_{j} \rho_n^{qi+ B(j-i, n-i)}.$
		\item Let $1\leq i \leq n$. We have $(\rho_1 \rho_n^q)^i =\rho_{i} \rho_n^{qi+B(1,n-i+1)}=\rho_{i} \rho_n^{\frac{mi-k_i}{n}}$.\end{enumerate}
\end{lemma}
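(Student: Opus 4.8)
The plan is to prove both statements by induction on $i$, using the defining relations in Presentation~\eqref{pres_pratique} as the only computational input, together with the additivity of the counting function $B$ over concatenations of index intervals. Throughout, write $a:=\rho_1\rho_n^q$, so that the defining relations read $a\rho_\ell=\rho_{\ell+1}\rho_n^{q+D(\ell)}$ for $1\leq \ell\leq n-1$. Observe that $B(c,d)$ counts the bad indices in the integer interval $\{c,c+1,\dots,c+n-d-1\}$, so that prepending or appending a single index to such an interval changes $B$ by the defect of that index; this is the combinatorial identity that will match the exponents of $\rho_n$ on each side.

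For part (1), I fix $j$ and induct on $i$ from $1$ to $j-1$. The base case $i=1$ asserts $a\rho_{j-1}=\rho_j\rho_n^{q+B(j-1,n-1)}$; since the interval defining $B(j-1,n-1)$ is the singleton $\{j-1\}$, we have $B(j-1,n-1)=D(j-1)$, and this is exactly the defining relation $a\rho_{j-1}=\rho_{j}\rho_n^{q+D(j-1)}$. For the inductive step, I write $a^{i+1}\rho_{j-i-1}=a^{i}\,(a\rho_{j-i-1})$, apply the defining relation to get $a\rho_{j-i-1}=\rho_{j-i}\rho_n^{q+D(j-i-1)}$, and then invoke the induction hypothesis for $a^{i}\rho_{j-i}$. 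Collecting the powers of $\rho_n$, it remains to check the exponent identity $B(j-i-1,n-i-1)=D(j-i-1)+B(j-i,n-i)$, which holds because the interval $\{j-i-1,\dots,j-1\}$ attached to the left side is obtained from the interval $\{j-i,\dots,j-1\}$ attached to $B(j-i,n-i)$ by prepending the index $j-i-1$.

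Part (2) is an entirely analogous induction on $i$ from $1$ to $n$. Here the inductive step substitutes the induction hypothesis $a^i=\rho_i\rho_n^{qi+B(1,n-i+1)}$ into $a^{i+1}=a\,a^{i}$ and then applies $a\rho_i=\rho_{i+1}\rho_n^{q+D(i)}$; the base case $i=1$ reduces to $B(1,n)=0$ since the corresponding interval is empty, and the inductive exponent identity $B(1,n-i)=D(i)+B(1,n-i+1)$ again expresses that $\{1,\dots,i\}$ is $\{1,\dots,i-1\}$ with the index $i$ appended. Finally, for the second equality in part (2) I would compute $B(1,n-i+1)=\sum_{\ell=1}^{i-1}D(\ell)$ and telescope using $D(\ell)=(r+k_\ell-k_{\ell+1})/n$ to get $\sum_{\ell=1}^{i-1}D(\ell)=((i-1)r+k_1-k_i)/n=(ir-k_i)/n$ (using $k_1=r$); adding $qi$ and using $m=qn+r$ then yields $qi+B(1,n-i+1)=(mi-k_i)/n$, as required.

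There is no deep obstacle here: the whole argument is a bookkeeping exercise, and the only thing that can go wrong is mismatching the index ranges of $B$ with the intervals actually being counted. The point requiring the most care is therefore verifying that the range restriction in the defining relation ($1\leq \ell\leq n-1$) is respected throughout both inductions, so that every application of $a\rho_\ell=\rho_{\ell+1}\rho_n^{q+D(\ell)}$ is legitimate, and that the two exponent identities above are stated with the correct endpoints.
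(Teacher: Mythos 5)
Your proof is correct and follows essentially the same route as the paper: both amount to applying the defining relations $\rho_1\rho_n^q\,\rho_\ell=\rho_{\ell+1}\rho_n^{q+D(\ell)}$ repeatedly (you phrase it as induction on $i$) and matching exponents via the additivity of $B$ over intervals of indices, with the final identity $B(1,n-i+1)=\sum_{\ell=1}^{i-1}D(\ell)=(ir-k_i)/n$ handled identically. The only cosmetic difference is that the paper deduces part (2) directly from part (1) by writing $(\rho_1\rho_n^q)^i=(\rho_1\rho_n^q)^{i-1}\rho_1\cdot\rho_n^q$, whereas you run a second, parallel induction; both are the same bookkeeping.
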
 

\begin{proof}
	The first point is just the result of a successive application of the defining relations. Indeed, for $i=1$ this is just a defining relation as in~\eqref{pres_pratique} (note that $B(j-1,n-1)=D(j-1)$) and for $i>1$ one just moves the $\rho_{{j-i}}$ to the left as we did in the proof of Lemma~\ref{lem_red_garside}, applying $i$ times a defining relation and using $\sum_{\ell=j-i}^{j-1} D(\ell)=B(j-i,n-i)$. The second point is obtained from the first one as $(\rho_1 \rho_n^q)^{i}=(\rho_1 \rho_n^q)^{i-1} \rho_1 \rho_n^q=\rho_i \rho_n^{q(i-1)+B(1,n-i+1)+q}$ and as $k_1=r$ we have $B(1,n-i+1)=\sum_{\ell=1}^{i-1} D(\ell)=\frac{ir-k_i}{n}.$ 
\end{proof}

\begin{cor}\label{cor_pres_left_c}
	The monoid $\mathcal{M}(n,m)$ and the group $\mathcal{G}(n,m)$ have a presentation with the same set of generators $\mathcal{S}=\{\rho_1, \rho_2, \dots, \rho_n\}$ as before and relations $\mathcal{R}'$ given by  \begin{equation}\label{red_first}\rho_{i} \rho_n^{qi+B(1,n-i+1)} \rho_{{j-i}}=\rho_{j} \rho_n^{qi+B(j-i,n-i)},~\forall 1\leq i < j \leq n.
	\end{equation}
\end{cor}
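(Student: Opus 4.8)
The plan is to show that the two relation sets $\mathcal{R}$ (the relations of~\eqref{pres_pratique}) and $\mathcal{R}'$ generate the same congruence on the free monoid $\mathcal{S}^*$. Since both presentations share the generating set $\mathcal{S}$, this is precisely what is needed to identify $\mathcal{M}(n,m)$ with the monoid presented by $\langle \mathcal{S} \mid \mathcal{R}' \rangle$ via the identity on generators; the corresponding statement for the groups $\mathcal{G}(n,m)$ then follows immediately, as a group defined by a presentation depends only on the congruence (equivalently, the normal closure) generated by its relations. I would therefore verify the two inclusions of congruences separately.

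For the inclusion showing that every relation of $\mathcal{R}$ is a consequence of $\mathcal{R}'$, I would specialize the relations~\eqref{red_first} to $i=1$. The only point requiring attention is the bookkeeping of the exponents of $\rho_n$: the index set defining $B(1,n)$ is empty, so $B(1,n)=0$, while $B(j-1,n-1)$ ranges over the single index $\{j-1\}$ and hence equals $D(j-1)$. With these two values, the $i=1$ relations of $\mathcal{R}'$ read $\rho_1\rho_n^q\rho_{j-1}=\rho_j\rho_n^{q+D(j-1)}$ for $1<j\leq n$, which, after setting $j=i+1$, are exactly the defining relations~\eqref{pres_pratique}. Thus $\mathcal{R}$ sits inside $\mathcal{R}'$ word for word, and the inclusion is immediate.

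For the reverse inclusion, showing that every relation of $\mathcal{R}'$ is a consequence of $\mathcal{R}$, I would simply invoke Lemma~\ref{lem_rel_left}, which was established from the defining relations of $\mathcal{M}(n,m)$. Combining its two points, namely $(\rho_1\rho_n^q)^i=\rho_i\rho_n^{qi+B(1,n-i+1)}$ from part (2) and $(\rho_1\rho_n^q)^i\rho_{j-i}=\rho_j\rho_n^{qi+B(j-i,n-i)}$ from part (1), yields $\rho_i\rho_n^{qi+B(1,n-i+1)}\rho_{j-i}=\rho_j\rho_n^{qi+B(j-i,n-i)}$, which is exactly the relation~\eqref{red_first}. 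Since this identity holds in $\mathcal{M}(n,m)$ for all $1\leq i<j\leq n$, every relation of $\mathcal{R}'$ is a consequence of $\mathcal{R}$.

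These two inclusions give equality of the congruences generated by $\mathcal{R}$ and $\mathcal{R}'$, proving the claim. I do not expect a genuine obstacle here: the substantive content is already packaged in Lemma~\ref{lem_rel_left}, and the entire proof is the combination of its two parts together with the observation that the $i=1$ case of $\mathcal{R}'$ recovers the original relations. The only step meriting a moment's care is the evaluation $B(1,n)=0$ and $B(j-1,n-1)=D(j-1)$ used to recognize those $i=1$ relations as the defining ones.
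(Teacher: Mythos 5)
Your proposal is correct and follows the paper's own argument exactly: the paper likewise deduces the corollary by combining the two parts of Lemma~\ref{lem_rel_left} to show every relation of $\mathcal{R}'$ holds in $\mathcal{M}(n,m)$, and by observing that setting $i=1$ in~\eqref{red_first} recovers $\mathcal{R}$. Your additional bookkeeping ($B(1,n)=0$ and $B(j-1,n-1)=D(j-1)$) is accurate and just makes explicit what the paper leaves implicit.
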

\begin{proof} This is an immediate consequence of the previous lemma, together with the fact that this new set of relations contains $\mathcal{R}$ (set $i=1$). \end{proof}

The advantage of this new (and redundant) presentation $\langle \S \ \vert \ \R'\rangle$ is that it is still right-complemented, with syntactic right-complement $\theta$ given for $1\leq i < j \leq n$ by \begin{align}\label{syntactic_right}\theta(\rho_{i}, \rho_{j})= \rho_n^{qi+B(1,n-i+1)} \rho_{{j-i}}, ~\theta(\rho_{j}, \rho_{i})=\rho_n^{qi+B(j-i,n-i)}.
\end{align}

We will use this presentation to show that $\mathcal{M}(n,m)$ is left-cancellative by showing in Lemma~\ref{cube_left} below that it satisfies the sharp $\theta$-cube condition (which fails if applied to the presentation~$\langle \mathcal{S} \ \vert \ \mathcal{R} \rangle$).

A little more work is required to get a suitable presentation to show right-cancellativity.

\begin{prop}\label{rel_droite}
	Let $1\leq i<j \leq n$. \begin{enumerate}
		\item If $D(i,j)=1$, then $(\rho_1 \rho_n^q)^{n-j} \rho_{i}=\rho_{{n-j+i}}(\rho_1 \rho_n^q)^{n-j} \rho_{j}.$
		\item If $D(i,j)=0$, then $(\rho_1 \rho_n^q)^{n-j+1} \rho_{i}=
		\rho_{{n-j+i+1}} \rho_{n}^{q-1+D(n-j+i)} (\rho_1 \rho_n^q)^{n-j} \rho_{j}.$
	\end{enumerate}
\end{prop}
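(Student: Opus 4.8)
The goal is to establish two identities that move a power of $\rho_1 \rho_n^q$ from the left of a single generator $\rho_i$ to the right of $\rho_j$, thereby giving "right-pushing" relations that should complement the "left-pushing" relations of Lemma~\ref{lem_rel_left}. My plan is to prove both identities by a direct computation that repeatedly applies the defining relations~\eqref{pres_pratique}, tracking the accumulated powers of $\rho_n$ via the defect bookkeeping already set up in the excerpt. The key input is the basic relation $\rho_1 \rho_n^q \rho_\ell = \rho_{\ell+1}\rho_n^{q+D(\ell)}$ together with the first part of Lemma~\ref{lem_rel_left}, namely $(\rho_1\rho_n^q)^i \rho_{j-i} = \rho_j \rho_n^{qi + B(j-i,n-i)}$, which lets me collapse a block of $(\rho_1\rho_n^q)$'s acting on a single generator into one generator times a power of $\rho_n$.

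\textbf{Key steps.}
First I would rewrite the target identities so that both sides are brought into the normal form "(generator) $\times$ (power of $\rho_n$)" using Lemma~\ref{lem_rel_left}(1). For part~(1), the right-hand side $\rho_{n-j+i}(\rho_1\rho_n^q)^{n-j}\rho_j$ can be simplified by applying Lemma~\ref{lem_rel_left}(1) to the subword $(\rho_1\rho_n^q)^{n-j}\rho_j$; this requires $j \leq n$ so that the exponent $n-j$ and the index are in the valid range, and yields $(\rho_1\rho_n^q)^{n-j}\rho_j = \rho_{n}\,\rho_n^{q(n-j)+B(j,n-j)}$ — here the index wraps to $k_n = n$, consistent with $j + (n-j) = n$. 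The left-hand side $(\rho_1\rho_n^q)^{n-j}\rho_i$ likewise collapses to $\rho_{n-j+i}\,\rho_n^{q(n-j)+B(i,n-i+j)}$ by the same lemma (reading indices modulo $n$). Matching the two sides then reduces to a purely numerical identity among the exponents of $\rho_n$, which I expect to verify using the additivity relation~\eqref{sumofbij} for the $B(\cdot,\cdot)$'s together with the hypothesis $D(i,j)=B(i,j)-B(j,j)=1$. For part~(2), the same reduction applies, but now the extra factor $\rho_n^{q-1+D(n-j+i)}$ and the extra power $(\rho_1\rho_n^q)^{n-j+1}$ appear because $D(i,j)=0$ forces a one-step correction; I would handle the leftover $\rho_n$-exponents again via~\eqref{sumofbij} and the definition~\eqref{def_defect} of the defect.

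\textbf{The main obstacle.}
The genuinely delicate point is the exponent bookkeeping: I must check that the power of $\rho_n$ produced on each side agrees, and the two cases $D(i,j)=1$ versus $D(i,j)=0$ are precisely what governs whether the clean exponent $n-j$ or the shifted exponent $n-j+1$ (plus the correction term $\rho_n^{q-1+D(n-j+i)}$) is needed. The crux is to show that when $D(i,j)=0$ the naive "$n-j$" push overshoots by exactly one power of $\rho_n$, which must be absorbed into an extra $\rho_1\rho_n^q$ factor and a defect term $D(n-j+i)$; conversely when $D(i,j)=1$ the exponents balance exactly. I expect the cleanest route is to prove part~(1) first as the "balanced" case, then derive part~(2) by applying one further defining relation to shift the index from $n-j+i$ to $n-j+i+1$, carefully reading the index $n-j+i$ modulo $n$ and invoking the splitting~\eqref{sumofbij} and Lemma~\ref{lem_01} to confirm every exponent is a nonnegative integer (in particular that $q-1+D(n-j+i)\geq 0$). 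Verifying these modular index identities for $k_{n-j+i}$ without sign errors, especially near the wrap-around at $n$, is where the real care is required.
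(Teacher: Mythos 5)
Your proposal follows essentially the same route as the paper's own proof: collapse both sides into the normal form (generator)$\times$(power of $\rho_n$) using Lemma~\ref{lem_rel_left}(1), match the $\rho_n$-exponents via $D(i,j)=B(i,j)-B(j,j)$ to settle the case $D(i,j)=1$, and then obtain part (2) from that computation by one further application of a defining relation shifting the index from $n-j+i$ to $n-j+i+1$. The only correction needed is in your intermediate exponents, where the second arguments of $B$ should read $B(i,j)$ and $B(j,j)$ rather than $B(i,n-i+j)$ and $B(j,n-j)$; since your matching criterion is the correct identity $D(i,j)=B(i,j)-B(j,j)$, the bookkeeping then closes exactly as you describe.
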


\begin{proof}
	By Lemma~\ref{lem_rel_left}~(1) we have
	$(\rho_1 \rho_n^q)^{n-j} \rho_{i}=\rho_{{i+n-j}} \rho_n^{q(n-j)+B(i,j)}.$ On the other hand, also by Lemma~\ref{lem_rel_left}~(1) we get $$\rho_{{n-j+i}}(\rho_1 \rho_n^q)^{n-j}\rho_{j}=\rho_{{n-j+i}}\rho_{n} \rho_n^{q(n-j)+B(j,j)}=\rho_{{n-j+i}}\rho_n^{q(n-j)+B(j,j)+1}.$$
	
	If $D(i,j)=1$, then using that $D(i,j)=B(i,j)-B(j,j)$ we get the claimed relation. 
	
	Now assume that $D(i,j)=0$. Since as we saw above we have $(\rho_1 \rho_n^q)^{n-j} \rho_{i}=\rho_{{i+n-j}} \rho_n^{q(n-j)+B(i,j)}$ we obtain \begin{align*}(\rho_1 \rho_n^q)^{n-j+1} \rho_{i}&=\rho_1 \rho_n^q\rho_{{i+n-j}} \rho_n^{q(n-j)+B(i,j)}=\rho_{{n-j+i+1}} \rho_n^{q(n-j+1)+B(i,j)+D(n-j+i)}\\ &=\rho_{{n-j+i+1}} \rho_n^{q(n-j+1)+B(j,j)+D(n-j+i)}=\rho_{{n-j+i+1}}\rho_n^{q-1+D(n-j+i)} \rho_n\rho_n^{q(n-j)+B(j,j)} \\ &=\rho_{{n-j+i+1}}\rho_n^{q-1+D(n-j+i)} (\rho_1 \rho_n^q)^{n-j} \rho_j, 
		\end{align*} which concludes the proof. 
	\end{proof}
	
	For all $i,j\in\{1,2,\dots, n\}$ with $i<j$, Proposition~\ref{rel_droite} gives us exactly one relation of the form $\cdots \rho_{i}=\cdots \rho_{j},$ which we denote by $R(i,j)$. Let $\mathcal{R}''$ be the set of relations $R(i,j)$. We thus get: 
	
	\begin{cor}\label{cor_pres_right_c}
		The presentation $\langle \mathcal{S} \ \vert \ \mathcal{R}'' \rangle$ is a presentation of the monoid $\mathcal{M}(n,m)$ and the group $\mathcal{G}(n,m)$. 	
	\end{cor}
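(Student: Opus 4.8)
The plan is to prove the corollary by showing that the two relation sets $\mathcal{R}$ and $\mathcal{R}''$ generate the same congruence on $\mathcal{S}^*$ (and, for the group statement, the same normal closure in the free group on $\mathcal{S}$). Since $\mathcal{M}(n,m)=\langle\mathcal{S}\mid\mathcal{R}\rangle$ by \eqref{pres_pratique}, it suffices to establish two inclusions of congruences: first that every relation of $\mathcal{R}''$ is a consequence of $\mathcal{R}$, which gives $\equiv_{\mathcal{R}''}\subseteq\,\equiv_{\mathcal{R}}$; and conversely that every relation of $\mathcal{R}$ is a consequence of $\mathcal{R}''$, which gives $\equiv_{\mathcal{R}}\subseteq\,\equiv_{\mathcal{R}''}$. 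Combining the two yields $\equiv_{\mathcal{R}}\,=\,\equiv_{\mathcal{R}''}$, hence $\langle\mathcal{S}\mid\mathcal{R}''\rangle=\mathcal{M}(n,m)$, and the group statement follows because monoid-derivability of a relation implies its derivability in the free group (alternatively one may invoke Ore's Theorem~\ref{thm:ore}, as $\mathcal{M}(n,m)$ will be shown to embed in $\mathcal{G}(n,m)$).

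The first inclusion is essentially free of charge: it is exactly what the proof of Proposition~\ref{rel_droite} establishes. Indeed, each relation $R(i,j)$ is deduced there purely by successive applications of Lemma~\ref{lem_rel_left}~(1), which is itself derived from the defining relations \eqref{pres_pratique}; thus every $R(i,j)$ holds in $\mathcal{M}(n,m)$, i.e. is a consequence of $\mathcal{R}$. For the second inclusion I would specialize Proposition~\ref{rel_droite} to the case $j=n$. Here the degeneracy must be checked carefully: since $k_n=n$, formula \eqref{dij_alt_def} gives $D(i,n)=\tfrac{n+k_i-k_n-k_i}{n}=0$, so case (2) of the proposition always applies when $j=n$. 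With $n-j=0$ the factor $(\rho_1\rho_n^q)^{n-j}$ disappears and the relation $R(i,n)$ collapses to
\[
\rho_1\rho_n^q\rho_i=\rho_{i+1}\rho_n^{q-1+D(i)}\rho_n=\rho_{i+1}\rho_n^{q+D(i)},
\]
which is precisely the $i$-th relation of \eqref{pres_pratique}. Hence $\{R(i,n)\mid 1\le i\le n-1\}=\mathcal{R}$, so $\mathcal{R}\subseteq\mathcal{R}''$ and the second inclusion holds trivially.

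I do not expect a genuine obstacle here, as the corollary is a bookkeeping consequence of Proposition~\ref{rel_droite}: the only point that requires care is the boundary specialization $j=n$, where one must verify that $D(i,n)=0$ forces the second (rather than the first) case of the proposition and that the resulting relation is literally one of the original defining relations. The conceptual content is that $\mathcal{R}''$ is obtained from $\mathcal{R}$ by adjoining relations $R(i,j)$ with $j<n$ that are already consequences of $\mathcal{R}$, so passing to $\mathcal{R}''$ neither loses nor adds any identifications. Once both inclusions are in place, the equality of congruences (and of normal closures) gives the presentations of $\mathcal{M}(n,m)$ and $\mathcal{G}(n,m)$ simultaneously.
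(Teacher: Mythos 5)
Your proposal is correct and follows essentially the same route as the paper: both directions are handled exactly as in the paper's proof, namely that all relations $R(i,j)$ are consequences of $\mathcal{R}$ by Proposition~\ref{rel_droite}, and that the specialization $j=n$ (where $D(i,n)=0$, which you verify via~\eqref{dij_alt_def} and the paper verifies via the emptiness of $S(i,n)$) recovers the defining relations $\mathcal{R}$ literally inside $\mathcal{R}''$. Your careful check that case~(2) of Proposition~\ref{rel_droite} is the one that applies at $j=n$, and that $\rho_{i+1}\rho_n^{q-1+D(i)}\rho_n=\rho_{i+1}\rho_n^{q+D(i)}$ as words, matches the paper's argument; the only caveat is that your parenthetical appeal to Ore's Theorem would be premature at this stage (cancellativity is not yet established), but your primary justification for the group statement does not rely on it.
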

	
	\begin{proof}
		Note that when $j=n$, we always have $D(i,j)=0$ (the sequence $S(i,j)$ is empty in that case). For $1\leq i < n$, the relation $R(i,n)$ is therefore given by $\rho_1 \rho_n^q \rho_{i}=\rho_{{i+1}} \rho_n^{q+D(i)}$. But these are exactly the defining relations $\mathcal{R}$ of $\mathcal{M}(n,m)$. Hence the defining relations of $\mathcal{M}(n,m)$ are a subset of the relations $\mathcal{R}''$. Conversely, we have seen in Proposition~\ref{rel_droite} that all relations $R(i,j)$ are satisfied in $\mathcal{M}(n,m)$, hence that they are all consequences of the relations $\mathcal{R}$. 	
	\end{proof}	
	
	The above presentation will be helpful to show that $\mathcal{M}(n,m)$ is right-cancellative. This is equivalent to showing that the opposite monoid $\mathcal{M}(n,m)^{\mathrm{op}}$ is left-cancellative. Hence let $\mathcal{T}=\{\tau_{i}\}$ be a set in bijection with $\mathcal{S}=\{\rho_{i}\}$ ($\tau_i \leftrightarrow \rho_i$) and consider the set of relations $(\mathcal{R}'')^{\mathrm{op}}$ between these $\tau_{i}$'s given by reversing the relations $\mathcal{R}$. This yields a presentation $\langle \mathcal{T} \ \vert \ (\mathcal{R}'')^{\mathrm{op}}\rangle$ which is right-complemented. We denote by $\eta$ its syntactic right-complement, given for $1\leq i < j \leq n$ by 
		\begin{align}
			\label{right_less} \eta(\tau_{i}, \tau_{j})&=\left\{ \begin{array}{ll} (\tau_n^q \tau_1)^{n-j} & \mbox{if~}D(i,j)=1, \\ (\tau_n^q \tau_1)^{n-j+1} & \mbox{if~}D(i,j)=0,  \end{array}\right. \\
			\label{right_great} \eta(\tau_j, \tau_i)&=\left\{ \begin{array}{ll}(\tau_n^q \tau_1)^{n-j}\tau_{n-j+i} & \mbox{if~}D(i,j)=1,\\ (\tau_n^q \tau_1)^{n-j}\tau_n^{q-1+D(n-j+i)}\tau_{n-j+i+1} & \mbox{if~}D(i,j)=0.\end{array}\right. 
		\end{align} 

\begin{prop}[Isomorphism with torus knot groups]\label{isom_torus}
The group $\mathcal{G}(n,m)$ defined by Presentation~\ref{pres_pratique} is isomorphic to $G(n,m)$. An isomorphism is given in terms of the generators of Presentations~\ref{pres_1} and~\ref{pres_pratique} by $\rho_{i}\mapsto x^i y^{-qi-B(1,n-i+1)}$ for all $1\leq i \leq n$.
\end{prop}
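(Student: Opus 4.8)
The plan is to exhibit an explicit inverse homomorphism and check that the two maps are mutually inverse. Write $e_i := qi + B(1,n-i+1)$, so that the proposed map is $\phi\colon \rho_i \mapsto x^i y^{-e_i}$; by Lemma~\ref{lem_rel_left}(2) we have the closed form $e_i = (mi-k_i)/n$. Two special values orient the computation: since $k_n = n$ we get $e_n = m-1$, whence $\phi(\rho_n) = x^n y^{1-m} = y$ using $x^n = y^m$; and since $k_1 = r$ we get $e_1 = q$, whence $\phi(\rho_1) = x y^{-q}$.

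First I would check that $\phi$ is a well-defined homomorphism $\mathcal{G}(n,m)\to G(n,m)$ by verifying the defining relations of Presentation~\ref{pres_pratique}. Applying $\phi$ to $\rho_1\rho_n^q\rho_i$ gives $(xy^{-q})y^q(x^iy^{-e_i}) = x^{i+1}y^{-e_i}$, while $\phi$ applied to $\rho_{i+1}\rho_n^{q+D(i)}$ gives $x^{i+1}y^{-e_{i+1}+q+D(i)}$; the two agree precisely when $e_{i+1} = e_i + q + D(i)$. This last identity is immediate from the closed form, since $e_{i+1}-e_i = (m + k_i - k_{i+1})/n = q + (r + k_i - k_{i+1})/n = q + D(i)$, using $m = qn + r$ and the definition~\eqref{def_defect} of the defect. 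Note that the noncommutativity of $x$ and $y$ causes no trouble here, because the factors $y^{-q}$ and $y^q$ cancel adjacently before any $x$-power intervenes.

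Next I would define the candidate inverse $\psi\colon G(n,m)\to\mathcal{G}(n,m)$ by $x\mapsto \rho_1\rho_n^q$ and $y\mapsto \rho_n$, as forced by the formulas $\phi(\rho_n)=y$ and $\phi(\rho_1) = xy^{-q}$ above. To see that $\psi$ is well-defined I must check the single relation $x^n=y^m$, that is, $(\rho_1\rho_n^q)^n = \rho_n^m$; but in the reindexed generators $\rho_1 = \omega_r$ and $\rho_n = \omega_n$, so this is exactly the identity $\omega_n^m = (\omega_r\omega_n^q)^n$ from Lemma~\ref{lem_red_garside}. Finally I would verify that $\phi$ and $\psi$ are mutually inverse on generators: $\phi(\psi(x)) = (xy^{-q})y^q = x$ and $\phi(\psi(y)) = y$, while $\psi(\phi(\rho_i)) = (\rho_1\rho_n^q)^i\rho_n^{-e_i} = \rho_i\rho_n^{e_i}\rho_n^{-e_i} = \rho_i$, where the penultimate equality is again Lemma~\ref{lem_rel_left}(2). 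This shows that $\phi$ is an isomorphism. No step presents a genuine obstacle; the only care needed is the exponent bookkeeping through $e_i$, and the one substantive input, namely $(\rho_1\rho_n^q)^n = \rho_n^m$, is already supplied by Lemma~\ref{lem_red_garside}.
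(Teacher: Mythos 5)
Your proposal is correct and follows essentially the same route as the paper: the same map $\rho_i \mapsto x^i y^{-qi-B(1,n-i+1)}$, the same inverse $x\mapsto \rho_1\rho_n^q$, $y\mapsto\rho_n$ justified by Lemma~\ref{lem_red_garside}, and the same appeal to Lemma~\ref{lem_rel_left}(2) for the mutual-inverse check. The only cosmetic difference is that you verify the relations via the closed form $e_i=(mi-k_i)/n$ rather than the combinatorial identity $B(1,n-i)=B(1,n-i+1)+D(i)$ used in the paper, and you spell out the inverse-composition computations that the paper leaves implicit.
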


\begin{proof}
We first claim that the map $\varphi$ defined on generators of $\mathcal{G}(n,m)$ by $\varphi(\rho_{i})= x^i y^{-qi-B(1,n-i+1)}$ for all $1\leq i \leq n$ extends to a group homomorphism $\mathcal{G}(n,m)\longrightarrow G(n,m)$. Note that $B(1,1)=r-1$ since $B(1,1)$ counts the number of bad indices in $\{1,2,\dots, n-1\}$, and such indices are $n-r+1, n-r+2, \dots, n-1$. We thus have $\varphi(\rho_n)=x^n y^{-qn-r+1}=y^m y^{-m} y = y$. We have $B(1,n)=0$ which yields $\varphi(\rho_1)=xy^{-q}$. We have \begin{align*} 
\varphi(\rho_1)  \varphi(\rho_n)^q \varphi(\rho_{i})&=x y^{-q} y^q x^i y^{-qi-B(1,n-i+1)}=x^{i+1} y^{-qi-B(1,n-i+1)}=x^{i+1} y^{-qi-B(1,n-i)+D(i)}\\ &=x^{i+1}y^{-q(i+1)-B(1,n-i)}y^{q+D(i)}=\varphi(\rho_{{i+1}})\varphi( \rho_n)^{q+D(i)}.
\end{align*}
Hence the relations of Presentation~\ref{pres_pratique} are satisfied, and $\varphi$ extends to a group homomorphism.
Conversely, we show that the map $\psi$ defined on generators of $G(n,m)$ by $\psi(y)=\rho_n$ and $\psi(x)=\rho_1 \rho_n^q$ extends to a group homomorphism from $G(n,m)$ to $\mathcal{G}(n,m)$. We have $\psi(y)^m=\rho_n^m$ while $\psi(x)^n=(\rho_1 \rho_n^q)^n$ which, by Lemma~\ref{lem_red_garside}, is equal to $\rho_n^m$. Hence $\psi$ extends to a group homomorphism. 

The fact that $\varphi\circ\psi=\mathrm{Id}$ is clear, and the fact that $\psi\circ\varphi=\mathrm{Id}$ follows from Lemma~\ref{lem_rel_left}.   
\end{proof}

For later use we prove a few elementary formulas involving defects:

\begin{lemma}\label{ft}
 
\begin{enumerate}
	\item Let $1\leq i < j < n$. If $D(i)\neq D(j)$, then $D(i+1,j+1)=D(j)$. 
	\item Let $1\leq i < j < \ell \leq n$. If $D(i,j)=D(j,\ell)$, then $D(i,\ell)=D(i,j)$.
	\item Let $1 \leq i < j < n$. Then $$ D(i+2, j+1)+D(i+1)=D(i+1,j)+D(j).$$
	\item Let $1\leq i < j < \ell\leq n$. If $D(j,\ell)=D(i,\ell)$, then $D(n-\ell+i, n-\ell+j)=D(i,j)$. 
	\item Let $1\leq i < j-1 \leq n-1$. If $D(i)=D(i,j)=0$ and  $D(i+1,j)=1$, then $D(n-j+i)=1$.
	\item Let $1\leq i < j \leq n$. If $D(i)=1$ and $D(j)=0$, then $D(n-j+i)=D(i+1,j)$.  
\end{enumerate}
\end{lemma}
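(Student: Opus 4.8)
The plan is to show that all six identities are formal consequences of the closed formula~\eqref{dij_alt_def} for $D(i,j)$, combined in three of the cases with the integrality constraint $D(\cdot,\cdot)\in\{0,1\}$ of Lemma~\ref{lem_01}. The only input about the $k_\ell$'s that is needed is the recursion $k_{a+1}=k_a+r-nD(a)$ for all $a$ (indices read modulo $n$), which is merely a restatement of~\eqref{def_defect}. Feeding this recursion into~\eqref{dij_alt_def} and simplifying yields three ``master'' identities, each obtained by a single cancellation:
\begin{align*}
D(i+1,j+1)&=D(i,j)+D(j)-D(i),\\
D(i+1,j)&=D(i,j)+D(n-j+i)-D(i),\\
D(i,j)+D(j,\ell)&=D(i,\ell)+D(n-\ell+i,\,n-\ell+j).
\end{align*}
The first two are computed as the difference of two instances of~\eqref{dij_alt_def}, using $k_{a+1}-k_a=r-nD(a)$; the third by adding the expansions of $D(i,j)$ and $D(j,\ell)$ and comparing with those of $D(i,\ell)$ and $D(n-\ell+i,n-\ell+j)$, where all four $k$-terms match. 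Throughout, indices leaving $\{1,\dots,n\}$ are interpreted through~\eqref{dij_alt_def}, and Lemma~\ref{lem_01} continues to apply to them since its proof only used $k_\ell\in\{1,\dots,n\}$.

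With these in hand the six statements are short deductions. Parts (3), (4) and (5) are pure algebra. Part (4) is the third master identity rewritten: the hypothesis $D(j,\ell)=D(i,\ell)$ cancels two terms and leaves $D(n-\ell+i,n-\ell+j)=D(i,j)$. Part (5) is a substitution into the second master identity: under $D(i)=D(i,j)=0$ it reads $D(i+1,j)=D(n-j+i)$, so $D(i+1,j)=1$ forces $D(n-j+i)=1$. Part (3) follows either by expanding both sides directly via~\eqref{dij_alt_def} (both collapse to $\tfrac1n(n+r+k_{i+1}-k_{j+1}-k_{i+n-j+1})$), or by two applications of the master identities after writing $D(i+1)=D(i+1,i+2)$ and $D(j)=D(j,j+1)$.

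Parts (1), (2) and (6) are the ones where the constraint $D(\cdot,\cdot)\in\{0,1\}$ is used to pin down an otherwise underdetermined value. For (1), the first master identity gives $D(i+1,j+1)=D(i,j)+D(j)-D(i)$; when $D(i)\neq D(j)$ the right-hand side equals $D(i,j)\pm1$, and since both $D(i,j)$ and $D(i+1,j+1)$ lie in $\{0,1\}$ this pins down $D(i,j)$, and one reads off $D(i+1,j+1)=D(j)$ in each case. For (2), setting $d:=D(i,j)=D(j,\ell)$ in the third master identity gives $2d=D(i,\ell)+D(n-\ell+i,n-\ell+j)$ with both summands in $\{0,1\}$, hence $D(i,\ell)=d$. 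For (6), the first master identity with $D(i)=1$, $D(j)=0$ gives $D(i+1,j+1)=D(i,j)-1\geq0$, forcing $D(i,j)=1$; substituting this into the second master identity yields $D(i+1,j)=D(n-j+i)$, which is the assertion.

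I expect no genuine obstacle: every step is an elementary manipulation of~\eqref{dij_alt_def}. The only points requiring care are the consistent reduction of indices modulo $n$ when they fall outside $\{1,\dots,n\}$, and the correct use of Lemma~\ref{lem_01} in parts (1), (2) and (6), where the $\{0,1\}$ dichotomy is exactly what upgrades a linear relation among defects into the stated equality.
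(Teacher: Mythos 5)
Your proof is correct and follows essentially the same route as the paper: every part is an elementary manipulation of~\eqref{dij_alt_def} and~\eqref{def_defect}, with the $\{0,1\}$ dichotomy of Lemma~\ref{lem_01} invoked exactly where the paper appeals to the range of the $k_\ell$'s (parts (1), (2), and the reduction used for (6)). The only difference is organizational: you package the paper's ad hoc per-part computations into three reusable linear identities (each of which is verified by the same cancellations the paper performs inline), which is a cleaner but mathematically identical argument.
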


\begin{proof}
	For the first point, using the definitions of $D(i)$, $D(j)$ and the formula for $D(i+1,j+1)$ given in~\eqref{dij_alt_def}, we get that $$n D(i+1,j+1)=n+k_i - k_j +n(D(j)-D(i)) - k_{i+n-j}.$$ If $D(j)=1$ and $D(i)=0$, this yields $n D(i+1,j+1)=2n + k_i - k_j - k_{i+n-j}$. But the right hand side has to be equal to either $0$ or $n$ as $D(i+1,j+1)\in\{0,1\}$, and the only possibility is that it is equal to $n$ as $k_i, k_j, k_{i+n-j}\in\{1,2,\dots, n\}$. We thus get $D(i+1,j+1)=1=D(j)$. Similarly if $D(j)=0$ and $D(i)=1$ one gets $D(i+1,j+1)=0$.
	
	For the second point, using~\eqref{dij_alt_def} again we get $$n D(i,\ell)=n(D(i,j)+D(j,\ell))+ k_{i+n-j} +k_{j+n-\ell} -n -k_{i+n-\ell}.$$ Distinguishing the cases as in the first point we easily get the claim.  
	
	The third point is the result of a direct computation using~\eqref{dij_alt_def} and~\eqref{def_defect}. 
	
	For the fourth point, the condition $D(j,\ell)=D(i,\ell)$ yields $k_{n-\ell+i}-k_{n-\ell+j}=k_i-k_j$. We then have \begin{align*}n D(n-\ell+i, n-\ell+j)&= n+k_{n-\ell+i}-k_{n-\ell+j}-k_{n+i-j}\\ &=n + (k_i-k_j)+(k_j-k_i+nD(i,j)-n)=n D(i,j),
	\end{align*} where the first equality follows from~\eqref{dij_alt_def} while the second one is obtained also using~\eqref{dij_alt_def} together with the conditions $D(j,\ell)=D(i,\ell)=1$.

   For the fifth point, combining $k_i-k_{i+1}+r=0$, $n+k_i-k_j-k_{i+n-j}=0$ and $k_{i+1}-k_j-k_{i+n+1-j}=0$ yields $k_{i+n-j}-k_{i+n+1-j}=n-r$, hence $D(n-j+i)=1$. 
   
   For the sixth point, we have \begin{align*} nD(i+1,j)&=n+k_{i+1}-k_j-k_{n+i+1-j}=n+k_{i+1}+r-k_{j+1}-k_{n+i+1-j}\\ &=r+k_{n+i-j}-(r+k_{n+i-j})+n+k_{i+1}+r-k_{j+1}-k_{n+i+1-j}\\ &=nD(n-j+i)+nD(i+1,j+1).\end{align*} But by the first point we have $D(i+1,j+1)=D(j)=0$, yielding the expected result. 
\end{proof}

\section{Cancellativity}\label{sec_cancellativity}

The aim of this section is to show that $\mathcal{M}(n,m)$ is both left- and right-cancellative. 

\subsection{Left-cancellativity}

\begin{lemma}\label{cube_left}
The monoid presentation $\langle \mathcal{S}~\vert~\mathcal{R}'\rangle$ for $\mathcal{M}(n,m)$ given in Corollary~\ref{cor_pres_left_c} satisfies the sharp $\theta$-cube condition for every triple $(\rho_i, \rho_j, \rho_k)$ of pairwise distinct elements of $\mathcal{S}$. 
\end{lemma}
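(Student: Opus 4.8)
The plan is to verify the sharp $\theta$-cube condition by a direct computation of the two iterated complements, organized by the relative position of the three indices. Since Definition~\ref{def_cube} is symmetric under exchanging the first two entries of the triple, it suffices to treat triples $(\rho_i, \rho_j, \rho_k)$ with $i < j$ and to distinguish the three cases $k > j$, $i < k < j$, and $k < i$; these correspond exactly to the distinguished element $\rho_k$ being the largest, the median, or the smallest of the three. In each case I must show that $X := \theta(\theta(\rho_i,\rho_j), \theta(\rho_i,\rho_k))$ and $Y := \theta(\theta(\rho_j,\rho_i), \theta(\rho_j,\rho_k))$ are defined and equal as words.

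The computation rests on two elementary consequences of the extension rules \eqref{c}--\eqref{d}, valid for the single generator $\rho_n$: first, $\theta(\rho_n^s, \rho_n^t) = \rho_n^{(t-s)^+}$, and second, a common left factor $\rho_n^s$ may be cancelled, $\theta(\rho_n^s u, \rho_n^s v) = \theta(u,v)$. Combined with \eqref{syntactic_right}, these let me evaluate every complement that occurs: recall that $\theta(\rho_i,\rho_j)$ equals $\rho_n^{qi+B(1,n-i+1)}\rho_{j-i}$ when $i < j$, but is the pure power $\rho_n^{qi+B(j-i,n-i)}$ when $i > j$, so the shape of $X$ and $Y$ is dictated by the position of $k$.

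I expect each case to reduce, after cancelling common $\rho_n$-prefixes and evaluating power complements, to an identity purely among the counting functions $B(\cdot,\cdot)$. For instance, when $k > j$ both $X$ and $Y$ come out of the form $\rho_n^{\,?}\,\rho_{k-j}$, and the equality of exponents is precisely $B(1,n-j+1) = B(1,n-j+i+1) + B(j-i,n-i)$, which is an instance of the additivity relation \eqref{sumofbij}; the cases $i<k<j$ and $k<i$ reduce similarly, the former to another instance of \eqref{sumofbij} and the latter to the vanishing of both complements. In every case the trailing generator (or its absence) matches by inspection, so that only the exponent of $\rho_n$ needs to be checked.

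The only genuine bookkeeping, and the point where I expect to have to be careful, is determining the signs inside the expressions $(t-s)^+$: that is, deciding which of the two powers of $\rho_n$ dominates, hence whether a complement is a nontrivial power or trivial. This is settled by the monotonicity of $B(i',j')$ in its arguments---enlarging the index range can only increase the count---together with $q \geq 1$, which makes the $q(j-i)$-type terms dominate the bounded differences of $B$-values. Once the signs are fixed, both $X$ and $Y$ are always defined, so the ``neither defined'' alternative of Definition~\ref{def_cube} never occurs, and the verification is complete. It is worth stressing that, in contrast with the right-cancellativity argument, this case reduction needs no input beyond \eqref{sumofbij} and this monotonicity, and in particular none of the finer defect identities of Lemma~\ref{ft}.
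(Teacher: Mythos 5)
Your proposal is correct and takes essentially the same route as the paper's proof: the same reduction to three ordered cases via symmetry in the first two entries, the same cancellation rules for $\theta$ (common left factors and comparison of powers of $\rho_n$), and the same key input — the additivity relation \eqref{sumofbij} together with nonnegativity/monotonicity of $B$ — including the exact instance $B(1,n-j+1)=B(1,n-j+i+1)+B(j-i,n-i)$ in the case $k>j$, the analogous instance in the case $i<k<j$, and the vanishing of both iterated complements when $k<i$. You are also right that, unlike the right-cancellativity argument, none of the defect identities of Lemma~\ref{ft} are needed here.
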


\begin{proof}
For $i,j$ and $\ell$ pairwise distinct, we need to show that either both $\theta( \theta(\rho_{i},\rho_{j}), \theta(\rho_{i},\rho_{\ell}))$ and $\theta( \theta(\rho_{j},\rho_{i}), \theta(\rho_{j},\rho_{\ell}))$ are defined and equal as words in $\mathcal{S}^*$, or neither is defined. It is sufficient to distinguish three cases: the case~$i<j<\ell$, the case~$i<\ell<j$, and the case~$\ell<j<i$. The three remaining cases are indeed obtained for free by swapping the roles of $i$ and $j$. Recall the value of the syntactic right-complement $\theta$ given in~\eqref{syntactic_right}. 

\begin{itemize}
\item\noindent{\bf Case $i<j<\ell$}. We have 
\begin{align*} \theta (\theta(\rho_{i}, \rho_{j}), \theta(\rho_{i}, \rho_{\ell}))&=\theta\left(\rho_n^{qi+B(1,n-i+1)} \rho_{{j-i}}, \rho_n^{qi+B(1,n-i+1)} \rho_{\ell-i}\right)=\theta(\rho_{{j-i}}, \rho_{{\ell-i}})\\&=\rho_n^{q(j-i)+B(1,n-j+i+1)} \rho_{\ell-j}, \end{align*}
where for the middle equality we used the fact that for all $a,b,c\in\mathcal{S}^*$, we have $\theta(ab,ac)=\theta(b,c)$ (which is an easy consequence of the relations~\eqref{c}-\eqref{d}). We also have   
\begin{align*} \theta (\theta(\rho_{j}, \rho_{i}), \theta(\rho_{j}, \rho_{\ell}))&=\theta\left(\rho_n^{qi+B(j-i,n-i)}, \rho_n^{qj+B(1,n-j+1)} \rho_{\ell-j}\right).
	\end{align*}
But we have $B(1,n-j+1)=B(1,n-j+i+1)+B(j-i, n-i)$ by~\eqref{sumofbij} and $B(1,n-j+i+1) \geq 0,$ yielding \begin{align*}\theta (\theta(\rho_{j}, \rho_{i}), \theta(\rho_{j}, \rho_{\ell}))&=\theta\left(1,\rho_n^{q(j-i)+B(1,n-j+i+1)} \rho_{\ell-j}\right)=\rho_n^{q(j-i)+B(1,n-j+i+1)} \rho_{\ell-j}.\end{align*}
\item\noindent{\bf Case $i<\ell<j$}. We have
\begin{align*} \theta(\theta(\rho_{i}, \rho_{j}), \theta(\rho_{i}, \rho_{\ell}))&=\theta\left(\rho_n^{qi+B(1,n-i+1)} \rho_{{j-i}},\rho_n^{qi+B(1,n-i+1)} \rho_{\ell-i}\right)=\theta(\rho_{j-i}, \rho_{{\ell-i}})\\&=\rho_n^{q(\ell-i)+B(j-\ell, n-\ell+i)}, \end{align*}
and 
\begin{align*} \theta(\theta(\rho_{j}, \rho_{i}), \theta(\rho_{j}, \rho_{\ell}))&=\theta\left( \rho_n^{qi+B(j-i,n-i)}, \rho_n^{q\ell+B(j-\ell,n-\ell)}\right).\end{align*} But thanks to~\eqref{sumofbij} we have $0\leq B(j-\ell, n-\ell+i)=B(j-\ell, n-\ell)-B(j-i, n-i)$, yielding 
\begin{align*} \theta(\theta(\rho_{j}, \rho_{i}), \theta(\rho_{j}, \rho_{\ell}))&=\theta(1, \rho_n^{q(\ell-i)+B(j-\ell, n-\ell+i)})=\rho_n^{q(\ell-i)+B(j-\ell, n-\ell+i)}. \end{align*}
\item \noindent{\bf Case $\ell<j<i$}. We have 
\begin{align*} \theta(\theta(\rho_{i}, \rho_{j}), \theta(\rho_{i}, \rho_{\ell}))=\theta(\rho_n^{qj+B(i-j,n-j)}, \rho_n^{q\ell+B(i-\ell,n-\ell)}). \end{align*} But we have $B(i-\ell, n-\ell)\leq B(i-j,n-j)$, yielding \begin{align*}  \theta(\theta(\rho_{i}, \rho_{j}), \theta(\rho_{i}, \rho_{\ell}))=\theta(\rho_n^{q(j-\ell)+B(i-j,n-j)-B(i-\ell,n-\ell)},1)=1.\end{align*} 
We also have 
\begin{align*} \theta(\theta(\rho_{j}, \rho_{i}), \theta(\rho_{j}, \rho_{\ell}))=\theta(\rho_n^{qj+B(1,n-j+1)} \rho_{{i-j}}, \rho_n^{q\ell+B(j-\ell, n-\ell)}).\end{align*} But we have $B(j-\ell, n-\ell)\leq B(1, n-j+1)$, yielding \begin{align*} \theta(\theta(\rho_{j}, \rho_{i}), \theta(\rho_{j}, \rho_{\ell}))=\theta(\rho_n^{q(j-\ell)+B(1,n-j+1)-B(j-\ell, n-\ell)} \rho_{j-i},1)=1.\end{align*}
\end{itemize}
Hence in all cases we have $\theta( \theta(\rho_{i},\rho_{j}), \theta(\rho_{i},\rho_{\ell}))=\theta( \theta(\rho_{j},\rho_{i}), \theta(\rho_{j},\rho_{\ell}))$, which concludes the proof.\end{proof}

\begin{prop}[Left-cancellativity]\label{left_cancellative}
The monoid $\mathcal{M}(n,m)$ is left-cancellative. It admits conditional right-lcms. When it exists, the right-lcm of $a$ and $b\in \mathcal{M}(n,m)$ is equal to $a\theta(a,b)=b\theta(b,a)$. 
\end{prop}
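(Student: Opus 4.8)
The plan is to deduce Proposition~\ref{left_cancellative} as a direct application of the cancellativity criterion recorded in Proposition~\ref{cancellative_criterion}. That criterion has three hypotheses to verify: that the presentation used is right-complemented, that the monoid is right-Noetherian, and that the $\theta$-cube condition holds for every triple of pairwise distinct generators. All three of these are already in hand at this point in the paper. First I would work with the presentation $\langle \mathcal{S} \ \vert \ \mathcal{R}' \rangle$ of Corollary~\ref{cor_pres_left_c} rather than the original $\langle \mathcal{S} \ \vert \ \mathcal{R} \rangle$, precisely because (as the remark after~\eqref{syntactic_right} warns) the cube condition fails for the latter but holds for the former. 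Right-complementedness of $\langle \mathcal{S} \ \vert \ \mathcal{R}' \rangle$ was observed in the discussion leading to~\eqref{syntactic_right}, where the syntactic right-complement $\theta$ is exhibited explicitly: for $1\leq i<j\leq n$ one has exactly one relation $\rho_i \cdots = \rho_j \cdots$, and no relation of the form $\rho_i\cdots=\rho_i\cdots$ nor any relation with an empty side.

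Next I would invoke Lemma~\ref{noeth_new}, which produces the additive length function $\lambda$ and thereby guarantees that divisibility in $\mathcal{M}(n,m)$ is Noetherian; in particular $\mathcal{M}(n,m)$ is right-Noetherian, which is the Noetherianity hypothesis demanded by Proposition~\ref{cancellative_criterion}. The third and genuinely substantive hypothesis, the sharp $\theta$-cube condition for every triple of pairwise distinct generators, is exactly the content of Lemma~\ref{cube_left}; since the sharp cube condition (equality as words) trivially implies the ordinary $\theta$-cube condition (equivalence under $\mathcal{R}'$), the hypothesis of Proposition~\ref{cancellative_criterion} is met.

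With all three hypotheses verified, Proposition~\ref{cancellative_criterion} applies verbatim and yields simultaneously that $\mathcal{M}(n,m)$ is left-cancellative and that it admits conditional right-lcms, together with the explicit description of the right-lcm: two elements $a,b$ admit a common right-multiple if and only if $\theta(a,b)$ is defined, in which case $a\theta(a,b)=b\theta(b,a)$ is their right-lcm. This is precisely the three-sentence conclusion of Proposition~\ref{left_cancellative}, so the proof reduces to citing Proposition~\ref{cancellative_criterion} after checking its hypotheses.

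In this packaging, there is no real obstacle remaining: the hard work has been front-loaded into Lemma~\ref{cube_left}, whose proof is the delicate case-by-case verification (the three cases $i<j<\ell$, $i<\ell<j$, $\ell<j<i$) relying on the monotonicity inequalities for the counting function $B$ coming from the additivity relation~\eqref{sumofbij}, such as $B(1,n-j+i+1)\geq 0$ and $B(j-\ell,n-\ell+i)=B(j-\ell,n-\ell)-B(j-i,n-i)\geq 0$. The proposition itself is then a formal corollary, and the only thing one must be careful about is to apply the criterion to the redundant presentation $\langle \mathcal{S} \ \vert \ \mathcal{R}' \rangle$ with its complement $\theta$ from~\eqref{syntactic_right}, rather than to the defining presentation.
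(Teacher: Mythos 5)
Your proposal is correct and follows exactly the paper's own argument: it cites Lemma~\ref{noeth_new} for right-Noetherianity and Lemma~\ref{cube_left} for the sharp $\theta$-cube condition on the redundant presentation $\langle \mathcal{S} \ \vert \ \mathcal{R}' \rangle$, then concludes by applying Proposition~\ref{cancellative_criterion}. The additional remarks you include (right-complementedness of $\mathcal{R}'$, and the fact that the sharp condition implies the ordinary one) are accurate but were left implicit in the paper.
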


\begin{proof}
The monoid $\mathcal{M}(n,m)$ is right-Noetherian (Lemma~\ref{noeth_new}) and the presentation $\langle \mathcal{S} \ \vert \ \mathcal{R}' \rangle$ satisfies the sharp $\theta$-cube condition for every triple of pairwise distinct elements of $\mathcal{S}$ (Lemma~\ref{cube_left}). We can therefore apply Proposition~\ref{cancellative_criterion} to conclude the proof.  
\end{proof}

\begin{cor}[Right-lcms of pairs of atoms]\label{cor_right_lcm}
	Let $1\leq i < j \leq n$. The right-lcm of $\rho_i$ and $\rho_j$ in $\mathcal{M}(n,m)$ is given by $$\rho_{i} \rho_n^{qi+B(1,n-i+1)} \rho_{{j-i}}=\rho_{j} \rho_n^{qi+B(j-i,n-i)}.$$
\end{cor}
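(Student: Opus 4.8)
Corollary (Right-lcms of pairs of atoms) — plan.

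The plan is to read this off directly from the left-cancellativity result, since all the real work has already been localized into Proposition~\ref{left_cancellative} and the computation of the syntactic right-complement in~\eqref{syntactic_right}. By Proposition~\ref{left_cancellative}, the monoid $\mathcal{M}(n,m)$ admits conditional right-lcms, and whenever the right-lcm of two elements $a,b$ exists it is given by $a\theta(a,b)=b\theta(b,a)$. So the only two things I must supply are: first, that $\rho_i$ and $\rho_j$ \emph{do} admit a common right-multiple (equivalently, by the ``more precisely'' clause of Proposition~\ref{cancellative_criterion}, that $\theta(\rho_i,\rho_j)$ is defined); and second, the substitution of the explicit values of $\theta$ from~\eqref{syntactic_right}.

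First I would note that for $1\leq i < j \leq n$ the presentation $\langle \mathcal{S}\mid\mathcal{R}'\rangle$ of Corollary~\ref{cor_pres_left_c} contains, by construction, a relation of the form $\rho_i\cdots=\rho_j\cdots$, namely relation~\eqref{red_first}. Hence by definition of the syntactic right-complement both $\theta(\rho_i,\rho_j)$ and $\theta(\rho_j,\rho_i)$ are defined, with the values recorded in~\eqref{syntactic_right}. By the final sentence of Proposition~\ref{cancellative_criterion}, the fact that $\theta(\rho_i,\rho_j)$ exists is exactly equivalent to $\rho_i$ and $\rho_j$ having a common right-multiple, and in that case $\rho_i\theta(\rho_i,\rho_j)=\rho_j\theta(\rho_j,\rho_i)$ represents their right-lcm. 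There is therefore no existence obstacle to check separately.

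Then I would simply substitute~\eqref{syntactic_right} into $\rho_i\theta(\rho_i,\rho_j)=\rho_j\theta(\rho_j,\rho_i)$, obtaining
\[
\rho_i\,\rho_n^{qi+B(1,n-i+1)}\,\rho_{j-i}=\rho_j\,\rho_n^{qi+B(j-i,n-i)},
\]
which is precisely the asserted formula (and is, reassuringly, the relation~\eqref{red_first} itself). This completes the proof.

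I do not expect any genuine obstacle here: the corollary is essentially a translation of Proposition~\ref{left_cancellative} specialized to generators, and the identification of the common multiple with the relation in $\mathcal{R}'$ is immediate. The only point requiring a word of care is making explicit that the \emph{existence} of the right-lcm is guaranteed — that $\rho_i,\rho_j$ really have a common right-multiple — rather than merely its shape being given conditionally; but this is handed to us by the ``if and only if'' in Proposition~\ref{cancellative_criterion} together with the definedness of $\theta(\rho_i,\rho_j)$, so no additional argument is needed.
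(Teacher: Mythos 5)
Your proposal is correct and follows essentially the same route as the paper: the paper's proof likewise reads the result off from Proposition~\ref{left_cancellative} together with the explicit values of $\theta$ in~\eqref{syntactic_right}. Your extra care in invoking the ``if and only if'' clause of Proposition~\ref{cancellative_criterion} to justify existence of the right-lcm is a point the paper leaves implicit, but it is the same argument.
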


\begin{proof} This follows immediately from the proposition above and the definition of $\theta(\rho_i, \rho_j)$ and $\theta(\rho_j, \rho_i)$ given in~\eqref{syntactic_right}. \end{proof}

\subsection{Right-cancellativity}

To show that $\mathcal{M}(n,m)$ is right-cancellative, we show that the opposite monoid $\mathcal{M}(n,m)^{\mathrm{op}}$ is left-cancellative, using the presentation $\langle \mathcal{T} \ \vert \ (\mathcal{R}'')^{\mathrm{op}}\rangle$ introduced in the paragraph after Corollary~\ref{cor_pres_right_c}. Before proving that this presentation satisfies the sharp $\eta$-cube condition, we prove some technical results on the value of the function $\eta$ on certain pairs of elements in the four following Lemmatas. We let $n,m,q,r$ be as before. Recall the value of the syntactic right-complement $\eta$ given in~\eqref{right_less} and~\eqref{right_great}. 

\begin{lemma}\label{lem_tech_left}
Let $1\leq i \leq j\leq n$. 
\begin{enumerate}
	\item $\eta(\tau_n^q \tau_i, \tau_j)=\left\{ \begin{array}{ll}
		(\tau_n^q \tau_1)^{n-j-1} & \mbox{if}~D(j)=0\text{~and~}D(i,j+1)=1,\\
		(\tau_n^q \tau_1)^{n-j} & \mbox{otherwise.}
	\end{array} 
\right.$
	\item Assume that $q\geq 2$ and $1\leq \ell < q$. Then $\eta(\tau_n^{q-\ell} \tau_i, \tau_j)=(\tau_n^q \tau_1)^{n-j}$.
		
\end{enumerate}  
\end{lemma}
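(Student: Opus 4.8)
The plan is to compute $\eta(\tau_n^{q-\ell}\tau_i,\tau_j)$ directly from the defining formulas \eqref{right_less}–\eqref{right_great} for $\eta$, using the functional equations \eqref{c}–\eqref{d} that the syntactic right-complement satisfies on $\mathcal{S}^*$ (here on $\mathcal{T}^*$). The key structural fact I would exploit is the identity in Lemma~\ref{lem_red_garside}, rewritten on the $\tau$-side: since $(\tau_n^q\tau_1)^n=\tau_n^m$ and the atoms satisfy relations reversing those of $\mathcal{R}''$, the element $\tau_n^q\tau_1$ behaves like a ``shift'' whose iterates govern the values of $\eta$. Concretely, from \eqref{right_less} one reads off that $\eta(\tau_i,\tau_j)$ is a power of $\tau_n^q\tau_1$, namely $(\tau_n^q\tau_1)^{n-j}$ or $(\tau_n^q\tau_1)^{n-j+1}$ according to whether $D(i,j)=1$ or $0$; so both parts of the lemma amount to understanding how prepending a power of $\tau_n$ to $\tau_i$ affects this complement.

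First I would treat part (1), the case $\eta(\tau_n^q\tau_i,\tau_j)$. The natural tool is the cocycle relation $\theta(bc,a)=\theta(c,\theta(b,a))$ from \eqref{c}, applied with $b=\tau_n^q\tau_i$ decomposed appropriately, or more directly by observing that $\tau_n^q\tau_i$ is the left part of a right-lcm computation among the $\tau$'s. I would use the $\tau$-version of Lemma~\ref{lem_rel_left}, which lets me rewrite $(\tau_n^q\tau_1)^{k}\tau_i$ in a normal form $\tau_{\text{(something)}}\,\tau_n^{\text{(power)}}$, to recognize when $\tau_n^q\tau_i$ does or does not divide a given power of $\tau_n^q\tau_1$, and to compute the quotient. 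The two subcases $D(j)=0$ with $D(i,j+1)=1$ versus ``otherwise'' should correspond exactly to whether one extra factor of $\tau_n^q\tau_1$ is absorbed; the defect identities in Lemma~\ref{ft} (especially the additivity \eqref{sumofbij} and the relation $D(i,i+1)=D(i)$) are what let me bookkeep the powers and land on $n-j-1$ versus $n-j$.

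Next I would handle part (2). Here the hypothesis $1\le\ell<q$ means we are removing strictly fewer than $q$ factors of $\tau_n$ from the front, so $\tau_n^{q-\ell}\tau_i$ still begins with at least one $\tau_n$; I expect the computation to collapse to $\eta(\tau_n^{q-\ell}\tau_i,\tau_j)=(\tau_n^q\tau_1)^{n-j}$ uniformly, independent of the defects, because the leftover $\tau_n$-power forces the complement into the $D=1$-type shape. I would verify this by the same cocycle manipulation, checking that $\tau_n^{q-\ell}\tau_i$ left-divides $(\tau_n^q\tau_1)^{n-j+1}\tau_j$ with quotient exactly $(\tau_n^q\tau_1)^{n-j}$, using left-cancellativity of $\mathcal{M}(n,m)^{\mathrm{op}}$ (equivalently right-cancellativity of $\mathcal{M}(n,m)$, Proposition~\ref{left_cancellative} applied on the opposite side) to pin down the quotient once existence of the common multiple is established.

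The main obstacle I anticipate is the careful case analysis in part (1): correctly identifying the single combinatorial condition ``$D(j)=0$ and $D(i,j+1)=1$'' that triggers the drop from $n-j$ to $n-j-1$, and proving it matches the divisibility behavior rather than merely asserting it. This requires threading the defect bookkeeping from Lemma~\ref{ft} through the $\eta$-formulas without sign or index errors, and confirming at the boundary $i=j$ (allowed by $i\le j$) and at $j=n$ (where $S(i,j)$ is empty and $D(i,j)=0$) that the stated values remain consistent. Everything else reduces to routine application of \eqref{c}–\eqref{d} and the $\tau$-analogue of Lemma~\ref{lem_rel_left}.
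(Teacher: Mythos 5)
Your proposal has a genuine gap, and it is the central one: circularity. To compute the complement in part (2) you propose to "pin down the quotient" using left-cancellativity of $\mathcal{M}(n,m)^{\mathrm{op}}$, which you describe as "Proposition~\ref{left_cancellative} applied on the opposite side". But Proposition~\ref{left_cancellative} gives left-cancellativity of $\mathcal{M}(n,m)$ itself; left-cancellativity of the \emph{opposite} monoid is precisely right-cancellativity of $\mathcal{M}(n,m)$, which is Proposition~\ref{right_cancellative} --- and that proposition is proved \emph{later}, by verifying the sharp $\eta$-cube condition in Appendix~\ref{sec_appendix}, a verification that rests on the very lemma you are trying to prove. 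The cancellativity you invoke is simply not available at this point of the argument, and no variant of Proposition~\ref{left_cancellative} supplies it.

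There is a second, related problem that also undermines your plan for part (1): $\eta$ is the \emph{syntactic} right-complement, a partial map on words in $\mathcal{T}^*$ obtained by extending the values \eqref{right_less}--\eqref{right_great} on generators via the recursion rules \eqref{c}--\eqref{d}, and the lemma asserts equalities of \emph{words} --- which is what the sharp cube condition (Definition~\ref{def_cube}) demands. Before the cube condition is checked, there is no identification of $\eta(a,b)$ with a quotient of a right-lcm: that identification is exactly the \emph{conclusion} of Proposition~\ref{cancellative_criterion}, not an input. So computing $\eta$ by deciding divisibility in the monoid through normal forms (your appeal to a $\tau$-analogue of Lemma~\ref{lem_rel_left} and to Lemma~\ref{lem_red_garside}, neither of which appears in the paper's proof) is not a valid route; $\eta$ must be computed by unfolding the recursion. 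The paper does this by a simultaneous decreasing induction on $j$ covering both parts at once: the rules \eqref{c}--\eqref{d} reduce $\eta(\tau_n^q\tau_i,\tau_j)$ first to $\eta(\tau_i,\tau_n^{D(j)}\tau_{j+1})$, which when $D(j)=1$ is further rewritten as $\tau_n^q\tau_1\,\eta(\tau_n^{q-1+D(i)}\tau_{i+1},\tau_{j+1})$, so that part (1) at $j$ requires part (1) or part (2) at $j+1$ according to the value of $D(i)$ (and vice versa for part (2)), with the problematic defect configurations excluded by Lemma~\ref{ft}~(1). Your proposal contains no induction and no mechanism for closing this recursion, so even setting the circularity aside, the actual computation that constitutes the proof is missing.
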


\begin{proof}
	
	We show both statements simultaneously by decreasing induction on $j$ (note that if $q=1$ there is only the first statement, but the proof below still applies as point $2$ is not used in the induction in this case). If $j=n$ then $D(j)=1$ and $\eta(\tau_n^q \tau_i, \tau_n)=\eta(\tau_n^{q-1} \tau_i, 1)=1$, hence the formula in point $1$ holds true. Similarly $\eta(\tau_n^{q-\ell}\tau_i,\tau_n)=\eta(\tau_n^{q-\ell-1}\tau_i, 1)=1$. Hence the formula in point $2$ holds true. 
	
	Hence assume that $j<n$. We begin by showing that the formula in point $1$ holds true. We have\begin{align*} \eta(\tau_n^q\tau_i, \tau_j)&=\eta(\tau_i, \eta(\tau_n^q, \tau_j))=\eta(\tau_i, \eta(\tau_n^{q-1}, \eta(\tau_n, \tau_j)))=\eta(\tau_i, \eta(\tau_n^{q-1}, \tau_n^{q-1+D(j)}\tau_{j+1}))\\
	&=\eta(\tau_i, \eta(1, \tau_n^{D(j)}\tau_{j+1}))=\eta(\tau_i, \tau_n^{D(j)}\tau_{j+1}). \end{align*}
Note that $\eta(\tau_i, \tau_{j+1})=(\tau_n^q\tau_1)^{n-j-1}$ if $D(i,j+1)=1$ while $\eta(\tau_i, \tau_{j+1})=(\tau_n^q\tau_1)^{n-j}$ if $D(i,j+1)=0$. Hence it remains to check that when $D(j)=1$, we have $\eta(\tau_i, \tau_n \tau_{j+1})=(\tau_n^q\tau_1)^{n-j}$. In this case we have $$\eta(\tau_i, \tau_n \tau_{j+1})=\eta(\tau_i, \tau_n) \eta(\eta(\tau_n, \tau_i), \tau_{j+1})=
	\tau_n^q \tau_1 \eta(\tau_n^{q-1+D(i)}\tau_{i+1}, \tau_{j+1}). $$ Assume that $D(i)=0$. By induction we have $\eta(\tau_n^{q-1}\tau_{i+1}, \tau_{j+1})=(\tau_n^q\tau_1)^{n-j-1}$ if $q\neq 1$. If $q=1$ then by Lemma~\ref{ft}~(1) as $D(i)=0$ and $D(j)=1$ we have $D(i+1,j+1)=1$, hence by~\eqref{right_less} we also get $\eta(\tau_{i+1}, \tau_{j+1})=(\tau_n^q\tau_1)^{n-j-1}$. 
	Assume now that $D(i)=1$. Also by induction, we have $\eta(\tau_n^{q}\tau_{i+1}, \tau_{j+1})=(\tau_n^q\tau_1)^{n-j-1}$ except if $D(j+1)=0$ and $D(i+1, j+2)=1$. But since $D(i)=1$, by Lemma~\ref{ft}~(1) this situation cannot appear.

Let us now prove the formula in the second statement. We have \begin{align*} \eta(\tau_n^{q-\ell}\tau_i, \tau_j)&=\eta(\tau_i, \eta(\tau_n^{q-\ell}, \tau_j))=\eta(\tau_i, \eta(\tau_n^{q-\ell-1}, \eta(\tau_n, \tau_j)))= \eta(\tau_i, \eta(\tau_n^{q-\ell-1}, \tau_n^{q-1+D(j)}\tau_{j+1}))\\ &=\eta(\tau_i, \eta(1, \tau_n^{\ell+D(j)}\tau_{j+1}))=\eta(\tau_i,  \tau_n^{\ell+D(j)}\tau_{j+1})=\eta(\tau_i,\tau_n) \eta(\eta(\tau_n, \tau_i),  \tau_n^{\ell-1+D(j)}\tau_{j+1})\\
&=\tau_n^q\tau_1 \eta(\tau_n^{q-1+D(i)}\tau_{i+1}, \tau_n^{\ell-1+D(j)}\tau_{j+1})= \tau_n^q\tau_1 \eta(\tau_n^{q-\ell+D(i)-D(j)}\tau_{i+1}, \tau_{j+1}).
 \end{align*} If $D(i)=D(j)$, then by induction we get $\eta(\tau_n^{q-\ell}\tau_{i+1},\tau_{j+1})=(\tau_n^q\tau_1)^{n-j-1}$, which concludes the proof. If $D(i)=0$ and $D(j)=1$, then we also get the result by induction if $q-\ell-1\neq 0$; if $q-\ell-1=0$ we get it by~\eqref{right_less} as $D(i+1,j+1)=D(j)=1$ by Lemma~\ref{ft}~(1). If $D(i)=1$ and $D(j)=0$, then we again get the result by induction if $q-\ell+1\neq q$; if $q-\ell+1=q$, then we also get it by induction, since if $D(j+1)=0$, we have $D(i+1,j+2)=D(j+1)=0$ by Lemma~\ref{ft}~(1).\end{proof}

\begin{lemma}\label{lem_tech_right}
	Let $1\leq i < j\leq n$. 
	\begin{enumerate}
		\item $\eta(\tau_i, \tau_n^q \tau_j)=\left\{ \begin{array}{ll} \tau_n^q\tau_1 & \mbox{if~} i+1=j\mbox{~and~}D(i)=0,\\
			(\tau_n^q\tau_1)^{n-j+2} & \mbox{if~} i+1\neq j, D(i)=0\mbox{~and~}D(i+1,j)=0,\\
			(\tau_n^q\tau_1)^{n-j+1} & \mbox{otherwise.}
		 \end{array}\right.$
	 \item Assume that $q\geq 2$ and $1\leq \ell < q$. We have $\eta(\tau_i, \tau_n^{q-\ell} \tau_j)=(\tau_n^q\tau_1)^{n-j+1}.$
	 \end{enumerate}
\end{lemma}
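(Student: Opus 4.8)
The plan is to reduce both formulas to the already-established Lemma~\ref{lem_tech_left} by peeling the second argument apart with the product rule $\eta(a,bc)=\eta(a,b)\,\eta(\eta(b,a),c)$ from~\eqref{c}--\eqref{d}. Applying it with $a=\tau_i$, $b=\tau_n^{k}$ (where $k=q$ for point~(1) and $k=q-\ell$ for point~(2)) and $c=\tau_j$ gives
\[
\eta(\tau_i,\tau_n^{k}\tau_j)=\eta(\tau_i,\tau_n^{k})\,\eta\bigl(\eta(\tau_n^{k},\tau_i),\tau_j\bigr),
\]
so everything comes down to computing the two ``power'' complements $\eta(\tau_i,\tau_n^{k})$ and $\eta(\tau_n^{k},\tau_i)$ and then feeding the second one back into Lemma~\ref{lem_tech_left}.

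First I would record some elementary identities obtained directly from~\eqref{c}--\eqref{d}: for $0\le a\le b$ one has $\eta(\tau_n^{a},\tau_n^{b})=\tau_n^{b-a}$, $\eta(\tau_n^{b},\tau_n^{a})=1$, and more generally $\eta(\tau_n^{b}\tau_c,\tau_n^{a})=1$ (a power of $\tau_n$ divides any longer power). From the base value $\eta(\tau_n,\tau_i)=\tau_n^{q-1+D(i)}\tau_{i+1}$ read off from~\eqref{right_great} (using $D(i,n)=0$), a short induction on $k$ via $\eta(\tau_n^{k},\tau_i)=\eta(\tau_n^{k-1},\eta(\tau_n,\tau_i))$ together with these identities yields
\[
\eta(\tau_n^{k},\tau_i)=\tau_n^{\,q-k+D(i)}\tau_{i+1}\qquad(1\le k\le q+D(i)),
\]
and, using $\eta(\tau_i,\tau_n)=\tau_n^{q}\tau_1$ (again $D(i,n)=0$), that $\eta(\tau_i,\tau_n^{k})=\tau_n^{q}\tau_1$ for $1\le k\le q$. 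In particular $\eta(\tau_n^{q},\tau_i)=\tau_n^{D(i)}\tau_{i+1}$ and $\eta(\tau_n^{q-\ell},\tau_i)=\tau_n^{\ell+D(i)}\tau_{i+1}$.

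Substituting these into the displayed splitting turns point~(1) into $\eta(\tau_i,\tau_n^{q}\tau_j)=\tau_n^{q}\tau_1\cdot\eta\bigl(\tau_n^{D(i)}\tau_{i+1},\tau_j\bigr)$ and point~(2) into $\eta(\tau_i,\tau_n^{q-\ell}\tau_j)=\tau_n^{q}\tau_1\cdot\eta\bigl(\tau_n^{\ell+D(i)}\tau_{i+1},\tau_j\bigr)$, and the remaining factor is now exactly of the shape handled by Lemma~\ref{lem_tech_left}. When $D(i)=0$ in point~(1) the factor is $\eta(\tau_{i+1},\tau_j)$, which equals $1$ if $i+1=j$ (giving the first listed value $\tau_n^q\tau_1$) and otherwise is read off from~\eqref{right_less} as $(\tau_n^q\tau_1)^{n-j+1}$ if $D(i+1,j)=0$ and $(\tau_n^q\tau_1)^{n-j}$ if $D(i+1,j)=1$, yielding the second and third cases respectively. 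In all the remaining situations the exponent of $\tau_n$ in front of $\tau_{i+1}$ lies in $\{1,\dots,q\}$, so Lemma~\ref{lem_tech_left}~(2) applies when it is $<q$ and Lemma~\ref{lem_tech_left}~(1) when it equals $q$; in each case the factor evaluates to $(\tau_n^{q}\tau_1)^{n-j}$, and multiplying by the leading $\tau_n^{q}\tau_1$ gives $(\tau_n^{q}\tau_1)^{n-j+1}$, which is the ``otherwise'' value in point~(1) and the asserted value in point~(2).

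The one genuinely delicate point, and the step I expect to be the main obstacle, is checking that the exceptional branch of Lemma~\ref{lem_tech_left}~(1) (the case $D(j)=0$ and $D(i+1,j+1)=1$, which would output $(\tau_n^{q}\tau_1)^{n-j-1}$ and break the count) never occurs here. It is only reached when the leading exponent equals $q$, which as noted forces $D(i)=1$; then Lemma~\ref{ft}~(1) gives $D(i+1,j+1)=D(j)$ whenever $D(i)\neq D(j)$, so $D(j)=0$ forces $D(i+1,j+1)=0\neq1$ (the case $j=n$ being immediate since $D(n)=1\neq0$). This is exactly what guarantees the uniform value $(\tau_n^{q}\tau_1)^{n-j}$ and hence the clean final answers. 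The boundary indices $i+1=j$ and $j=n$ should be verified directly against~\eqref{right_less}--\eqref{right_great}, but present no real difficulty.
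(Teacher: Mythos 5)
Your proposal is correct, and its skeleton matches the paper's: the same splitting $\eta(\tau_i,\tau_n^{k}\tau_j)=\eta(\tau_i,\tau_n^{k})\,\eta(\eta(\tau_n^{k},\tau_i),\tau_j)$, the same reduction to a factor of the form $\eta(\tau_n^{\varepsilon}\tau_{i+1},\tau_j)$, and the same use of Lemma~\ref{ft}~(1) to kill the exceptional branch. The genuine difference is structural: the paper proves Lemma~\ref{lem_tech_right} by decreasing induction on $j$ (both points simultaneously, with a separate base case $j=n$), and in point~(1) with $D(i)=1$ it unfolds one step further, writing $\eta(\tau_n\tau_{i+1},\tau_j)=\eta(\tau_{i+1},\tau_n^{q-1+D(j)}\tau_{j+1})$ and invoking the lemma's own inductive hypothesis at $j+1$, which forces a case split on $D(j)$ and the elimination of two spurious exceptional cases of the inductive statement. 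You instead never induct on the statement being proved: you evaluate $\eta(\tau_n\tau_{i+1},\tau_j)$ directly by Lemma~\ref{lem_tech_left} (point~(2) with $\ell=q-1$ when $q\geq 2$, point~(1) when $q=1$), exactly as the paper itself already does for point~(2) of the lemma. This is legitimate since Lemma~\ref{lem_tech_left} is fully established beforehand and its statement covers all exponents $1,\dots,q$ and all $j\leq n$ that you need; your preliminary power identities $\eta(\tau_i,\tau_n^{k})=\tau_n^q\tau_1$ and $\eta(\tau_n^{k},\tau_i)=\tau_n^{q-k+D(i)}\tau_{i+1}$ are easy consequences of \eqref{right_less}, \eqref{right_great} and the recursion rules, and they also absorb the paper's base case $j=n$ into the uniform argument. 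What your route buys is economy and a cleaner logical dependence (Lemma~\ref{lem_tech_right} becomes a corollary of Lemma~\ref{lem_tech_left} plus bookkeeping, with fewer sub-cases); what the paper's route buys is self-containment of the inductive scheme it uses throughout the four technical lemmas, at the cost of the extra case analysis on $D(j)$. Do make sure, as you note, to record the boundary checks ($i+1=j$ and $j=n$) explicitly, since the second case of the statement silently relies on $D(i+1,n)=0$ for the empty sequence convention.
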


	\begin{proof}
	As for Lemma~\ref{lem_tech_left}, we prove both statements simultaneously by decreasing induction on $j$. If $j=n$ then $\eta(\tau_i, \tau_n^{q+1})=\eta(\tau_i, \tau_n)\eta(\eta(\tau_n, \tau_i), \tau_n^q)$. But we have $\eta(\tau_i,\tau_n)=\tau_n^q\tau_1$ while $\eta(\tau_n, \tau_i)=\tau_n^{q-1+D(i)}\tau_{i+1}$. Hence if $D(i)=0$ we get $\eta(\tau_i, \tau_n^{q+1})=\tau_n^q\tau_1\eta(\tau_{i+1}, \tau_n)$ which yields $\tau_n^q\tau_1$ if $i+1=j=n$ and $(\tau_n^q\tau_1)^2$ otherwise. Note that $D(i+1,j)=D(i+1,n)=0$ in this last case. If $D(i)=1$ we get $\eta(\tau_i, \tau_n^{q+1})=\tau_n^q\tau_1\eta(\tau_{i+1}, 1)$ which yields $\tau_n^q \tau_1$ for all $i$. Hence the formula in the first point holds true for $j=n$. Now assume that $q\geq 2$ and $1\leq \ell < q$. We have \begin{align*}\eta(\tau_i, \tau_n^{q-\ell}\tau_n)&= \eta(\tau_i ,\tau_n)\eta(\eta(\tau_n, \tau_i), \tau_n^{q-\ell})=\tau_n^q\tau_1\eta(\tau_n^{q-1+D(i)}\tau_{i+1}, \tau_n^{q-\ell})=\tau_n^q \tau_1 \eta(\tau_n^{\ell-1+D(i)}\tau_{i+1},1)\\ &=\tau_n^q\tau_1,
	\end{align*} hence the formula in point $2$ also holds true for $j=n$. 
	
	Hence assume that $j<n$. We begin by proving the formula of the first point. We have \begin{align*}\eta(\tau_i,\tau_n^q\tau_j)&=\eta(\tau_i,\tau_n)\eta(\eta(\tau_n,\tau_i),\tau_n^{q-1}\tau_j) = \tau_n^q\tau_1 \eta(\tau_n^{q-1+D(i)}\tau_{i+1}, \tau_n^{q-1}\tau_j)=\tau_n^q\tau_1 \eta(\tau_n^{D(i)}\tau_{i+1},\tau_j).
		\end{align*} We first analyse the case $D(i)=0$. If $i+1=j$ then we get $\eta(\tau_i, \tau_n^q\tau_j)=\tau_n^q\tau_1$. Hence assume that $i+1\neq j$. By~\eqref{right_less} and the above computation we obtain $\eta(\tau_i,\tau_n^q\tau_j)=\tau_n^q\tau_1\eta(\tau_{i+1},\tau_j)=(\tau_n^q\tau_1)^{n-j+2}$ if $D(i+1,j)=0$ while $\eta(\tau_i,\tau_n^q\tau_j)=(\tau_n^q\tau_1)^{n-j+1}$ if $D(i+1,j)=1$, as expected. We now analyze the case $D(i)=1$. We calculate $$\eta(\tau_i, \tau_n^q\tau_j)=\tau_n^q\tau_1 \eta(\tau_n \tau_{i+1},\tau_j)=\tau_n^q\tau_1 \eta(\tau_{i+1}, \eta(\tau_n,\tau_j))=\tau_n^q\tau_1 \eta(\tau_{i+1},\tau_n^{q-1+D(j)}\tau_{j+1}).$$ If $D(j)=0$, then by induction we have using the second point that $\eta(\tau_{i+1}, \tau_n^{q-1} \tau_{j+1})=(\tau_n^q\tau_1)^{n-j}$ except possibly if $q=1$. If $q=1$, then as by Lemma~\ref{ft}~(1) we have $D(i+1,j+1)=0$, by~\eqref{right_less} we get $\eta(\tau_{i+1}, \tau_n^{q-1} \tau_{j+1})=(\tau_n^q\tau_1)^{n-j}$. 
 If $D(j)=1$, then by induction we have by the first point that $\eta(\tau_{i+1}, \tau_n^q\tau_{j+1})=(\tau_n^q\tau_1)^{n-j}$ except in two cases: if $(i+2=j+1, D(i+1)=0)$ or if $(i+2\neq j+1, D(i+1)=0, D(i+2,j+1)=0)$. The first case cannot happen since we get $j=i+1$ and $D(j)=1$ while $D(i+1)=0$, a contradiction. The second case also yields a contradiction: we have $D(j)=1$, $D(i+1)=0$, $D(i+2,j+1)=0$, which by Lemma~\ref{ft}~(1) is impossible. Hence the first formula holds true. 

We now prove the second one. We have \begin{align*} \eta(\tau_i, \tau_n^{q-\ell}\tau_j)&=\eta(\tau_i, \tau_n)\eta(\eta(\tau_n,\tau_i),\tau_n^{q-\ell-1}\tau_j)=\tau_n^q\tau_1 \eta(\tau_n^{q-1+D(i)}\tau_{i+1}, \tau_n^{q-\ell-1}\tau_j) = \tau_n^q\tau_1 \eta(\tau_n^{\ell+D(i)}\tau_{i+1}, \tau_j). 
\end{align*} If $D(i)=0$, by Lemma~\ref{lem_tech_left}~(2), since $1\leq \ell < q$, we have $\eta(\tau_n^{\ell}\tau_{i+1},\tau_j)=(\tau_n^q\tau_1)^{n-j}$. If $D(i)=1$, then if $\ell+1\neq q$, we get as well that $\eta(\tau_n^{\ell+1}\tau_{i+1},\tau_j)=(\tau_n^q\tau_1)^{n-j}$ by the same Lemma. In the case where $\ell+1=q$, we also have $\eta(\tau_n^{\ell+1}\tau_{i+1},\tau_j)=(\tau_n^q\tau_1)^{n-j}$ by Lemma~\ref{lem_tech_left} except possibly in the case where $D(j)=0$ and $D(i+1,j+1)=1$. But as $D(i)=1$, this case is impossible by Lemma~\ref{ft}~(1). Hence we get $\eta(\tau_i, \tau_n^{q-\ell}\tau_j)=(\tau_n^q\tau_1)^{n-j+1}$ in all cases. This concludes the proof.  
\end{proof}

\begin{lemma}\label{tech_tn_3}
Let $1\leq i < j \leq n$.
\begin{enumerate}
	\item 	$\eta(\tau_n^q\tau_j, \tau_i)=\left\{ \begin{array}{ll} (\tau_n^q\tau_1)^{n-j}\tau_n^{D(i+1,j)} \tau_{i+n-j+1} & \mbox{if~} D(i)=1,\\
		\eta(\tau_j, \tau_{i+1})  & \mbox{if~} D(i)=0.
	\end{array}\right.$
	\item Assume that $q\geq 2$ and $1\leq \ell < q$. We have $$\eta(\tau_n^{q-\ell} \tau_j, \tau_i)=(\tau_n^q\tau_1)^{n-j} \tau_n^{\ell-1+D(i+1,j)+D(i)} \tau_{i+n-j+1}.$$
\end{enumerate}

\end{lemma}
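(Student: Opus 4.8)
The plan is to prove both statements simultaneously by decreasing induction on $j$, with base case $j=n$, exactly as in the proofs of Lemmas~\ref{lem_tech_left} and~\ref{lem_tech_right}. The two engines are the defining relations of the syntactic right-complement (the analogues for $\eta$ of~\eqref{c}--\eqref{d}), namely $\eta(bc,a)=\eta(c,\eta(b,a))$ and $\eta(a,bc)=\eta(a,b)\eta(\eta(b,a),c)$, together with the two values $\eta(\tau_i,\tau_n)=\tau_n^q\tau_1$ and $\eta(\tau_n,\tau_i)=\tau_n^{q-1+D(i)}\tau_{i+1}$ for $i<n$ read off from~\eqref{right_less}--\eqref{right_great}. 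Iterating the second of these and absorbing a common prefix via $\eta(w,w)=1$ yields the preliminary formula $\eta(\tau_n^{s},\tau_i)=\tau_n^{q-s+D(i)}\tau_{i+1}$ for $1\leq s\leq q$ (equivalently $\eta(\tau_n^{q-\ell},\tau_i)=\tau_n^{\ell+D(i)}\tau_{i+1}$ for $0\leq\ell\leq q-1$); a similar telescoping, using $\eta(\tau_n^{a},\tau_n^{b})=\tau_n^{\max(0,b-a)}$, gives $\eta(\tau_j,\tau_n^{s})=\tau_n^q\tau_1$ for all $1\leq s\leq q$ and $j<n$. The first formula with $\ell=0$ will feed point~(1), and the general case will feed point~(2).

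The heart of the inductive step is a single uniform reduction. Writing $s:=\ell+D(i)$ (with $\ell=0$ for point~(1)), I would first move the power of $\tau_n$ into the second slot and then peel it off the front of the second argument:
\[
\eta(\tau_n^{q-\ell}\tau_j,\tau_i)=\eta\!\left(\tau_j,\tau_n^{s}\tau_{i+1}\right)=\tau_n^q\tau_1\cdot\eta\!\left(\tau_n^{\,q-\ell-D(i)+D(j)}\tau_{j+1},\tau_{i+1}\right),
\]
where the first equality uses $\eta(bc,a)=\eta(c,\eta(b,a))$ and the preliminary value $\eta(\tau_n^{q-\ell},\tau_i)=\tau_n^{s}\tau_{i+1}$, and the second uses $\eta(a,bc)=\eta(a,b)\eta(\eta(b,a),c)$ together with $\eta(\tau_j,\tau_n^{s})=\tau_n^q\tau_1$ and $\eta(\tau_n^{s},\tau_j)=\tau_n^{q-s+D(j)}\tau_{j+1}$. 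Setting $P:=q-\ell-D(i)+D(j)$, one checks $0\leq P\leq q$, so the remaining factor $\eta(\tau_n^{P}\tau_{j+1},\tau_{i+1})$ has first index $j+1$ and is again of the shape covered by Lemma~\ref{tech_tn_3} at $j+1$: if $P=q$ it is handled by point~(1), if $1\leq P\leq q-1$ by point~(2), and if $P=0$ it is $\eta(\tau_{j+1},\tau_{i+1})$, evaluated directly by~\eqref{right_great}. (When $s=0$, i.e. in point~(1) with $D(i)=0$, no peeling occurs and one obtains $\eta(\tau_j,\tau_{i+1})$, which is exactly the stated value.)

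What remains is to match exponents and indices. The outer factor $\tau_n^q\tau_1$ prepended to $(\tau_n^q\tau_1)^{n-j-1}$ supplies the desired $(\tau_n^q\tau_1)^{n-j}$, and the $\tau$-index produced, $(i+1)+n-(j+1)+1=i+n-j+1$, is exactly the one claimed. The nontrivial reconciliation is between the nested defect $D(i+2,j+1)$ coming out of the recursion and the target defect $D(i+1,j)$: in the generic case this is \emph{precisely} the identity of Lemma~\ref{ft}~(3), $D(i+2,j+1)+D(i+1)=D(i+1,j)+D(j)$. The boundary evaluations at $P=0$ use~\eqref{right_great} and are settled by Lemma~\ref{ft}~(1) (controlling the $q=1$ case) and Lemma~\ref{ft}~(6), $D(n-j+i)=D(i+1,j)$ when $D(i)=1,D(j)=0$. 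Finally, several a priori possible branches (for instance $D(i+1)=0$ together with $D(i+2,j+1)=0$ under $D(j)=1$) never occur, since Lemma~\ref{ft}~(3) would then force a defect equal to $-1$, impossible by Lemma~\ref{lem_01}.

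I expect the main obstacle to be exactly this exponent bookkeeping, and specifically the boundary values where the power of $\tau_n$ in the second argument hits $0$ or $q$: there one must abandon the inductive formula and evaluate directly via~\eqref{right_less}--\eqref{right_great} or via Lemmas~\ref{lem_tech_left} and~\ref{lem_tech_right}, verifying in each instance that the resulting word coincides with the claimed one. This is the same source of case-proliferation as in the two preceding lemmas, and point~(2) runs entirely parallel to point~(1), the only difference being the single extra factor $\tau_n^{\ell-1}$ that is carried through the computation and which, combined with the $D(i)$ and $D(i+1,j)$ terms produced by the reduction, accounts for the exponent $\ell-1+D(i+1,j)+D(i)$ appearing in the statement.
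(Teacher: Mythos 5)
Your proposal is correct and follows essentially the same route as the paper's proof: a simultaneous decreasing induction on $j$ with base case $j=n$, the same core reduction $\eta(\tau_n^{q-\ell}\tau_j,\tau_i)=\eta(\tau_j,\tau_n^{\ell+D(i)}\tau_{i+1})=\tau_n^q\tau_1\,\eta(\tau_n^{q-\ell-D(i)+D(j)}\tau_{j+1},\tau_{i+1})$, the same exponent reconciliation via Lemma~\ref{ft}~(3), and the same boundary cases (your $P=0$ and $P=q$ are exactly the paper's two ``particular cases''), settled by~\eqref{right_great} together with Lemma~\ref{ft}~(1) and~(6). The only cosmetic differences are that you unify the two points through the parameter $s=\ell+D(i)$ and precompute closed forms for $\eta(\tau_n^s,\tau_i)$ and $\eta(\tau_j,\tau_n^s)$, where the paper peels off one $\tau_n$ at a time and treats the two points in parallel.
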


\begin{proof}

	We again argue by decreasing induction on $j$, proving both statements simultaneously. For $j=n$ we have $\eta(\tau_n^q \tau_n, \tau_i)=\eta(\tau_n^q, \eta(\tau_n, \tau_i))=\eta(\tau_n^q, \tau_n^{q-1+D(i)}\tau_{i+1}),$ yielding $\tau_{i+1}$ if $D(i)=1$ and $\eta(\tau_n, \tau_{i+1})$ if $D(i)=0$. Since we have $D(i+1,n)=0$, the first formula holds true for $j=n$. A similar calculation yields $\eta(\tau_n^{q-\ell}\tau_n, \tau_i)=\tau_n^{\ell-1+D(i)}\tau_{i+1}$. Since $D(i+1,n)=0$ we also get that the second formula holds true for $j=n$. 

Now assume that $j<n$. We begin by showing the formula of the first point. We have 
\begin{align*}
	\eta(\tau_n^q\tau_j, \tau_i)=\eta(\tau_n^{q-1}\tau_j, \eta(\tau_n,\tau_i))=\eta(\tau_n^{q-1}\tau_j,\tau_n^{q-1+D(i)}\tau_{i+1})=\eta(\tau_j,\tau_n^{D(i)}\tau_{i+1})
\end{align*} 
We obtain the claimed formula if $D(i)=0$. Hence assume that $D(i)=1$. In this case we have \begin{align}\label{inter_1}
	\eta(\tau_n^q\tau_j, \tau_i)=\eta(\tau_j, \tau_n \tau_{i+1})=\eta(\tau_j, \tau_n) \eta(\eta(\tau_n, \tau_j), \tau_{i+1})=\tau_n^q \tau_1 \eta(\tau_n^{q-1+D(j)}\tau_{j+1},\tau_{i+1})
\end{align}
If $D(j)=0$ then by induction we have $\eta(\tau_n^{q-1}\tau_{j+1},\tau_{i+1})=(\tau_n^q\tau_1)^{n-j-1} \tau_n^{D(i+2,j+1)+D(i+1)} \tau_{i+n-j+1}$ if $q\neq 1$.  Now by Lemma~\ref{ft}~(3) we have \begin{eqnarray}\label{dij} D(i+2, j+1)+D(i+1)=D(i+1,j)+D(j),
\end{eqnarray}
yielding the expected formula in this case. If $q=1$ then we have $\eta(\tau_n^q\tau_j, \tau_i))=(\tau_n^q\tau_1)\eta(\tau_{j+1}, \tau_{i+1})=(\tau_n^q\tau_1)(\tau_n^q\tau_1)^{n-j-1} \tau_n^{D(n-j+i)}\tau_{n-j+i+1}$. But by Lemma~\ref{ft}~(6), we have $D(n-j+i)=D(i+1,j)$, again yielding the expected formula. 

If $D(j)=1$ then by induction we get that $\eta(\tau_n^{q}\tau_{j+1},\tau_{i+1})=(\tau_n^q\tau_1)^{n-j-1}\tau_n^{D(i+2,j+1)} \tau_{i+n-j+1} $ if $D(i+1)=1$. This is also equal to $(\tau_n^q\tau_1)^{n-j-1}\tau_n^{D(i+1,j)} \tau_{i+n-j+1} $ by~\eqref{dij} since $D(i+1)=D(j)=1$, hence from~\eqref{inter_1} we get the claimed formula. In the remaining case, we have $D(i)=1=D(j)$ and $D(i+1)=0$. Hence by Lemma~\ref{ft}~(1) we get $D(i+2,j+1)=1$ (hence in particular $i+2 < j+1$). This yields by induction and~\eqref{right_great} that $\eta(\tau_n^q \tau_{j+1}, \tau_{i+1})=\eta(\tau_{j+1},\tau_{i+2})=(\tau_n^q\tau_1)^{n-j-1} \tau_{n-j+i+1}$ which again concludes the proof with~\eqref{inter_1} as~\eqref{dij} yields $D(i+1,j)=0$.

We now show the formula in the second point. We have      
\begin{align*}
	\eta(\tau_n^{q-\ell}\tau_j, \tau_i)&=\eta(\tau_n^{q-\ell-1}\tau_j, \eta(\tau_n,\tau_i))=\eta(\tau_n^{q-\ell-1}\tau_j,\tau_n^{q-1+D(i)}\tau_{i+1})=\eta(\tau_j,\tau_n^{\ell+D(i)}\tau_{i+1})\\ &=\eta(\tau_j,\tau_n)\eta(\eta(\tau_n,\tau_j),\tau_n^{\ell-1+D(i)}\tau_{i+1})=\tau_n^q \tau_1\eta(\tau_n^{q-1+D(j)},\tau_n^{\ell-1+D(i)}\tau_{i+1})\\ &=\tau_n^q\tau_1\eta(\tau_n^{q-\ell+D(j)-D(i)}\tau_{j+1},\tau_{i+1})
\end{align*}
Except possibly for two particular cases which we will check afterwards, namely the cases $(\ell=q-1, D(j)=0, D(i)=1)$ and $(\ell=1, D(j)=1, D(i)=0$), by induction we get
\begin{align*}
	\eta(\tau_n^{q-\ell}\tau_j, \tau_i) &=\tau_n^q\tau_1\eta(\tau_n^{q-\ell+D(j)-D(i)}\tau_{j+1},\tau_{i+1})=
		(\tau_n^q\tau_1)^{n-j}\tau_n^{\ell-1+D(i)-D(j)+D(i+2,j+1)+D(i+1)}\tau_{i+n-j+1} \\ &=(\tau_n^q\tau_1)^{n-j}\tau_n^{\ell-1+D(i+1,j)+D(i)}\tau_{i+n-j+1},
\end{align*}
where the last equality follows from~\eqref{dij}. We thus get the claim in this case. We now check the aforementioned two remaining cases. 

If $D(i)=1$, $D(j)=0$ and $\ell=q-1$ we have $D(i+1,j+1)=0$ by Lemma~\ref{ft}~(1) and hence using~\eqref{right_great} we get \begin{align*}\eta(\tau_n^{q-\ell}\tau_j, \tau_i)&=\tau_n^q\tau_1 \eta(\tau_{j+1},\tau_{i+1})=(\tau_n^q \tau_1)^{n-j} \tau_n^{q-1+D(n-j+i)}\tau_{n-j+i+1}\\ &=(\tau_n^q \tau_1)^{n-j} \tau_n^{\ell-1+D(n-j+i)+D(i)}\tau_{n-j+i+1}.\end{align*} It remains to check that $D(n-j+i)=D(i+1,j)$, which we get by Lemma~\ref{ft}~(6). 

If $D(i)=0, D(j)=1$ and $\ell=1$, by induction we have $$\eta(\tau_n^{q-\ell}\tau_j, \tau_i)=\tau_n^q\tau_1 \eta(\tau_n^q\tau_{j+1},\tau_{i+1})=\left\{ \begin{array}{ll}  \tau_n^q\tau_1\eta(\tau_{j+1},\tau_{i+2}) & \mbox{if~} D(i+1)=0,\\
	(\tau_n^q \tau_1)^{n-j}\tau_n^{D(i+2,j+1)}\tau_{i+n-j+1} & \mbox{if~} D(i+1)=1.
\end{array}\right.$$ In the first situation by Lemma~\ref{ft}~(1) we get $D(i+2,j+1)=1$ (in particular $i+2<j+1$). We thus get $\eta(\tau_{j+1},\tau_{i+2})=(\tau_n^q\tau_1)^{n-j-1} \tau_{n-j+i+1}$ which concludes the proof since by~\eqref{dij} we have $$D(i+1,j)+D(i)=D(i+2,j+1)+D(i+1)-D(j)+D(i)=0,$$ while in the second situation the above equality yields $D(i+1,j)+D(i)=D(i+2,j+1)$ which also concludes the proof.
\end{proof}

\begin{lemma}\label{last_tech}
	Let $1\leq i \leq j \leq n$. Then we have
	\begin{enumerate}
		\item $\eta(\tau_j, \tau_n^q \tau_i)=\left\{ \begin{array}{ll} \tau_n^q\tau_1\eta(\tau_{j+1}, \tau_i) & \mbox{if~} D(j)=0,\\
			(\tau_n^q\tau_1)^{n-j} \tau_n^{q-1+D(i+1,j+1)+D(i)-D(j)}\tau_{i+n-j} & \mbox{if~} D(j)=1.
		\end{array}\right.$
	\item  Assume that $q\geq 2$ and $1\leq \ell < q$. We have $$\eta(\tau_j, \tau_n^{q-\ell}\tau_i)=(\tau_n^q\tau_1)^{n-j} \tau_n^{q-\ell-D(j)-1+D(i+1,j+1)+D(i)} \tau_{i+n-j}.$$
	\end{enumerate}
\end{lemma}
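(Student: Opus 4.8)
The plan is to deduce both points directly from Lemma~\ref{tech_tn_3}, which already evaluates $\eta(\tau_n^{t}\tau_{j},\tau_{i})$ when $i<j$, rather than to set up a fresh induction; the only real content is then the defect bookkeeping. First I would dispose of the base value $j=n$ by hand: since $\eta(\tau_n,\tau_n^{t}\tau_i)=\eta(1,\tau_n^{t-1}\tau_i)=\tau_n^{t-1}\tau_i$ by the cancellation rule $\eta(uv,uw)=\eta(v,w)$ and~\eqref{d}, both formulas at $j=n$ follow once one checks, using $D(n)=1$ together with the identity $D(i+1,n+1)+D(i)=1$, that the claimed exponents collapse to $t-1$. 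That identity is a one-line consequence of~\eqref{dij_alt_def} and~\eqref{def_defect}, namely $nD(i+1,n+1)=n+k_{i+1}-k_1-k_i=n(1-D(i))$, using $k_{n+1}=k_1=r$.

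For $j<n$ the main step is a single peeling of the leftmost $\tau_n$ off the right-hand argument. Writing that argument as $\tau_n\cdot\tau_n^{q-s-1}\tau_i$ (with $s=0$ for point~(1) and $s=\ell$ for point~(2), where $q\geq 2$, $1\leq\ell<q$) and applying the complement identity $\eta(a,bc)=\eta(a,b)\,\eta(\eta(b,a),c)$ with $a=\tau_j$, $b=\tau_n$, I obtain
$$\eta(\tau_j,\tau_n^{q-s}\tau_i)=\tau_n^q\tau_1\,\eta\big(\tau_n^{q-1+D(j)}\tau_{j+1},\,\tau_n^{q-s-1}\tau_i\big),$$
where I have used $\eta(\tau_j,\tau_n)=\tau_n^q\tau_1$ and $\eta(\tau_n,\tau_j)=\tau_n^{q-1+D(j)}\tau_{j+1}$, both instances of~\eqref{right_less}--\eqref{right_great} since $D(j,n)=0$. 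Cancelling the common left-factor $\tau_n^{q-s-1}$ (again by $\eta(uv,uw)=\eta(v,w)$) turns the inner complement into $\eta(\tau_n^{s+D(j)}\tau_{j+1},\tau_i)$, whose $\tau_n$-exponent $s+D(j)$ lies in $\{0,1,\dots,q\}$.

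I would then split on this exponent. If $s+D(j)=0$ (that is $s=0$, $D(j)=0$), the inner complement is simply $\eta(\tau_{j+1},\tau_i)$ and the first branch of~(1) drops out at once. Otherwise $s+D(j)\in\{1,\dots,q\}$, and the inner complement is exactly of the form treated in Lemma~\ref{tech_tn_3}, applied with its parameters $(j,i)$ replaced by $(j+1,i)$ and its $\ell$ replaced by $q-s-D(j)$: one invokes point~(2) when $s+D(j)<q$ (then automatically $q\geq 2$) and point~(1) when $s+D(j)=q$. In the generic case $s+D(j)<q$, the $\tau_n$-exponent returned by Lemma~\ref{tech_tn_3}(2) is precisely $(q-s-D(j))-1+D(i+1,j+1)+D(i)$, which, after absorbing the leading $\tau_n^q\tau_1$ into $(\tau_n^q\tau_1)^{n-j-1}$ to form $(\tau_n^q\tau_1)^{n-j}$, matches the claimed exponent verbatim; no further manipulation is needed.

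The only delicate case is $s+D(j)=q$, which (as $s\leq q-1$) forces $D(j)=1$ and $s=q-1$, so that Lemma~\ref{tech_tn_3}(1) must be used. When $D(i)=1$ its output matches the claim directly. When $D(i)=0$, however, Lemma~\ref{tech_tn_3}(1) returns $\eta(\tau_{j+1},\tau_{i+1})$, which I must evaluate through~\eqref{right_great}; and here the claimed exponent equals $D(i+1,j+1)-1$, which would be \emph{negative} unless $D(i+1,j+1)=1$. This is exactly what Lemma~\ref{ft}(1) supplies: since $D(i)=0\neq 1=D(j)$ (in particular $i\neq j$, so $i<j<n$ and its hypotheses hold), one gets $D(i+1,j+1)=D(j)=1$, which simultaneously makes the exponent vanish and selects the branch of~\eqref{right_great} producing the generator $\tau_{i+n-j}$ demanded by the formula. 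I therefore expect Lemma~\ref{ft}(1) to be the crux of the argument, both guaranteeing well-definedness of the stated exponent and closing the one genuinely tight case.
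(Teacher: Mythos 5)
Your proposal is correct and follows essentially the same route as the paper: peel one $\tau_n$ off the right-hand argument via $\eta(a,bc)=\eta(a,b)\,\eta(\eta(b,a),c)$, cancel the common power of $\tau_n$, and evaluate the resulting $\eta(\tau_n^{s+D(j)}\tau_{j+1},\tau_i)$ by Lemma~\ref{tech_tn_3}, with Lemma~\ref{ft}~(1) resolving the tight case $D(j)=1$, $s=q-1$, $D(i)=0$ exactly as the paper does. Your observation that no fresh induction is needed is accurate: although the paper phrases its proof as a decreasing induction on $j$, its step for $j<n$ only ever invokes Lemma~\ref{tech_tn_3} (never the inductive hypothesis), so the two arguments are computationally identical.
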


\begin{proof}
We again argue by decreasing induction on $j$. We begin by proving both points for $j=n$. For the first point, if $j=n$ then $D(j)=1$ and $\eta(\tau_n, \tau_n^q \tau_i)=\tau_n^{q-1}\tau_i$, hence the claimed formula holds true since an immediate calculation shows that $D(i+1,n+1)+D(i)=1$ (recall that $k_{n+1}=k_1=r$). For the second point, if $j=n$ we have $\eta(\tau_n, \tau_n^{q-\ell} \tau_i)=\tau_n^{q-\ell-1} \tau_i$ while as above $D(i+1,n+1)+D(i)-D(j)=0$.

Hence assume that $j\neq n$. We prove the first point. We have \begin{align*}
	\eta(\tau_j, \tau_n^q\tau_i)&=\eta(\tau_j, \tau_n)\eta(\eta(\tau_n,\tau_j), \tau_n^{q-1} \tau_i)=\tau_n^q\tau_1 \eta(\tau_n^{q-1+D(j)}\tau_{j+1}, \tau_n^{q-1}\tau_i)=\tau_n^q\tau_1 \eta(\tau_n^{D(j)}\tau_{j+1}, \tau_i).
\end{align*}
If $D(j)=0$ we get the claimed formula. If $D(j)=1$ we also get the claimed formula if $q\neq 1$ by applying Lemma~\ref{tech_tn_3}~(2) with $\ell=q-1$. If $q=1$ we use Lemma~\ref{tech_tn_3}~(1), which yields \begin{align*}
	\tau_n^q\tau_1 \eta(\tau_n\tau_{j+1}, \tau_i)=\left\{ \begin{array}{ll} \tau_n^q\tau_1(\tau_n^q\tau_1)^{n-j-1} \tau_n^{D(i+1,j+1)} \tau_{i+n-j} & \mbox{if~} D(i)=1,\\
		\tau_n^q\tau_1 \eta(\tau_{i+1}, \tau_{j+1}) & \mbox{if~} D(i)=0.
	\end{array}\right.
\end{align*}
In the first case we get the claimed formula. In the second case, since $D(j)=1$ and $D(i)=0$ (hence $i\neq j$), by Lemma~\ref{ft}~(1) we have $D(i+1,j+1)=1$, hence~\eqref{right_great} yields $\eta(\tau_{i+1},\tau_{j+1})=(\tau_n^q\tau_1)^{n-j-1} \tau_{n-j+i}$, from what we also obtain the expected formula. 

For the second point we have 
\begin{align*}
	\eta(\tau_j, \tau_n^{q-\ell}\tau_i)&=\eta(\tau_j, \tau_n)\eta(\eta(\tau_n,\tau_j), \tau_n^{q-\ell-1} \tau_i)=\tau_n^q\tau_1 \eta(\tau_n^{q-1+D(j)}\tau_{j+1}, \tau_n^{q-\ell-1}\tau_i)=\tau_n^q\tau_1 \eta(\tau_n^{D(j)+\ell} \tau_{j+1}, \tau_i)\\ &=\left\{ \begin{array}{ll}
		\tau_n^q\tau_1 \eta(\tau_{j+1}, \tau_{i+1}) & \mbox{if~} \ell=q-1, D(j)=1,D(i)=0\\  
		(\tau_n^q\tau_1)^{n-j} \tau_n^{q-\ell-D(j)-1+D(i+1,j+1)+D(i)} \tau_{i+n-j} & \mbox{otherwise}.\\
	\end{array}\right.,
\end{align*}
where the last equality is obtained by applying Lemma~\ref{tech_tn_3} again. In the first case, as $D(j)=1$ and $D(i)=0$ (hence $i\neq j$) by Lemma~\ref{ft}~(1) we get that $D(i+1,j+1)=1$, yielding by~\ref{right_great} $$\tau_n^q \tau_1 \eta(\tau_{j+1}, \tau_{i+1})=(\tau_n^q \tau_1)^{n-j} \tau_{n-j+i},$$ and $q-\ell-D(j)-1+D(i+1,j+1)+D(i)=0$ in this case, hence we get  the claimed formula in all cases. 
\end{proof}

The verification of the sharp $\eta$-cube condition is technical and involves distinguishing a lot of cases using the above four Lemmatas; we therefore threat it in Appendix~\ref{sec_appendix}. 

\begin{prop}[Right-cancellativity]\label{right_cancellative}
	The monoid $\mathcal{M}(n,m)^{\mathrm{op}}$ is left-cancellative and admits conditional right-lcms. Equivalently, the monoid $\mathcal{M}(n,m)$ is right-cancellative and admits conditional left-lcms.  
\end{prop}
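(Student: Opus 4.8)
The plan is to mirror the proof of left-cancellativity (Proposition~\ref{left_cancellative}) by applying the cancellativity criterion of Proposition~\ref{cancellative_criterion}, but now to the opposite monoid $\mathcal{M}(n,m)^{\mathrm{op}}$. Recall from Corollary~\ref{cor_pres_right_c} and the discussion following it that $\mathcal{M}(n,m)^{\mathrm{op}}$ is defined by the right-complemented presentation $\langle \mathcal{T} \mid (\mathcal{R}'')^{\mathrm{op}}\rangle$, whose syntactic right-complement $\eta$ is given on generators by~\eqref{right_less} and~\eqref{right_great}. To invoke Proposition~\ref{cancellative_criterion} it remains to check two hypotheses: that $\mathcal{M}(n,m)^{\mathrm{op}}$ is right-Noetherian, and that the sharp $\eta$-cube condition holds for every triple of pairwise distinct generators.

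Right-Noetherianity is immediate: by Lemma~\ref{noeth_new} the divisibility of $\mathcal{M}(n,m)$ is Noetherian, so $\mathcal{M}(n,m)$ is simultaneously left- and right-Noetherian; since left-divisibility in $\mathcal{M}(n,m)^{\mathrm{op}}$ is right-divisibility in $\mathcal{M}(n,m)$, the monoid $\mathcal{M}(n,m)^{\mathrm{op}}$ is right-Noetherian. Once the cube condition is established, Proposition~\ref{cancellative_criterion} yields that $\mathcal{M}(n,m)^{\mathrm{op}}$ is left-cancellative and admits conditional right-lcms; since left-cancellativity of $\mathcal{M}(n,m)^{\mathrm{op}}$ is equivalent to right-cancellativity of $\mathcal{M}(n,m)$, and a right-lcm in $\mathcal{M}(n,m)^{\mathrm{op}}$ is a left-lcm in $\mathcal{M}(n,m)$, this gives exactly the equivalent formulation in the statement.

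The crux is therefore the sharp $\eta$-cube condition: for each triple $(\tau_i,\tau_j,\tau_k)$ of pairwise distinct generators one must show that $\eta(\eta(\tau_i,\tau_j),\eta(\tau_i,\tau_k))$ and $\eta(\eta(\tau_j,\tau_i),\eta(\tau_j,\tau_k))$ are either both undefined or defined and equal as words. Unlike the left-cancellativity case, the inner complements $\eta(\tau_a,\tau_b)$ are not single-letter-tailed powers of $\tau_n$ but words of the shapes $(\tau_n^q\tau_1)^{\ast}$, $(\tau_n^q\tau_1)^{\ast}\tau_{\ast}$ and $(\tau_n^q\tau_1)^{\ast}\tau_n^{\ast}\tau_{\ast}$; hence the outer $\eta$ is applied to pairs in which one or both arguments begin with a power of $\tau_n$ followed by a single generator. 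My first step would be to compute closed formulas for $\eta$ on exactly such pairs---this is the role of the four technical Lemmatas~\ref{lem_tech_left},~\ref{lem_tech_right},~\ref{tech_tn_3} and~\ref{last_tech}---each proved by decreasing induction on the relevant index, repeatedly unfolding $\eta$ through the functional identities~\eqref{c}--\eqref{d} and the base values~\eqref{right_less},~\eqref{right_great}. With these formulas in hand, I would organize the cube verification according to the relative order of $i,j,k$ and to the good/bad status (defect $0$ or $1$) of the relevant indices, and reconcile the two sides using the elementary defect identities collected in Lemma~\ref{ft}.

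I expect the cube condition to be the main obstacle by a wide margin, and the reason is structural: the relations with defect $1$ (the ``bad'' indices, absent in the $(n,n+1)$-case) make the formulas for $\eta$ genuinely case-dependent, so both iterated complements split into many subcases, and the two resulting words coincide only after repeated appeals to the identities of Lemma~\ref{ft} (for instance the exponent-matching identity of Lemma~\ref{ft}~(3) and the index-shift identities of Lemma~\ref{ft}~(1),(6)). Because of the combinatorial proliferation of cases, I would carry out the full verification separately in Appendix~\ref{sec_appendix}, and here only cite its conclusion together with right-Noetherianity to close the proof via Proposition~\ref{cancellative_criterion}.
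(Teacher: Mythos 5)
Your proposal is correct and follows exactly the paper's own route: right-Noetherianity of $\mathcal{M}(n,m)^{\mathrm{op}}$ via Lemma~\ref{noeth_new}, then Proposition~\ref{cancellative_criterion} applied to the right-complemented presentation $\langle \mathcal{T} \mid (\mathcal{R}'')^{\mathrm{op}}\rangle$, with the sharp $\eta$-cube condition verified case by case in Appendix~\ref{sec_appendix} (Lemmatas~\ref{right_cube_1}, \ref{right_cube_2}, \ref{right_cube_3}) using the four technical Lemmatas~\ref{lem_tech_left}--\ref{last_tech} and the defect identities of Lemma~\ref{ft}. You have also correctly identified the structural reason the cube condition is the hard part, namely the case-dependence introduced by the bad indices.
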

\begin{proof}
	Since $\mathcal{M}(n,m)^{\mathrm{op}}$ is right-Noetherian (because $\mathcal{M}(n,m)$ is left-Noetherian by Lemma~\ref{noeth_new}) and the presentation $\langle\mathcal{T}, {(\R'')}^{\mathrm{op}}\rangle$ satisfies the sharp $\eta$-cube condition for every triple of pairwise distinct elements of $\mathcal{T}$ (Lemmatas~\ref{right_cube_1}, \ref{right_cube_2} and \ref{right_cube_3}), Proposition~\ref{cancellative_criterion} ensures that $\mathcal{M}(n,m)^{\mathrm{op}}$ is left-cancellative and admits conditional right-lcms. 
\end{proof}

\begin{cor}[Left-lcms of pairs of atoms]\label{cor_common_right}
Let $1\leq i < j \leq n$. The left-lcm of $\rho_i$ and $\rho_j$ is given by \begin{align*} 
\left\{ \begin{array}{ll}
	(\rho_1\rho_n^q)^{n-j} \rho_i=\rho_{n-j+i} (\rho_1\rho_n)^{n-j} \rho_j & \mbox{if~} D(i,j)=1,\\  
	(\rho_1\rho_n^q)^{n-j+1} \rho_i=\rho_{n-j+i+1} (\rho_1\rho_n)^{n-j} \rho_n^{q-1+D(n-j+i)} \rho_j & \mbox{if~} D(i,j)=0.\\
\end{array}\right.
\end{align*}
\end{cor}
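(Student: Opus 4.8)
The plan is to read the left-lcm off directly from the right-lcm machinery already set up for the opposite monoid. Recall from Proposition~\ref{right_cancellative} that $\mathcal{M}(n,m)^{\mathrm{op}}$ is left-cancellative and admits conditional right-lcms, so the right-lcm criterion of Proposition~\ref{cancellative_criterion} applies verbatim to the right-complemented presentation $\langle \mathcal{T} \mid (\mathcal{R}'')^{\mathrm{op}}\rangle$ with syntactic right-complement $\eta$. The anti-isomorphism $\mathcal{M}(n,m)^{\mathrm{op}}\to\mathcal{M}(n,m)$ induced by $\tau_i\leftrightarrow\rho_i$ (sending a word over $\mathcal{T}$ to its reversal over $\mathcal{S}$) carries right-multiples to left-multiples, hence right-lcms to left-lcms. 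So it suffices to compute the right-lcm of $\tau_i$ and $\tau_j$ in $\mathcal{M}(n,m)^{\mathrm{op}}$ and reverse the resulting word.

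First I would record existence. For $1\le i<j\le n$ the complement $\eta(\tau_i,\tau_j)$ is always defined by~\eqref{right_less}; hence by Proposition~\ref{cancellative_criterion} the atoms $\tau_i,\tau_j$ admit a common right-multiple, their right-lcm exists, and it equals $\tau_i\,\eta(\tau_i,\tau_j)=\tau_j\,\eta(\tau_j,\tau_i)$. Translating back, $\rho_i$ and $\rho_j$ admit a common left-multiple in $\mathcal{M}(n,m)$, and by Proposition~\ref{right_cancellative} they therefore admit a left-lcm, namely the reversal of the above.

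Then I would substitute the explicit values of $\eta$ from~\eqref{right_less} and~\eqref{right_great} and reverse. When $D(i,j)=1$, the word $\tau_i\,\eta(\tau_i,\tau_j)=\tau_i(\tau_n^q\tau_1)^{n-j}$ reverses to $(\rho_1\rho_n^q)^{n-j}\rho_i$, while $\tau_j\,\eta(\tau_j,\tau_i)=\tau_j(\tau_n^q\tau_1)^{n-j}\tau_{n-j+i}$ reverses to $\rho_{n-j+i}(\rho_1\rho_n^q)^{n-j}\rho_j$. When $D(i,j)=0$, using the second branches of~\eqref{right_less} and~\eqref{right_great} gives $(\rho_1\rho_n^q)^{n-j+1}\rho_i$ and, after reversal, $\rho_{n-j+i+1}\rho_n^{q-1+D(n-j+i)}(\rho_1\rho_n^q)^{n-j}\rho_j$. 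In each case the two reversed words are precisely the two sides of the relation $R(i,j)$, so their equality is exactly Proposition~\ref{rel_droite}, confirming that they represent one and the same element—the left-lcm.

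There is no genuine obstacle here, since the difficult input (the sharp $\eta$-cube condition underlying Proposition~\ref{right_cancellative}) has already been established. The only point demanding care is the order-reversal when passing from words over $\mathcal{T}$, read in $\mathcal{M}(n,m)^{\mathrm{op}}$, to words over $\mathcal{S}$, read in $\mathcal{M}(n,m)$: in the case $D(i,j)=0$ one must keep the factor $\rho_n^{q-1+D(n-j+i)}$ on the correct side of the power $(\rho_1\rho_n^q)^{n-j}$, exactly as dictated by reversing $(\tau_n^q\tau_1)^{n-j}\,\tau_n^{q-1+D(n-j+i)}\,\tau_{n-j+i+1}$, so that the output agrees with Proposition~\ref{rel_droite}.
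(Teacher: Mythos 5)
Your proof is correct and follows essentially the same route as the paper: invoke Proposition~\ref{cancellative_criterion} for $\mathcal{M}(n,m)^{\mathrm{op}}$ with the syntactic complement $\eta$, observe that $\eta(\tau_i,\tau_j)$ is always defined so the right-lcm is $\tau_i\eta(\tau_i,\tau_j)=\tau_j\eta(\tau_j,\tau_i)$, then reverse the words and replace the $\tau_k$'s by $\rho_k$'s. Your placement of the factor $\rho_n^{q-1+D(n-j+i)}$ immediately after $\rho_{n-j+i+1}$ (matching Proposition~\ref{rel_droite}) is in fact the correct reading: the formula displayed in the corollary transposes that factor past $(\rho_1\rho_n)^{n-j}$ and drops an exponent $q$, which is a typo in the statement rather than a flaw in your argument.
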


\begin{proof}
By Proposition~\ref{cancellative_criterion}, the right-lcm of $a$ and $b$ in $\mathcal{M}(n,m)^{\mathrm{op}}$ exists if and only if $\eta(a,b)$ is defined, and is then given by $a \eta(a,b)=b\eta(b,a)$. For $\tau_i, \tau_j$ we know that $\eta(\tau_i, \tau_j)$ is defined (see~\eqref{right_less} and~\eqref{right_great}), hence the right-lcm of $\tau_i$ and $\tau_j$ is given by $\tau_i \eta(\tau_i, \tau_j)=\tau_j\eta(\tau_j, \tau_i)$. It then suffices to reverse the obtained words and replace $\tau_k$'s by $\rho_k$'s to obtain the left-lcm of $\rho_i$ and $\rho_j$ in $\mathcal{M}(n,m)$.  
\end{proof}

\subsection{Least common multiple of the atoms}\label{section_lcm_atoms}

We now determine the left- and righ-lcm of the atoms in $\mathcal{M}(n,m)$. An interesting feature of $\mathcal{M}(n,m)$ is that for some values of $n$ and $m$, the left- and right-lcm of the atoms are distinct:

\begin{prop}[Least common multiple of the atoms in $\mathcal{M}(n,m)$]
We have\begin{enumerate} 
\item The right-lcm of the generators $\rho_i$, $1\leq i \leq n$ of $\mathcal{M}(n,m)$ is given by $\rho_n^{m-q}$.
\item The left-lcm of the generators $\rho_i$, $1\leq i \leq n$ of $\mathcal{M}(n,m)$ is given by \begin{align*}\left\{ \begin{array}{ll}
	\rho_{n-1} \rho_n^{q(n-2)+r-1} & \mbox{if~} D(1)=1,\\  
	\rho_n^{m-q} & \mbox{if~} D(1)=0.\\
\end{array}\right.\end{align*}
\end{enumerate}
\end{prop}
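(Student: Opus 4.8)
The plan is to determine each lcm via the pairwise lcm data already computed, exploiting the identity $\omega_n^{m-q}=(\rho_1\rho_n^q)^{n-1}\rho_1\rho_n^q$ established in Lemma~\ref{lem_red_garside} and Lemma~\ref{lem_rel_left}. For the right-lcm, I would first argue that $\rho_n^{m-q}$ is a common right-multiple of all atoms. By Lemma~\ref{lem_rel_left}~(2) we have $(\rho_1\rho_n^q)^i=\rho_i\rho_n^{\frac{mi-k_i}{n}}$, so each $\rho_i$ left-divides $(\rho_1\rho_n^q)^i$, which in turn left-divides $(\rho_1\rho_n^q)^n=\rho_n^m$ (by Lemma~\ref{lem_red_garside}); a small refinement using $(\rho_1\rho_n^q)^{n-1}\rho_1=\rho_n^{m-q}$ from Lemma~\ref{lem_red_garside} shows each $\rho_i$ in fact left-divides $\rho_n^{m-q}$. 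Since $\mathcal{M}(n,m)$ admits conditional right-lcms (Proposition~\ref{left_cancellative}), the right-lcm of all atoms exists; it remains to show it equals $\rho_n^{m-q}$ rather than a proper left-divisor thereof.

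To pin down the right-lcm exactly, I would use the length function $\lambda$ from Lemma~\ref{noeth_new}, together with the fact that the right-lcm must be right-divisible by $\rho_n$ and must be a right-multiple of every $\rho_i$. The cleanest route is to show that $\rho_n^{m-q}$ is the \emph{smallest} common right-multiple: if $c$ is any common right-multiple, then in particular $\rho_1$ and $\rho_n$ both left-divide $c$, and iterating the relations~\eqref{pres_pratique} one can push all the lower-indexed atoms through, forcing enough powers of $\rho_n$ to accumulate. Concretely, I would compute the pairwise right-lcms from Corollary~\ref{cor_right_lcm} and take their join, checking by the length count $\lambda(\rho_n^{m-q})=m-q$ that no shorter word can be a common multiple of, say, $\rho_1$ and $\rho_n$ while also being a multiple of all remaining atoms.

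For the left-lcm, the structure is governed by the left-lcm of pairs given in Corollary~\ref{cor_common_right}, and the two cases $D(1)=1$ versus $D(1)=0$ come precisely from the two branches of that corollary applied with $i=1$. When $D(1)=0$ the left-lcm of each pair involving $\rho_1$ produces the symmetric answer $\rho_n^{m-q}$, matching the right-lcm; when $D(1)=1$ the left-lcm of $\rho_1$ and $\rho_n$ is asymmetric, and I would extract the leading factor $\rho_{n-1}$ together with the power $\rho_n^{q(n-2)+r-1}$ by applying Corollary~\ref{cor_common_right} with $j=n$, $i=1$ (using $k_1=r$, $k_n=n$, $k_{n-1}=n-r$, and $B(1,1)=r-1$ as computed in the proof of Proposition~\ref{isom_torus}). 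The main obstacle I anticipate is the left-lcm in the case $D(1)=1$: one must show that the common left-multiple built from the pairwise data is genuinely the least one and that the exponent simplifies to $q(n-2)+r-1$. I would verify the exponent by a homogeneity/length argument: since the left-lcm must have $\lambda$-length equal to that of the corresponding word, comparing $\lambda$-values on both sides of the pairwise relations from Corollary~\ref{cor_common_right} fixes the power of $\rho_n$ uniquely, and the asymmetry $2r>n$ flagged in the introduction is exactly what makes the $D(1)=1$ branch differ from $\rho_n^{m-q}$.
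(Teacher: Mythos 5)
Your outline for the right-lcm gets the ``common multiple'' half right, and it matches the paper: each $\rho_i$ left-divides $(\rho_1\rho_n^q)^i$, hence left-divides $(\rho_1\rho_n^q)^{n-1}\rho_1=\rho_n^{m-q}$ by Lemmas~\ref{lem_red_garside} and~\ref{lem_rel_left}. But your minimality argument does not work. First, the length count is wrong: with the length function of Lemma~\ref{noeth_new} one has $\lambda(\rho_n)=n$, so $\lambda(\rho_n^{m-q})=n(m-q)$, not $m-q$; plain word length is not even well defined on $\mathcal{M}(n,m)$, since any relation with $D(i)=0$ has $q+2$ letters on one side and $q+1$ on the other. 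More fundamentally, no length estimate can certify that a common multiple is the \emph{least} one. Second, the pair you propose to pin things down with, $(\rho_1,\rho_n)$, is the wrong one: by Corollary~\ref{cor_right_lcm} (with $i=1$, $j=n$, and $B(n-1,n-1)=D(n-1)=0$) its right-lcm is only $\rho_n^{q+1}$, which forces nothing. The pair that does the job is $(\rho_{n-1},\rho_n)$: Corollary~\ref{cor_right_lcm} gives right-lcm $\rho_n\rho_n^{q(n-1)+B(1,1)}=\rho_n^{m-q}$, because $B(1,1)=r-1$. Once this is observed, minimality is automatic and lattice-theoretic, with no length argument: any common right-multiple of all the atoms is in particular a common right-multiple of $\rho_{n-1}$ and $\rho_n$, hence a right-multiple of their right-lcm $\rho_n^{m-q}$. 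This is exactly how the paper concludes.

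The left-lcm part has a more serious gap. The pair $(\rho_1,\rho_n)$, i.e.\ $i=1$, $j=n$ in Corollary~\ref{cor_common_right}, cannot produce the stated answer: $D(1,n)=0$ holds unconditionally (the sequence $S(i,n)$ is empty), so that corollary always lands in its second branch and yields the left-lcm $\rho_1\rho_n^q\rho_1=\rho_2\rho_n^{q+D(1)}$ --- there is no case distinction on $D(1)$ there, and no factor $\rho_{n-1}$ to extract. Your claim that for $D(1)=0$ \emph{every} pair involving $\rho_1$ has left-lcm $\rho_n^{m-q}$ is also false: for $j\geq 3$ the left-lcm of $\rho_1$ and $\rho_j$ has the form $\rho_k\rho_n^a$ with $k\in\{n-j+1,\,n-j+2\}$, $k<n$, and by $\lambda$-homogeneity such an element can never equal a power of $\rho_n$ (one would need $n\mid k_k$, i.e.\ $k=n$). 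The case split in the statement comes precisely from the pair $(\rho_1,\rho_2)$, since $D(1,2)=D(1)$: Corollary~\ref{cor_common_right} gives $(\rho_1\rho_n^q)^{n-1}\rho_1=\rho_n^{m-q}$ when $D(1)=0$, and $(\rho_1\rho_n^q)^{n-2}\rho_1=\rho_{n-1}\rho_n^{q(n-2)+B(1,2)}$ with $B(1,2)=r-1$ when $D(1)=1$, the latter rewriting via Lemma~\ref{lem_rel_left}~(1). Finally, even with the correct pair you still owe the verification that the remaining atoms $\rho_j$, $2<j\leq n-1$, right-divide the candidate; this is where the paper again uses Lemma~\ref{lem_rel_left}~(1), namely $(\rho_1\rho_n^q)^{n-j}\rho_j=\rho_n^{q(n-j)+1+B(j,j)}$, together with the exponent bound $q(n-j)+1+B(j,j)\leq q(n-2)+r-1$ (respectively $\leq m-q$). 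A homogeneity argument can check that an identity is consistent, but it supplies neither this divisibility nor the minimality.
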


\begin{proof}
We first prove that $\rho_n^{m-q}$ is the right-lcm of the atoms. By Corollary~\ref{cor_right_lcm}, the right-lcm of $\rho_n$ and $\rho_{n-1}$ is given by $\rho_n \rho_n^{q(n-1)+B(1,1)}$. But $B(1,1)=r-1$, hence $\rho_n \rho_n^{q(n-1)+B(1,1)}=\rho_n^{m-q}$. Hence to conclude the proof, it suffices to show that $\rho_i$ left-divides $\rho_n^{m-q}$ for all $1\leq i \leq n-2$. This is the case, as $\rho_n^{m-q}=(\rho_1 \rho_n^q)^{n-1}\rho_1$ (Lemma~\ref{lem_red_garside}), while $(\rho_1 \rho_n^q)^i=\rho_i \rho_n^{qi+B(1,n-i+1)}$ (Lemma~\ref{lem_rel_left}~(2)). 

We now determine the left-lcm of the atoms. First assume that $D(1)=0$. By Corollary~\ref{cor_common_right}, the left-lcm of $\rho_1$ and $\rho_2$ is given by $(\rho_1 \rho_n^q)^{n-1} \rho_1$ since $D(1,2)=D(1)=0$. But by Lemma~\ref{lem_red_garside} we have $(\rho_1 \rho_n^q)^{n-1} \rho_1=\rho_n^{m-q}$. To conclude the proof in this case, it suffices to show that $\rho_j$ is a right-divisor of $\rho_n^{m-q}$ for all $2 < j \leq n-1$. But by Lemma~\ref{lem_rel_left}~(1), for $2< j \leq n-1$ we have $$(\rho_1 \rho_n^q)^{n-j} \rho_j=\rho_n \rho_n^{q(n-j)+B(j,j)}=\rho_n^{q(n-j)+1+B(j,j)},$$ which right-divides $\rho_n^{m-q}$ as $1+B(j,j)\leq r$ and $j\geq 2$. Hence $\rho_j$ is a right-divisor of $\rho_n^{m-q}$ also for $2< j \leq n-1$. 

Now assume that $D(1)=1$. By Corollary~\ref{cor_common_right}, the left-lcm of $\rho_1$ and $\rho_2$ is given by $(\rho_1 \rho_n^q)^{n-2} \rho_1$. By Lemma~\ref{lem_rel_left}~(1), this is equal to $\rho_{n-1}\rho_n^{q(n-2)+B(1,2)}$, and $B(1,2)$ is the number of bad indices in $\{1, 2, \cdots, n-2\}$, which is $r-1$ since $n-1$ is good but $n$ is bad. We then conclude the proof as in the case $D(1)=0$, noting that for $2<j\leq n-1$, we have $q(n-j)+1+B(j,j)\leq q(n-j)+r= q(n-j)+1+r-1 \leq q(n-2)+r-1$. 
\end{proof}

\begin{exple}
Consider $n=3$ and $m=5$ as in Example~\ref{pres_3_5}. Then $q=1$ and $r=2$, and $1$ is bad. It follows from the above proposition that the right-lcm of $\rho_1, \rho_2$ and $\rho_3$ is equal to $\rho_3^{4}$, while their left-lcm is given by $\rho_2 \rho_3^{2}$. We indeed see from the presentation given in Example~\ref{pres_3_5} that $\rho_1, \rho_2$ and $\rho_3$ all right-divide $\rho_2 \rho_3^2$ as $$\rho_1 \rho_3 \rho_1=\rho_2 \rho_3^2=\rho_2 \rho_1 \rho_3 \rho_2.$$
\end{exple}

\section{Garside structure}\label{sec_garside_structure}

\begin{nota}
Let $\mathcal{M}(n,m)$ with its presentation given in~\eqref{pres_pratique}. We set $\Delta:=\rho_n^{m}$, omitting the dependency on $n$ and $m$. 
\end{nota}

\begin{lemma}\label{lem_garside_elt}
Let $1\leq i \leq n$. Let $a_i:=\rho_n^{qi+B(1,n-i+1)} (\rho_1 \rho_n^q)^{n-i}$. Then $a_i \rho_i=\rho_i a_i=\Delta$. In particular, every element of $\mathcal{S}=\{\rho_1, \rho_2, \dots, \rho_n\}$ is both a left- and a right-divisor of $\Delta$, and $\Delta$ is central in $\mathcal{M}(n,m)$.  
\end{lemma}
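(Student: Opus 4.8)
The plan is to verify the two equalities $\rho_i a_i = \Delta$ and $a_i \rho_i = \Delta$ separately, each by reducing to the identities already established in Lemmata~\ref{lem_red_garside} and~\ref{lem_rel_left}, and then to read off the divisibility and centrality statements as formal consequences. The first equality needs no case distinction, whereas the second is where the small amount of genuine computation lies.

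First I would establish $\rho_i a_i = \Delta$. By Lemma~\ref{lem_rel_left}(2) we have $\rho_i \rho_n^{qi+B(1,n-i+1)} = (\rho_1\rho_n^q)^i$, so that
\[
\rho_i a_i = (\rho_1\rho_n^q)^i (\rho_1\rho_n^q)^{n-i} = (\rho_1\rho_n^q)^n = \rho_n^m = \Delta,
\]
where the last equality is Lemma~\ref{lem_red_garside}. For the reverse equality $a_i\rho_i = \Delta$, which is the computational heart, I would treat $1\leq i\leq n-1$ first: applying Lemma~\ref{lem_rel_left}(1) with exponent $n-i$ and generator $\rho_i$ (note $(n-i)+i=n$) gives $(\rho_1\rho_n^q)^{n-i}\rho_i = \rho_n\,\rho_n^{q(n-i)+B(i,i)}$, so that everything collapses to a power of $\rho_n$:
\[
a_i\rho_i = \rho_n^{qi+B(1,n-i+1)}\cdot \rho_n^{\,1+q(n-i)+B(i,i)} = \rho_n^{\,qn+1+B(1,n-i+1)+B(i,i)}.
\]
It then remains to identify the exponent with $m=qn+r$, i.e. to show $B(1,n-i+1)+B(i,i)=r-1$. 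I would deduce this from the additivity relation~\eqref{sumofbij}, which with the triple $1\leq i\leq n$ yields $B(1,n-i+1)+B(i,i)=B(1,1)$, combined with the value $B(1,1)=r-1$ already recorded in the proof of Proposition~\ref{isom_torus}. The case $i=n$ is immediate, since then $a_n=\rho_n^{m-1}$ and $a_n\rho_n=\rho_n\,a_n=\rho_n^m=\Delta$ trivially.

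Finally, the stated consequences follow formally. The equalities $\rho_i a_i = \Delta = a_i \rho_i$ exhibit each $\rho_i$ simultaneously as a left-divisor and a right-divisor of $\Delta$. For centrality, since the $\rho_i$ generate $\mathcal{M}(n,m)$ it suffices to check $\rho_i\Delta = \Delta\rho_i$, which is immediate from $\rho_i\Delta = \rho_i(a_i\rho_i) = (\rho_i a_i)\rho_i = \Delta\rho_i$. The only mild obstacle I anticipate is the bookkeeping of the $B$-functions in the second equality; once the additivity identity~\eqref{sumofbij} is invoked, the computation is routine and the corollaries are purely formal.
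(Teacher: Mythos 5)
Your proof is correct and takes essentially the same approach as the paper: both establish $\rho_i a_i=(\rho_1\rho_n^q)^n=\Delta$ via Lemma~\ref{lem_rel_left}~(2) and Lemma~\ref{lem_red_garside}, and both collapse $a_i\rho_i$ to $\rho_n^{qn+1+B(1,n-i+1)+B(i,i)}$ via Lemma~\ref{lem_rel_left}~(1) before identifying the exponent with $m$. The only (harmless) differences are that you obtain $B(1,n-i+1)+B(i,i)=r-1$ from the additivity identity~\eqref{sumofbij} together with $B(1,1)=r-1$ rather than by directly counting bad indices, that you treat $i=n$ separately (a point of care the paper glosses over), and that your centrality argument $\rho_i\Delta=\rho_i a_i\rho_i=\Delta\rho_i$ even avoids the appeal to cancellativity that the paper makes.
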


\begin{proof}
Using Lemmatas~\ref{lem_red_garside} and~\ref{lem_rel_left}~(2) we have $$\Delta=(\rho_1\rho_n^q)^n=(\rho_1\rho_n^q)^i (\rho_1\rho_n^q)^{n-i}=\rho_{i} \rho_n^{qi+B(1,n-i+1)}(\rho_1\rho_n^q)^{n-i}=\rho_i a_i.$$ On the other hand, applying Lemma~\ref{lem_rel_left}~(2) we have \begin{align*} a_i \rho_i&=\rho_n^{qi+B(1,n-i+1)} (\rho_1 \rho_n^q)^{n-i} \rho_i=\rho_n^{qi+B(1,n-i+1)} \rho_n \rho_n^{q(n-i)+B(i,i)}. 
\end{align*} Observing that $B(1,n-i+1)$ counts the number of bad indices in $\{1, 2, \dots, i-1\}$, that $B(i,i)$ counts the number of bad indices in $\{i, i+1, \dots, n-1\}$, and that $n$ is bad, we get that $1+B(1,n-i+1)+B(i,i)$ is equal to the total number of bad indices in $\{1, 2, \dots, n\}$, which is equal to $r$. Hence $a_i\rho_i=\rho_n^{qn+r}=\rho_n^m=\Delta$. 

Since $\mathcal{S}$ generates $\mathcal{M}(n,m)$, the claim that $\Delta$ is central follows by cancellativity.  
\end{proof}

\begin{cor}[Garside element in $\mathcal{M}(n,m)$]\label{garside_elt}
The left- and right-divisors of $\Delta$ in $\mathcal{M}(n,m)$ coincide, and form a finite set, which we denote $\mathrm{Div}(\Delta)$.
\end{cor}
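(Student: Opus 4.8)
The plan is to deduce this corollary as a purely formal consequence of Lemma~\ref{lem_garside_elt}, combined with the two-sided cancellativity established in Propositions~\ref{left_cancellative} and~\ref{right_cancellative} and the additive length function $\lambda$ from Lemma~\ref{noeth_new}. Two assertions must be checked: first that the left-divisors and right-divisors of $\Delta$ coincide, and second that there are only finitely many of them. The genuine content has already been produced in Lemma~\ref{lem_garside_elt} (centrality of $\Delta$ and the fact that $\Delta=\rho_i a_i=a_i\rho_i$), so I do not expect a serious obstacle here; the work is to package the previous results correctly.

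For the coincidence of left- and right-divisors, I would exploit the centrality of $\Delta$ asserted in Lemma~\ref{lem_garside_elt}. Suppose $x$ is a left-divisor of $\Delta$, say $\Delta=xy$ for some $y\in\mathcal{M}(n,m)$. Since $\Delta$ is central we have $y\Delta=\Delta y$, and substituting $\Delta=xy$ into both sides gives $yxy=xy^2$; right-cancellativity (Proposition~\ref{right_cancellative}) then allows us to cancel the rightmost $y$, yielding $yx=xy=\Delta$. Hence $x$ is also a right-divisor of $\Delta$. The reverse implication is symmetric: if $\Delta=yx$, then centrality gives $y\Delta=\Delta y$, that is $y^2x=yxy$, and left-cancellativity (Proposition~\ref{left_cancellative}) yields $xy=yx=\Delta$, so that $x$ left-divides $\Delta$. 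Thus the set of left-divisors of $\Delta$ equals the set of its right-divisors, and the notation $\mathrm{Div}(\Delta)$ is unambiguous.

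For finiteness, I would invoke the additivity of $\lambda$ together with the normalization $\lambda(\rho_i)=i\geq 1$ for every generator. This gives $\lambda(\Delta)=\lambda(\rho_n^{m})=mn$, and whenever $x$ divides $\Delta$ on either side, writing $\Delta=xy$ (or $\Delta=yx$) and using $\lambda(\Delta)=\lambda(x)+\lambda(y)\geq\lambda(x)$ forces $\lambda(x)\leq mn$. Now if $x=\rho_{i_1}\cdots\rho_{i_k}$ is any expression of $x$ as a product of generators, then $\lambda(x)=\sum_{t=1}^{k} i_t\geq k$, so $x$ is represented by a word of length at most $mn$ in the $n$ generators. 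There are only finitely many such words, hence only finitely many elements of $\mathcal{M}(n,m)$ they represent, and $\mathrm{Div}(\Delta)$ is finite. The only mild point worth flagging is that finiteness relies precisely on this control of the number of generator-factors by $\lambda$, which is exactly what the strict positivity $\lambda(\rho_i)\geq 1$ provides; with this in hand the corollary follows immediately.
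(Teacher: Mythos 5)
Your proof is correct and follows essentially the same route as the paper: centrality of $\Delta$ (Lemma~\ref{lem_garside_elt}) combined with cancellativity to show left- and right-divisors coincide, and the additive length function of Lemma~\ref{noeth_new} for finiteness. The paper handles both inclusions in one stroke (from $ab=\Delta$ and $a\Delta=\Delta a=aba$, left-cancel $a$ to get $ba=\Delta$) and leaves the finiteness count implicit, but these are only cosmetic differences from your two-case argument and your explicit word-length bound.
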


\begin{proof}
Let $a, b\in\mathcal{M}(n,m)$ such that $ab=\Delta$. By the previous Lemma $\Delta$ is central in $\mathcal{M}(n,m)$. We hence have $$a\Delta=\Delta a= a ba.$$ By cancellativity (see Section~\ref{sec_cancellativity}) we get that $\Delta=ba$, hence left- and right-divisors of $\Delta$ coincide. The fact that $\mathrm{Div}(\Delta)$ is finite follows immediately from Lemma~\ref{noeth_new}.
\end{proof}

We can now prove the main result of the paper:

\begin{theorem}[New Garside structure on torus knot groups]\label{main_text}
The pair $(\mathcal{M}(n,m), \Delta)$ is a Garside monoid with Garside group isomorphic to the torus knot group $G(n,m)$. 
\end{theorem}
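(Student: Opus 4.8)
The plan is to verify the five conditions of Definition~\ref{def_garside} for the pair $(\mathcal{M}(n,m), \Delta)$ with $\Delta=\rho_n^m$, since essentially all the hard work has already been carried out in the preceding sections. The strategy is to assemble the previously established results into a single application of the Garside monoid axioms, and then invoke Ore's Theorem together with Proposition~\ref{isom_torus} to identify the group of fractions.

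First I would check conditions (1) and (2). Left- and right-cancellativity are exactly Propositions~\ref{left_cancellative} and~\ref{right_cancellative}, so $\mathcal{M}(n,m)$ is cancellative, establishing (1). Noetherianity of the divisibility is Lemma~\ref{noeth_new}, which provides the length function $\lambda$; this establishes (2). For condition (4), Lemma~\ref{lem_garside_elt} shows that each generator $\rho_i$ is both a left- and a right-divisor of $\Delta$, and Corollary~\ref{garside_elt} shows that the left- and right-divisors of $\Delta$ coincide. Since $\mathcal{S}=\{\rho_1,\dots,\rho_n\}$ generates $\mathcal{M}(n,m)$ and each $\rho_i$ divides $\Delta$, the divisors of $\Delta$ generate the monoid, giving (4). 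Condition (5), finiteness of $\Div(\Delta)$, is the final assertion of Corollary~\ref{garside_elt}, which follows from the length function bounding divisors of a fixed element.

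Next I would establish condition (3), the existence of left- and right-lcms and gcds for any two elements. The two cancellativity propositions give conditional right-lcms (Proposition~\ref{left_cancellative}) and conditional left-lcms (Proposition~\ref{right_cancellative}). Combined with the fact, from Lemma~\ref{lem_garside_elt} and Corollary~\ref{garside_elt}, that $\Delta$ is a Garside element in the sense of Lemma~\ref{cond_delta_lcm} (its left- and right-divisors coincide, form a finite set, and generate $\mathcal{M}(n,m)$), Lemma~\ref{cond_delta_lcm} upgrades the conditional lcms to genuine lcms: any two elements admit both a left- and a right-lcm. The existence of left- and right-gcds then follows from Lemma~\ref{lemma_gcd}, using that $\mathcal{M}(n,m)$ is cancellative with conditional lcms and that any two elements now have common multiples. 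This completes condition (3) and hence all five axioms, so $(\mathcal{M}(n,m),\Delta)$ is a Garside monoid.

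Finally, since $\mathcal{M}(n,m)$ is cancellative and any two elements admit a common multiple, Ore's Theorem (Theorem~\ref{thm:ore}) guarantees that $\mathcal{M}(n,m)$ embeds in its group of fractions $\mathcal{G}(n,m)$, which therefore is a Garside group by definition. Proposition~\ref{isom_torus} identifies $\mathcal{G}(n,m)$ with the torus knot group $G(n,m)$, completing the proof. I do not expect a genuine obstacle here: the theorem is a synthesis step, and the real difficulty — the verification of the sharp $\eta$-cube condition underlying right-cancellativity — has already been discharged (modulo the appendix). The only point requiring mild care is making sure the hypotheses of Lemma~\ref{cond_delta_lcm} are literally met, namely that $\Delta$ genuinely qualifies as a Garside element before concluding unconditional lcm existence.
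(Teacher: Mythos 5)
Your proposal is correct and follows essentially the same route as the paper's own proof: assemble cancellativity (Propositions~\ref{left_cancellative} and~\ref{right_cancellative}), Noetherianity (Lemma~\ref{noeth_new}), and the Garside element properties (Lemma~\ref{lem_garside_elt}, Corollary~\ref{garside_elt}), upgrade conditional lcms to genuine lcms via Lemma~\ref{cond_delta_lcm}, then conclude with Ore's Theorem and Proposition~\ref{isom_torus}. If anything, your explicit invocation of Lemma~\ref{lemma_gcd} for the gcd half of condition~(3) is slightly more careful than the paper, which leaves that step implicit.
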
	

\begin{proof}
The monoid $\mathcal{M}(n,m)$ is cancellative and admits conditional lcm's by Propositions~\ref{left_cancellative} and~\ref{right_cancellative}. It has Noetherian divisibility by Lemma~\ref{noeth_new}. By Corollary~\ref{garside_elt} and since, by Lemma~\ref{lem_garside_elt}, divisors of $\Delta$ include the generating set $\mathcal{S}$, the element $\Delta$ satisfies the last two conditions of Definition~\ref{def_garside}. We then get the existence of lcm's from the existence of conditional lcm's, applying Lemma~\ref{cond_delta_lcm}.

By Theorem~\ref{thm:ore}, we get that the Garside group $G(\mathcal{M}(n,m))$ has the same presentation as $\mathcal{M}(n,m)$, hence we conclude the proof using Proposition~\ref{isom_torus}.
\end{proof}

\section{Groups with analogous Garside structures}\label{sec_groups_analogous}

\subsection{The complex braid group of $G_{13}$}

As mentioned in Example~\ref{exple_g12}, the complex braid group of $G_{12}$ is isomorphic to $G(3,4)$ (see~\cite{Bannai}, or~\cite{BMR}) and hence admits the presentation $$B_{G_{12}}\cong ~\bigg\langle x_1, x_2, x_3 \ \bigg\vert\ 
\begin{matrix}
	x_1 x_2 x_3 x_1\\ = x_2 x_3 x_1 x_2\\
	=x_3 x_1 x_2 x_3 \end{matrix} \ \bigg\rangle.$$
In terms of complex braid groups, the generators $x_i$'s are so-called braided reflections. In terms of the presentation with the $\rho_i$'s one has $\rho_1\mapsto x_1$, $\rho_2 \mapsto x_3 x_1$, $\rho_3\mapsto x_2 x_3 x_1$. 

The complex braid group $B_{G_{13}}$ of $G_{13}$ admits a similar presentation with generators also given by braided reflections (see~\cite{Bannai, BMR}): $$B_{G_{13}}\cong\bigg\langle x_1, x_2, x_3 \ \bigg\vert\ 
\begin{matrix}
	x_1 x_2 x_3 x_1 x_2\\ = x_2 x_3 x_1 x_2 x_3,\\
	x_3 x_1 x_2 x_3\\ =x_1 x_2 x_3 x_1
\end{matrix}
\ \bigg\rangle.$$
As noticed by Picantin~\cite[Exemple 13]{Picantin}, this presentation is Garside. A similar Garside structure to the one constructed in this paper for torus knot groups can be constructed for $G_{13}$. Namely, the assignment $\rho_1 \mapsto x_1$, $\rho_2\mapsto x_2 x_1$, $\rho_3\mapsto x_2 x_3 x_1$ yields the presentation \begin{align}\label{pres_g13}B_{G_{13}}\cong\bigg\langle \rho_1, \rho_2, \rho_3 \ \bigg\vert\ 
\begin{matrix}
	\rho_2 \rho_3 \rho_1 = \rho_3^2\\
	\rho_1 \rho_3 \rho_2=\rho_3^2
\end{matrix} \ \bigg\rangle.\end{align}
We shall check in Proposition~\ref{dih_garside_thm} below that this presentation is Garside, as part of a bigger family of Garside presentations. To be more precise, the complex braid group of $G_{13}$ is isomorphic to the Artin group of dihedral type $I_2(6)$ (see~\cite{Bannai}). The standard presentation of $B_{I_2(6)}$ is given by \begin{align}\label{std_g2} B_{I_2(6)}\cong \langle \ \sigma, \tau \ \vert \ \sigma\tau\sigma\tau\sigma\tau=\tau\sigma\tau\sigma\tau\sigma \ \rangle
\end{align} and one can check directly that an isomorphism is given by $\sigma \mapsto (x_1 x_2 x_3 x_1)^{-1}$, $\tau\mapsto x_1$ (with inverse $x_1 \mapsto \tau$, $x_2\mapsto (\sigma \tau \sigma \tau \sigma \tau)^{-1} \sigma^2$, $x_3\mapsto \sigma^{-1} \tau \sigma$). Hence one passes from Presentation~\ref{pres_g13} to Presentation~\ref{std_g2} by $\rho_1 \mapsto \tau$, $\rho_2 \mapsto \sigma\tau^{-1} \sigma^{-1}\tau^{-1}\sigma^{-1}$, $\rho_3 \mapsto \tau^{-1} \sigma^{-1}$. Somewhat surprisingly, one can generalize Presentation~\ref{pres_g13} to a Garside presentation for all dihedral Artin groups of even type. We do it in the following subsection.  

\subsection{Dihedral Artin groups of even type}

Let $n\geq 1$. Consider the monoid presentation
\begin{equation}\label{dih_even}
\bigg\langle \tau_1, \tau_2, \rho \ \bigg\vert\ 
\begin{matrix}
\tau_1 \rho \tau_2=\rho^2\\
\tau_2 \rho^{n} \tau_1=\rho^{n+1}
\end{matrix}
\ \bigg\rangle
\end{equation}

Note that for $n=1$, this is the same as Presentation~\ref{pres_g13} from the previous section. We add the relation $\tau_2 \rho^n \tau_1=\tau_1 \rho \tau_2 \rho^{n-1}$, which is a consequence of the two defining relations, to the above presentation to get the presentation: \begin{equation}\label{dih_even_left_full}
\bigg\langle \tau_1, \tau_2, \rho \ \bigg\vert\ 
\begin{matrix}
\tau_1 \rho \tau_2=\rho^2\\
\tau_2 \rho^{n} \tau_1=\rho^{n+1}\\
\tau_2 \rho^n \tau_1=\tau_1 \rho \tau_2 \rho^{n-1}
\end{matrix}
\ \bigg\rangle
\end{equation}

\begin{rmq}
Presentation~\ref{dih_even} still makes sense for $n=0$, but in that case $\rho$ is not an atom of the corresponding monoid as $\rho=\tau_2 \tau_1$. In this case the obtained monoid is nothing but the Artin monoid of type $B_2=I_2(4)$. The monoid defined by Presentations~\ref{dih_even} or \ref{dih_even_left_full} will be In Proposition~\ref{dih_garside_thm} below to be a Garside monoid with corresponding Garside group isomorphic to the Artin group of type $I_2(2n+4)$, but we distinguish the case $n>1$ from the case $n=0$ which is not new and where the number of atoms differs.
\end{rmq}
Recall that the Artin group of dihedral type $I_2(4+2n)$ ($n\geq 0$) has standard presentation \begin{align}\label{dih_even_artin} B_{I_2(4+2n)}\cong \langle \ \sigma, \tau \ \vert \ (\sigma\tau)^{n+2}=(\tau\sigma)^{n+2} \ \rangle\end{align}

It is straighforward to check the following:

\begin{lemma}\label{isom_dih_even}
The group defined by Presentation~\ref{dih_even} is isomorphic to the Artin group of type $I_2(4+2n)$ via $\tau_1 \mapsto \tau$, $\tau_2 \mapsto \sigma\tau^{-1} \sigma^{-1} \tau^{-1} \sigma^{-1}$, $\rho \mapsto \tau^{-1} \sigma^{-1}$. The inverse map is given by $\tau \mapsto \tau_1$, $\sigma \mapsto (\tau_1 \rho)^{-1}$.
\end{lemma}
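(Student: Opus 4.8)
The plan is to exhibit the two assignments of the statement as mutually inverse group homomorphisms. Write $H$ for the group defined by Presentation~\ref{dih_even} and set $A:=B_{I_2(4+2n)}$ for the Artin group of Presentation~\ref{dih_even_artin}. Let $\varphi\colon H\to A$ be the map induced on generators by $\tau_1\mapsto\tau$, $\tau_2\mapsto\sigma\tau^{-1}\sigma^{-1}\tau^{-1}\sigma^{-1}$, $\rho\mapsto\tau^{-1}\sigma^{-1}$, and let $\psi\colon A\to H$ be induced by $\tau\mapsto\tau_1$, $\sigma\mapsto(\tau_1\rho)^{-1}$. First I would check that each map respects the defining relations of its source presentation, so that it is a well-defined homomorphism; then I would verify that $\varphi\circ\psi$ and $\psi\circ\varphi$ fix the generators.

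For well-definedness of $\varphi$, the first relation $\tau_1\rho\tau_2=\rho^2$ is immediate by cancellation: $\tau\cdot\tau^{-1}\sigma^{-1}\cdot\sigma\tau^{-1}\sigma^{-1}\tau^{-1}\sigma^{-1}=(\tau^{-1}\sigma^{-1})^2=\varphi(\rho)^2$. For the second relation it is convenient to note that $\varphi(\tau_2)=\sigma(\tau^{-1}\sigma^{-1})^2=\sigma\,\varphi(\rho)^2$, whence $\varphi(\tau_2\rho^n\tau_1)=\sigma\,\varphi(\rho)^{n+2}\tau$. Thus $\tau_2\rho^n\tau_1=\rho^{n+1}$ holds under $\varphi$ precisely when $\varphi(\rho)^{n+2}=(\tau^{-1}\sigma^{-1})^{n+2}$ commutes with $\tau$. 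This is the only genuine content of the argument, and it reduces to the classical fact that the full twist $(\sigma\tau)^{n+2}=(\tau\sigma)^{n+2}$ is central in the dihedral Artin group $A$; I would prove this in one line from the braid relation via $\tau(\sigma\tau)^{n+2}=(\tau\sigma)^{n+2}\tau=(\sigma\tau)^{n+2}\tau$, so that $(\sigma\tau)^{n+2}$, and hence its inverse $(\tau^{-1}\sigma^{-1})^{n+2}$, is central.

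For well-definedness of $\psi$, I would compute $\psi(\sigma)\psi(\tau)=(\tau_1\rho)^{-1}\tau_1=\rho^{-1}$ and $\psi(\tau)\psi(\sigma)=\tau_1\rho^{-1}\tau_1^{-1}$, so that the braid relation $(\sigma\tau)^{n+2}=(\tau\sigma)^{n+2}$ maps to $\rho^{-(n+2)}=\tau_1\rho^{-(n+2)}\tau_1^{-1}$, that is, to the assertion that $\rho^{n+2}$ commutes with $\tau_1$ in $H$. This is the mirror image across the isomorphism of the centrality fact above, and I would obtain it directly from the two defining relations of $H$: eliminating $\tau_2=\rho^{-1}\tau_1^{-1}\rho^2$ (from the first relation) in the second relation gives $\rho^{-1}\tau_1^{-1}\rho^{n+2}\tau_1=\rho^{n+1}$, which rearranges to $\tau_1^{-1}\rho^{n+2}\tau_1=\rho^{n+2}$, as required.

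Finally I would check the composites on generators. For $\psi\circ\varphi$ one has $\tau_1\mapsto\tau\mapsto\tau_1$, then $\rho\mapsto\tau^{-1}\sigma^{-1}\mapsto\tau_1^{-1}(\tau_1\rho)=\rho$, and $\tau_2\mapsto\sigma\,\varphi(\rho)^2\mapsto(\tau_1\rho)^{-1}\rho^2=\rho^{-1}\tau_1^{-1}\rho^2$, which equals $\tau_2$ by the first relation of $H$. For $\varphi\circ\psi$ one has $\tau\mapsto\tau_1\mapsto\tau$ and $\sigma\mapsto(\tau_1\rho)^{-1}\mapsto(\tau\cdot\tau^{-1}\sigma^{-1})^{-1}=\sigma$. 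Hence $\varphi$ and $\psi$ are mutually inverse isomorphisms. Every step is a routine cancellation except the two commutation statements of the second and third paragraphs; those are not hard, but they are the place where the relations must actually be used, and I expect them to be the main (if minor) obstacle.
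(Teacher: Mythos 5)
Your proposal is correct: the paper states this lemma with no proof at all (``It is straightforward to check the following''), and your verification — checking that both assignments respect the defining relations and that the composites fix the generators — is exactly the routine argument being alluded to. The two commutation facts you isolate ($(\sigma\tau)^{n+2}$ central in the Artin group, and $\rho^{n+2}$ commuting with $\tau_1$, obtained by eliminating $\tau_2$) are indeed the only points of substance, and your one-line derivations of each are sound.
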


Presentation~\ref{dih_even_left_full} is right-complemented. The monoid defined by such a presentation is Noetherian: put $\lambda(\tau_i)=1$, $\lambda(\rho)=2$. Note that it is left-cancellative if and only if it is right-cancellative, as Presentation~\ref{dih_even} is symmetric up to reversing the roles of $\tau_1$ and $\tau_2$. 

The syntactic right-complement $\theta$ attached to Presentation~\ref{dih_even_left_full} is given by $$\theta(\tau_1, \rho)=\rho \tau_2, ~\theta(\rho, \tau_1)=\rho,~\theta(\tau_2, \rho)=\rho^n \tau_1,~\theta(\rho, \tau_2)=\rho^n,~\theta(\tau_1, \tau_2)=\rho\tau_2 \rho^{n-1},~\theta(\tau_2, \tau_1)=\rho^n \tau_1.$$

\begin{lemma}\label{dih_cube}
Presentation~\ref{dih_even_left_full} satisfies the sharp $\theta$-cube condition for every triple of pairwise distinct generators. 
\end{lemma}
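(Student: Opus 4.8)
The plan is to exploit the fact that Presentation~\ref{dih_even_left_full} has only three generators $\tau_1,\tau_2,\rho$, so that the sharp $\theta$-cube condition for pairwise distinct generators reduces to a very short finite check. By Definition~\ref{def_cube}, the condition for a triple $(a,b,c)$ compares the two words $\theta(\theta(a,b),\theta(a,c))$ and $\theta(\theta(b,a),\theta(b,c))$, and this pair is manifestly symmetric under the interchange of $a$ and $b$. Hence only the choice of the apex $c$ matters, and there are exactly three cases to treat: $c=\rho$ with $\{a,b\}=\{\tau_1,\tau_2\}$, $c=\tau_1$ with $\{a,b\}=\{\tau_2,\rho\}$, and $c=\tau_2$ with $\{a,b\}=\{\tau_1,\rho\}$.

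In each case I would substitute the six tabulated values of $\theta$ on the generators (listed just before the statement) into the two iterated complements and simplify using the extension rules \eqref{c}--\eqref{d}; the only facts needed are the prefix-cancellation identity $\theta(uv,uw)=\theta(v,w)$ together with the boundary values $\theta(u,1)=1$ and $\theta(1,u)=u$. The observation that makes everything collapse is that, in each case, the two words fed into the outer $\theta$ share a common prefix, after which cancellation reduces the expression to a power of $\rho$ or to the empty word. For instance, for $c=\tau_2$ one finds $\theta(\theta(\tau_1,\rho),\theta(\tau_1,\tau_2))=\theta(\rho\tau_2,\rho\tau_2\rho^{n-1})=\rho^{n-1}$ and $\theta(\theta(\rho,\tau_1),\theta(\rho,\tau_2))=\theta(\rho,\rho^n)=\rho^{n-1}$, while for $c=\rho$ and for $c=\tau_1$ both iterated complements reduce to the empty word $1$ (using $\theta(w,w)=1$ on one side and prefix cancellation on the other).

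Since in every case both iterated complements are defined and yield literally the same word, the sharp $\theta$-cube condition holds, exactly as required to later invoke Proposition~\ref{cancellative_criterion}. I expect no genuine obstacle here: the computation is short, in sharp contrast with the torus-knot case of Section~\ref{sec_cancellativity}, because there are only three atoms and no defects to track. The two points deserving a word of care are that one must check equality \emph{as words} rather than mere $\mathcal{R}$-equivalence (the sharp version), which is automatic here since each expression collapses to $1$ or to $\rho^{n-1}$ purely by prefix cancellation, with the convention $\rho^{0}=1$ covering the boundary value $n=1$; and that the value $\theta(\tau_1,\tau_2)=\rho\tau_2\rho^{n-1}$ used throughout is precisely the datum supplied by the redundant relation $\tau_2\rho^n\tau_1=\tau_1\rho\tau_2\rho^{n-1}$ added in Presentation~\ref{dih_even_left_full}. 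This relation is what makes the presentation right-complemented with $\theta(\tau_1,\tau_2)$ defined, so that the criterion can correctly detect the common right-multiple of $\tau_1$ and $\tau_2$; this parallels the passage from $\mathcal{R}$ to $\mathcal{R}'$ (resp.\ $\mathcal{R}''$) in the torus-knot case.
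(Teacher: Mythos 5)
Your proposal is correct and follows essentially the same route as the paper: reduce to the three choices of the third generator by the $a\leftrightarrow b$ symmetry, then evaluate the iterated complements from the tabulated values of $\theta$ via prefix cancellation $\theta(uv,uw)=\theta(v,w)$ and the boundary rules, obtaining $1$, $1$, and $\rho^{n-1}$ exactly as in the paper's computation. Your extra remarks (word-equality for the sharp condition, the $\rho^{0}=1$ boundary case $n=1$, and the role of the added redundant relation in making the presentation right-complemented) are accurate but not needed beyond what the paper already does.
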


\begin{proof}
We have $$\theta( \theta(\tau_1, \tau_2), \theta(\tau_1, \rho))=\theta(\rho\tau_2 \rho^{n-1}, \rho \tau_2)=\theta(\rho^{n-1},1)=1.$$
Now $$\theta( \theta(\tau_2, \tau_1), \theta(\tau_2, \rho))=\theta(\rho^n \tau_1, \rho^n \tau_1)=1.$$
Hence $\theta( \theta(\tau_1, \tau_2), \theta(\tau_1, \rho))=\theta( \theta(\tau_2, \tau_1), \theta(\tau_2, \rho)).$
Similarly one checks that $$\theta( \theta(\tau_1, \rho), \theta(\tau_1, \tau_2))=\rho^{n-1}=\theta( \theta(\rho, \tau_1), \theta(\rho, \tau_2)),$$
$$\theta( \theta(\tau_2, \rho), \theta(\tau_2, \tau_1))=1=\theta( \theta(\rho, \tau_2), \theta(\rho, \tau_1)),$$ which concludes the proof.
\end{proof}

Applying Proposition~\ref{cancellative_criterion} we get:

\begin{cor}\label{cor_dih_cancel}
The monoid defined by Presentation~\ref{dih_even} is left- and right-cancellative and admits conditional (left- and right-) lcms. 
\end{cor}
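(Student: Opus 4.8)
The plan is to apply the left-cancellativity criterion of Proposition~\ref{cancellative_criterion} to the enlarged Presentation~\ref{dih_even_left_full}, and then to recover the two right-handed statements by exploiting the $\tau_1\leftrightarrow\tau_2$ symmetry of the presentation. First I would record that Presentation~\ref{dih_even_left_full} presents the same monoid as Presentation~\ref{dih_even}: the added relation $\tau_2\rho^n\tau_1=\tau_1\rho\tau_2\rho^{n-1}$ is a consequence of the first two, since $\tau_1\rho\tau_2\rho^{n-1}=\rho^2\rho^{n-1}=\rho^{n+1}=\tau_2\rho^n\tau_1$. This enlarged presentation is right-complemented, with syntactic right-complement $\theta$ exactly the one listed before Lemma~\ref{dih_cube}.

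Next I would check the hypotheses of Proposition~\ref{cancellative_criterion}. The monoid is right-Noetherian: assigning $\lambda(\tau_1)=\lambda(\tau_2)=1$ and $\lambda(\rho)=2$ gives a function that is additive on each defining relation (both sides of every relation have length $2n+2$, respectively $4$), so it extends to an additive length function and divisibility is Noetherian, hence in particular right-Noetherian, as in Lemma~\ref{noeth_new}. The sharp $\theta$-cube condition for every triple of pairwise distinct generators is precisely the content of Lemma~\ref{dih_cube}. Proposition~\ref{cancellative_criterion} then yields at once that the monoid is left-cancellative and admits conditional right-lcms.

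To obtain right-cancellativity and conditional left-lcms, I would invoke the symmetry of the defining data under the involution that swaps $\tau_1$ and $\tau_2$ and fixes $\rho$. Reversing all the words in the two defining relations of Presentation~\ref{dih_even} turns them into $\tau_2\rho\tau_1=\rho^2$ and $\tau_1\rho^n\tau_2=\rho^{n+1}$, and applying the swap $\tau_1\leftrightarrow\tau_2$ returns exactly Presentation~\ref{dih_even}. Hence the opposite monoid $M^{\mathrm{op}}$ is isomorphic to $M$ itself via $\tau_1\mapsto\tau_2$, $\tau_2\mapsto\tau_1$, $\rho\mapsto\rho$. Transporting the already-established left-cancellativity and conditional right-lcms of $M$ through this isomorphism gives the same properties for $M^{\mathrm{op}}$, which is to say that $M$ is right-cancellative and admits conditional left-lcms.

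I do not expect a genuine obstacle here: all the combinatorial difficulty is concentrated in the cube-condition computation of Lemma~\ref{dih_cube}, which is already in hand, and the remaining steps are routine bookkeeping. The only point deserving care is the opposite-monoid argument, where one must make sure that the swap $\tau_1\leftrightarrow\tau_2$ together with word reversal is a genuine isomorphism $M^{\mathrm{op}}\xrightarrow{\sim}M$ of \emph{presented} monoids, rather than merely a bijection of generators, so that the right-complemented structure and the conditional-lcm statement transfer correctly.
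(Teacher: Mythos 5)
Your proposal is correct and is essentially the paper's own argument: the paper likewise applies Proposition~\ref{cancellative_criterion} to the right-complemented Presentation~\ref{dih_even_left_full} (with Noetherianity from $\lambda(\tau_i)=1$, $\lambda(\rho)=2$ and the cube condition from Lemma~\ref{dih_cube}), and disposes of right-cancellativity by the same observation that Presentation~\ref{dih_even} is symmetric under word reversal combined with swapping $\tau_1$ and $\tau_2$. Your write-up merely makes explicit the steps the paper compresses into the remark preceding Lemma~\ref{dih_cube}, including the worthwhile point that the reversal-plus-swap must be checked to be an isomorphism $M^{\mathrm{op}}\cong M$ of presented monoids.
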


\begin{lemma}\label{dih_garside_elt}
Let $\Delta:=\rho^{n+2}$. Then $\Delta$ is a central Garside element in the monoid with Presentation~\ref{dih_even}.
\end{lemma}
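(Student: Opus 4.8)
The plan is to verify that $\Delta := \rho^{n+2}$ satisfies the two structural requirements of a Garside element as formulated in Lemma~\ref{cond_delta_lcm}, namely that the left- and right-divisors of $\Delta$ coincide and form a finite set, and that the divisors of $\Delta$ generate the monoid. Centrality of $\Delta$ will be the organizing fact: once I show $\Delta$ is central, the equality of left- and right-divisor sets follows immediately by the same cancellation argument used in Corollary~\ref{garside_elt}. So the first concrete step is to exhibit, for each atom, an explicit factorization of $\Delta$ witnessing that the atom is both a left- and a right-divisor, and to show $\rho\Delta = \Delta\rho$, $\tau_i\Delta = \Delta\tau_i$.

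First I would handle $\rho$, which is trivial: $\rho\cdot\rho^{n+1} = \rho^{n+1}\cdot\rho = \rho^{n+2} = \Delta$, so $\rho$ commutes with $\Delta$ and clearly divides it on both sides. Next I would treat $\tau_1$ and $\tau_2$ using the two defining relations of Presentation~\ref{dih_even}. From $\tau_1 \rho \tau_2 = \rho^2$ I would build up a factorization of a power of $\rho$ with $\tau_1$ as a left-factor and $\tau_2$ as a right-factor, and from $\tau_2 \rho^n \tau_1 = \rho^{n+1}$ the symmetric statement; chaining these (so that the total $\rho$-exponent reaches $n+2$) should give expressions of the form $\Delta = \tau_1(\cdots) = (\cdots)\tau_1$ and likewise for $\tau_2$. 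Concretely, I expect $\tau_1(\rho\tau_2\rho^n) = (\rho^{n}\tau_1\rho)\tau_2$-type manipulations, repeatedly substituting $\rho^2 = \tau_1\rho\tau_2$ and $\rho^{n+1} = \tau_2\rho^n\tau_1$ to move a $\tau_i$ from one end to the other, yielding $\Delta = \tau_i a_i = a_i' \tau_i$ for suitable words $a_i, a_i'$. Because the presentation is symmetric under swapping $\tau_1 \leftrightarrow \tau_2$ together with reversing words (as noted just before Lemma~\ref{dih_cube}), it suffices to do this for $\tau_1$ and invoke the symmetry for $\tau_2$.

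Once each atom both left- and right-divides $\Delta$, centrality of $\Delta$ follows from cancellativity (Corollary~\ref{cor_dih_cancel}): for any atom $s$ one has $s\Delta = \Delta s$ by comparing the two one-sided factorizations, and since the atoms generate the monoid, $\Delta$ commutes with every element. Then the coincidence of left- and right-divisors is exactly the argument of Corollary~\ref{garside_elt}: if $ab = \Delta$ then $a\Delta = \Delta a = aba$ gives $\Delta = ba$ by left-cancellativity. Finiteness of $\mathrm{Div}(\Delta)$ is immediate from Noetherianity of the length function $\lambda$ (with $\lambda(\tau_i)=1$, $\lambda(\rho)=2$), since divisors of $\Delta$ have bounded length $\lambda \le \lambda(\Delta) = 2(n+2)$ and the generating set is finite.

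The main obstacle I anticipate is purely computational bookkeeping in the second step: getting the $\rho$-exponents to match up so that the substitutions terminate exactly at $\rho^{n+2}$ rather than overshooting or undershooting, and making sure the same word $a_i$ works on both sides (or correctly tracking that the left-factorization word and the right-factorization word, though possibly distinct as words, both represent the needed complement). This is the kind of bounded induction on the $\rho$-exponent that the proof of Lemma~\ref{lem_red_garside} carried out in the torus-knot setting, so I would expect it to go through cleanly, with the symmetry of the presentation cutting the work roughly in half. No deep idea is required beyond choosing the factorizations correctly and then letting cancellativity deliver centrality.
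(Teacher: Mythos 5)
Your proposal is correct and follows essentially the same route as the paper's proof: exhibit, for each generator $x\in\{\tau_1,\tau_2,\rho\}$, a single element $y$ with $xy=\Delta=yx$ (namely $y=\rho^{n+1}$ for $\rho$, $y=\rho\tau_2\rho^n$ for $\tau_1$, and $y=\rho^n\tau_1\rho$ for $\tau_2$), deduce centrality, and then get the divisor properties as in the torus-knot case. The bookkeeping you worry about is in fact trivial: one application of each defining relation gives $\tau_1(\rho\tau_2\rho^n)=\rho^2\rho^n=\Delta$ and $(\rho\tau_2\rho^n)\tau_1=\rho\cdot\rho^{n+1}=\Delta$, so the same complement works on both sides (which, as you correctly flag, is exactly what the cancellativity argument for centrality requires) and no induction on the $\rho$-exponent is needed.
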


\begin{proof}
It suffices to show that for every generator $x\in \{\tau_1, \tau_2, \rho\}$, there is ane element $y$ of the monoid such that $xy=\Delta=yx$. This is clear for $x=\rho$ as $\Delta$ is a power of $\rho$. For $x=\tau_1$ we have $\Delta=\tau_1 \rho \tau_2 \rho^{n}=xy$ with $y=\rho \tau_2 \rho^{n}$. We have $yx=\rho \tau_2 \rho^{n} \tau_1= \rho \rho^{n+1}=\Delta=xy$. For $x=\tau_2$ we have $\Delta=\tau_2 \rho^n \tau_1 \rho=xy$ with $y=\rho^n \tau_1 \rho$. We have $yx=\rho^n \tau_1 \rho \tau_2=\rho^n \rho^2=\Delta$. 
\end{proof}

We deduce:

\begin{prop}[New Garside structure for $G_{13}$ and dihedral Artin groups of even type]\label{dih_garside_thm}
The monoid defined by Presentation~\ref{dih_even} is a Garside monoid, with (central) Garside element $\Delta=\rho^{n+2}$. The corresponding Garside group is isomorphic to the dihedral Artin group of type $I_2(4+2n)$ (which is also isomorphic to the complex braid group of $G_{13}$ when $n=1$).
\end{prop}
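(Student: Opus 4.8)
The plan is to verify the five conditions of Definition~\ref{def_garside} for the pair $(M,\Delta)$, where $M$ is the monoid defined by Presentation~\ref{dih_even} and $\Delta=\rho^{n+2}$, and then to identify the group of fractions. Most of the infrastructure is already in place. First, cancellativity (condition~(1)) is exactly Corollary~\ref{cor_dih_cancel}, which also gives that $M$ admits conditional left- and right-lcms. Second, Noetherianity (condition~(2)) has already been observed via the length function $\lambda(\tau_i)=1$, $\lambda(\rho)=2$, which is additive on the homogeneous relations of Presentation~\ref{dih_even_left_full}; this guarantees that $1$ is the only invertible element and that left- and right-divisibility are partial orders by Lemma~\ref{lem_partial}.

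Next I would turn to the Garside element. By Lemma~\ref{dih_garside_elt}, $\Delta=\rho^{n+2}$ is central and every generator is both a left- and right-divisor of $\Delta$, so the generating set $\{\tau_1,\tau_2,\rho\}$ sits inside $\Div(\Delta)$; this immediately gives the second bullet of Lemma~\ref{cond_delta_lcm} (the divisors of $\Delta$ generate $M$). For the first bullet I must argue that the left- and right-divisors of $\Delta$ coincide and form a finite set. Finiteness follows from Noetherianity together with the length function, since any divisor has $\lambda$-value at most $\lambda(\Delta)=2(n+2)$, bounding the word length. For the coincidence of left- and right-divisors I would mimic the argument of Corollary~\ref{garside_elt}: if $ab=\Delta$ then using centrality of $\Delta$ we get $a\Delta=\Delta a=aba$, and cancellativity yields $\Delta=ba$, so $b$ is also a left-divisor of $\Delta$ and symmetrically. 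Having established these, $\Delta$ is a Garside element in the sense following Lemma~\ref{cond_delta_lcm}, and that lemma upgrades the conditional lcms to genuine left- and right-lcms, giving condition~(3); conditions~(4) and~(5) are precisely what was just proved, and gcds follow from lcms via Lemma~\ref{lemma_gcd}. Hence $(M,\Delta)$ is a Garside monoid.

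Finally, for the identification of the Garside group, I would invoke Ore's Theorem~\ref{thm:ore}: since $M$ is cancellative and admits common multiples (being a Garside monoid), it embeds in its group of fractions $G(M)$, which has the same presentation~\ref{dih_even} as $M$. Lemma~\ref{isom_dih_even} then identifies this group with the Artin group $B_{I_2(4+2n)}$, and the special case $n=1$ gives the complex braid group of $G_{13}$ via the isomorphisms recorded after Presentation~\ref{std_g2}.

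I expect the main obstacle to be purely organizational rather than computational: nearly every ingredient is already proved in the excerpt, so the real work is assembling Lemmatas~\ref{cond_delta_lcm}, \ref{dih_garside_elt}, and Corollary~\ref{cor_dih_cancel} in the correct logical order and making sure the hypotheses of Lemma~\ref{cond_delta_lcm} are all verified — in particular the finiteness and left/right-coincidence of $\Div(\Delta)$, which is the one point not stated verbatim but which follows from centrality and Noetherianity exactly as in the torus knot case. The symmetry of Presentation~\ref{dih_even} under swapping $\tau_1\leftrightarrow\tau_2$ is a convenient shortcut that reduces the cancellativity verification to a single side, but Corollary~\ref{cor_dih_cancel} already packages this, so no additional effort is needed there.
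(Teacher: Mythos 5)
Your proposal is correct and follows essentially the same route as the paper: the paper's own proof simply says to argue exactly as in Theorem~\ref{main_text}, citing Corollary~\ref{cor_dih_cancel}, Lemma~\ref{dih_garside_elt}, the Noetherianity observation, and Lemma~\ref{isom_dih_even}, which is precisely the assembly you carry out. Your explicit verification of the coincidence and finiteness of $\Div(\Delta)$ (the analogue of Corollary~\ref{garside_elt}) and of the gcd half of condition~(3) via Lemma~\ref{lemma_gcd} merely spells out what the paper leaves implicit in its appeal to the earlier proof.
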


\begin{proof} 
We argue exactly as in the proof of Theorem~\ref{main_text}. We have seen above that the monoid has Noetherian divisibility, and the various other preliminary results required to apply the same proof as in the aforementioned theorem were given in Lemma~\ref{dih_even}, Corollary~\ref{cor_dih_cancel} and Lemma~\ref{dih_garside_elt}.

The isomorphism with a dihedral Artin group was established in Lemma~\ref{isom_dih_even}. 
\end{proof}

\begin{question}
It would be interesting to understand for which (finite) complex reflection groups the Garside structure introduced in this paper can be constructed and whether it admits a canonical construction from the reflection group data. 
\end{question}

\begin{rmq}
Note that the dihedral Artin group of type $I_2(m)$, where $m\geq 3$ is odd, is isomorphic to $G(2,m)$. As for $G(n,n+1)$, the Garside structure obtained in this paper was already given for $G(2,m)$ in~\cite[Section 6]{Gobet}. 
\end{rmq}

\appendix

\section{Sharp cube condition for right-cancellativity}\label{sec_appendix}

This appendix is devoted to checking the sharp $\eta$-cube condition for the presentation $\langle \mathcal{T}~\vert~(\mathcal{R}'')^{\mathrm{op}}\rangle$ of $\mathcal{M}(n,m)^{\mathrm{op}}$ introduced in the paragraph after Corollary~\ref{cor_pres_right_c}. That is, we check that for every triple $(\tau_i, \tau_j, \tau_\ell)$ if pairwise distinct elements of $\mathcal{T}$, we have \begin{align}\label{eta_right}\eta(\eta(\tau_i, \tau_j), \eta(\tau_i, \tau_\ell))=\eta(\eta(\tau_j, \tau_i), \eta(\tau_j, \tau_\ell)).\end{align} 
Note that it suffices to threat the three cases $i<j<\ell$, $i<\ell<j$, and $\ell<j<i$, as the remaining cases are obtained for free by swapping the roles of $i$ and $j$. These three cases are established respectively in Lemmatas~\ref{right_cube_1}, \ref{right_cube_2} and~\ref{right_cube_3} below. Recall the values of the syntactic right-complement $\eta$ given in~\eqref{right_less} and~\eqref{right_great}.

\begin{lemma}\label{right_cube_1}
	The presentation $\langle \mathcal{T}~\vert~(\mathcal{R}'')^{\mathrm{op}}\rangle$ satisfies the sharp $\eta$-cube condition for every triple $(\tau_i, \tau_j, \tau_\ell)$ of pairwise distinct elements of $\mathcal{T}$ with $i<j<\ell$.
\end{lemma}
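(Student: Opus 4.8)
The plan is to evaluate both sides of the sharp $\eta$-cube condition \eqref{eta_right} directly from the explicit values \eqref{right_less} and \eqref{right_great}, exploiting the fact that for $i<j<\ell$ all three ``inner'' complements are pure powers of the single word $P:=\tau_n^q\tau_1$. Indeed, since $i<j$, $i<\ell$ and $j<\ell$, formula \eqref{right_less} gives
$$\eta(\tau_i,\tau_j)=P^{\,n-j+1-D(i,j)},\quad \eta(\tau_i,\tau_\ell)=P^{\,n-\ell+1-D(i,\ell)},\quad \eta(\tau_j,\tau_\ell)=P^{\,n-\ell+1-D(j,\ell)},$$
while \eqref{right_great} gives $\eta(\tau_j,\tau_i)=P^{\,n-j}\,w$, where the tail $w$ equals $\tau_{n-j+i}$ if $D(i,j)=1$ and $\tau_n^{q-1+D(n-j+i)}\tau_{n-j+i+1}$ if $D(i,j)=0$.

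First I would record the elementary fact that for powers of the fixed word $P$ one has $\eta(P^s,P^t)=1$ whenever $s\geq t$ (and $=P^{t-s}$ otherwise): this follows by stripping the common prefix via $\eta(uv,uw)=\eta(v,w)$ --- the prefix-cancellation consequence of \eqref{c}--\eqref{d} already invoked in the proof of Lemma~\ref{cube_left} --- together with the base values $\eta(P^s,1)=1$ and $\eta(1,P^s)=P^s$ from \eqref{d}. For the left-hand side of \eqref{eta_right} this yields $\eta(\eta(\tau_i,\tau_j),\eta(\tau_i,\tau_\ell))=\eta(P^a,P^b)$ with $a=n-j+1-D(i,j)$ and $b=n-\ell+1-D(i,\ell)$; since $a-b=(\ell-j)+(D(i,\ell)-D(i,j))\geq 1-1=0$ by Lemma~\ref{lem_01}, we have $a\geq b$, and hence the left-hand side equals $1$.

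For the right-hand side I would peel off the prefix $P^{\,n-j}$ from the first argument using the relation $\eta(bc,a)=\eta(c,\eta(b,a))$ of \eqref{c}--\eqref{d}, obtaining
$$\eta(\eta(\tau_j,\tau_i),\eta(\tau_j,\tau_\ell))=\eta\!\left(P^{\,n-j}w,\;P^{\,c}\right)=\eta\!\left(w,\;\eta(P^{\,n-j},P^{\,c})\right),\qquad c=n-\ell+1-D(j,\ell).$$
Here $n-j-c=(\ell-j-1)+D(j,\ell)\geq 0$, so by the elementary fact above $\eta(P^{\,n-j},P^{\,c})=1$, whence the right-hand side collapses to $\eta(w,1)=1$ by \eqref{d}, regardless of which form the tail $w$ takes. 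Both sides therefore equal $1$, so \eqref{eta_right} holds and in particular both outer complements are defined, which is exactly the sharp $\eta$-cube condition for $i<j<\ell$.

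The main point --- and the reason this is the mildest of the three cases --- is that the tail $w$ of $\eta(\tau_j,\tau_i)$ never actually interacts with anything: the second argument $\eta(\tau_j,\tau_\ell)$ of the outer complement is a power $P^c$ small enough ($c\leq n-j$) that the prefix $P^{\,n-j}$ already absorbs it. Consequently the four delicate formulas of Lemmatas~\ref{lem_tech_left}--\ref{last_tech}, designed to handle complements of words carrying a $\tau$-tail, are not needed here; the only inputs are the prefix-stripping identity and the two index inequalities $a\geq b$ and $n-j\geq c$, both immediate from $j<\ell$ together with $D(\cdot,\cdot)\in\{0,1\}$.
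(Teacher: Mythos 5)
Your proof is correct, and its skeleton is the same as the paper's: evaluate both outer complements directly from \eqref{right_less} and \eqref{right_great}, and show that each collapses to the empty word via prefix cancellation, so that \eqref{eta_right} holds sharply with both sides equal to $1$. The difference is in the bookkeeping. The paper runs an exhaustive case analysis over the possible values of $(D(i,j),D(j,\ell),D(i,\ell))$ --- six cases treated separately, plus two combinations ruled out by Lemma~\ref{ft}~(2) --- and in each case checks by hand that the exponents of $\tau_n^q\tau_1$ line up so that both sides reduce to $\eta(\cdots,1)=1$. You instead encode all cases at once by writing the exponents uniformly as $n-j+1-D(i,j)$, etc., and reduce the whole lemma to the two inequalities $a-b=(\ell-j)+(D(i,\ell)-D(i,j))\geq 0$ and $n-j-c=(\ell-j-1)+D(j,\ell)\geq 0$, both immediate from $j<\ell$ and Lemma~\ref{lem_01}. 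This buys two things: the case split disappears entirely, and Lemma~\ref{ft}~(2) is no longer needed for this lemma (the combinations it excludes are simply handled vacuously by the same inequalities). The tools you invoke --- the prefix-cancellation identity $\eta(uv,uw)=\eta(v,w)$ already used in the proof of Lemma~\ref{cube_left}, the relation $\eta(bc,a)=\eta(c,\eta(b,a))$, and $\eta(w,1)=1$ from \eqref{d} --- are exactly those the paper uses, so nothing new is assumed; your observation that the tail $w$ of $\eta(\tau_j,\tau_i)$ never interacts with the second argument is precisely why this case, unlike the other two in the appendix, admits such a uniform treatment.
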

\begin{proof} 
	We will show that both sides of~\eqref{eta_right} are equal to $1$. We distinguish between various cases.
	\begin{itemize}
		\item Case $D(i,j)=D(j,\ell)=D(i,\ell)=1$.
		
		\noindent We have $\eta(\eta(\tau_i, \tau_j),\eta(\tau_i,\tau_\ell))=\eta((\tau_n^q\tau_1)^{n-j},(\tau_n^q\tau_1)^{n-\ell})=\eta((\tau_n^q\tau_1)^{\ell-j},1)=1$,
		
		\noindent while $\eta(\eta(\tau_j, \tau_i),\eta(\tau_j,\tau_\ell))=\eta((\tau_n^q\tau_1)^{n-j}\tau_{{n-j+i}},(\tau_n^q\tau_1)^{n-\ell})=\eta((\tau_n^q\tau_1)^{\ell-j}\tau_{n-j+i},1)=1$. 
		
		\item Case $D(i,j)=0$, $D(j,\ell)=D(i,\ell)=1$.
		
		\noindent We have $\eta(\eta(\tau_i, \tau_j),\eta(\tau_i,\tau_\ell))=\eta((\tau_n^q\tau_1)^{n-j+1},(\tau_n^q\tau_1)^{n-\ell})=\eta((\tau_n^q\tau_1)^{\ell-j+1},1)=1$,
		
		\noindent while $\eta(\eta(\tau_j, \tau_i),\eta(\tau_j,\tau_\ell))=\eta((\tau_n^q\tau_1)^{n-j}\tau_n^{q-1+D(n-j+i)} \tau_{{n-j+i+1}},(\tau_n^q\tau_1)^{n-\ell})=1$.
		
		\item Case $D(i,j)=1=D(i,\ell)$, $D(j,\ell)=0$. 
		
		\noindent We have $\eta(\eta(\tau_i, \tau_j),\eta(\tau_i,\tau_\ell))=\eta((\tau_n^q\tau_1)^{n-j},(\tau_n^q\tau_1)^{n-\ell})=\eta((\tau_n^q\tau_1)^{\ell-j},1)=1$,
		
		\noindent while $\eta(\eta(\tau_j, \tau_i),\eta(\tau_j,\tau_\ell))= \eta((\tau_n^q\tau_1)^{n-j}\tau_{{n-j+i}},(\tau_n^q\tau_1)^{n-\ell+1})=\eta((\tau_n^q\tau_1)^{\ell-j-1}\tau_{{n-j+i}},1)=1$.
		
		\item Cases $(D(i,j)=D(j,\ell)=1$, $D(i,\ell)=0)$ and $(D(i,j)=D(j,\ell)=0$, $D(i,\ell)=1)$. By Lemma~\ref{ft}~(2), these situations cannot appear. 
		
		\item Case $D(i,j)=D(i,\ell)=0$, $D(j,\ell)=1$.
		
		\noindent We have $\eta(\eta(\tau_i, \tau_j),\eta(\tau_i,\tau_\ell))=\eta((\tau_n^q\tau_1)^{n-j+1},(\tau_n^q\tau_1)^{n-\ell+1})=\eta((\tau_n^q\tau_1)^{\ell-j},1)=1$,
		
		\noindent while  $\eta(\eta(\tau_j, \tau_i),\eta(\tau_j,\tau_\ell))=\eta((\tau_n^q\tau_1)^{n-j}\tau_n^{q-1+D(n-j+i)} \tau_{{n-j+i+1}},(\tau_n^q\tau_1)^{n-\ell})=1.$
		
		\item Case $D(i,j)=1$, $D(j,\ell)=D(i,\ell)=0$.
		
		\noindent We have $\eta(\eta(\tau_i, \tau_j),\eta(\tau_i,\tau_\ell))=\eta((\tau_n^q\tau_1)^{n-j},(\tau_n^q\tau_1)^{n-\ell+1})=\eta((\tau_n^q\tau_1)^{\ell-j-1},1)=1$,
		
		\noindent while $\eta(\eta(\tau_j, \tau_i),\eta(\tau_j,\tau_\ell))=\eta((\tau_n^q\tau_1)^{n-j}\tau_{{n-j+i}},(\tau_n^q\tau_1)^{n-\ell+1})=\eta((\tau_n^q\tau_1)^{\ell-j-1}\tau_{n-j+i}, 1)=1$.
		
		\item Case $D(i,j)=D(j,\ell)=D(i,\ell)=0$.
		
		\noindent We have $\eta(\eta(\tau_i, \tau_j),\eta(\tau_i,\tau_\ell))=\eta((\tau_n^q\tau_1)^{n-j+1}, (\tau_n^q\tau_1)^{n-\ell+1})=\eta((\tau_n^q\tau_1)^{\ell-j},1)=1$,
		
		\noindent while  $\eta(\eta(\tau_j, \tau_i),\eta(\tau_j,\tau_\ell))=\eta((\tau_n^q\tau_1)^{n-j}\tau_n^{q-1+D(n-j+i)} \tau_{{n-j+i+1}},(\tau_n^q\tau_1)^{n-\ell+1})=1$.
\end{itemize} \end{proof}

\begin{lemma}\label{right_cube_2}
	The presentation $\langle \mathcal{T}~\vert~(\mathcal{R}'')^{\mathrm{op}}\rangle$ satisfies the sharp $\eta$-cube condition for every triple $(\tau_i, \tau_j, \tau_\ell)$ of pairwise distinct elements of $\mathcal{T}$ with $i<\ell<j$.
\end{lemma}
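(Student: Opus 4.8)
The plan is to proceed exactly as in the proof of Lemma~\ref{right_cube_1}, by a case distinction, but now exploiting the fact that the position $i<\ell<j$ changes which branch of \eqref{right_less}--\eqref{right_great} each factor of \eqref{eta_right} comes from. Write $P:=\tau_n^q\tau_1$ for brevity, and let $[\,\cdot\,]$ denote the indicator that takes the value $1$ or $0$ according to whether the displayed condition holds. Since $i<j$ and $i<\ell$, both inner complements on the left-hand side of \eqref{eta_right} are of ``less'' type, hence pure powers of $P$: by \eqref{right_less} one has $\eta(\tau_i,\tau_j)=P^{\,n-j+[D(i,j)=0]}$ and $\eta(\tau_i,\tau_\ell)=P^{\,n-\ell+[D(i,\ell)=0]}$. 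On the other hand $i<j$ and $\ell<j$, so both inner complements on the right-hand side are of ``great'' type, and by \eqref{right_great} they share the common prefix $P^{\,n-j}$.

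First I would dispose of the left-hand side. Using the elementary identity $\eta(uv,uw)=\eta(v,w)$ together with $\eta(1,w)=w$, and noting that $n-\ell\geq n-j+1$ forces the exponent of the second argument to dominate that of the first, the left-hand side of \eqref{eta_right} collapses to
\[
\eta\bigl(P^{\,n-j+[D(i,j)=0]},\,P^{\,n-\ell+[D(i,\ell)=0]}\bigr)=P^{\,(j-\ell)+[D(i,\ell)=0]-[D(i,j)=0]},
\]
whose exponent is always nonnegative. The content of the lemma is therefore to show that the right-hand side equals this same power of $P$. I would cancel the common prefix $P^{\,n-j}$ from $\eta(\tau_j,\tau_i)$ and $\eta(\tau_j,\tau_\ell)$, reducing \eqref{eta_right} to $\eta(w_1,w_2)$, where $w_1,w_2$ are the tails supplied by \eqref{right_great} and $w_1$ carries the strictly smaller leading index because $i<\ell$, and then split into four cases according to $(D(i,j),D(\ell,j))$.

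When both defects equal $1$ the tails are the bare atoms $\tau_{n-j+i}$ and $\tau_{n-j+\ell}$, so $\eta(w_1,w_2)$ follows directly from \eqref{right_less}, and matching it against the left-hand side reduces to the identity $D(n-j+i,n-j+\ell)=D(i,\ell)$, which is Lemma~\ref{ft}~(4) applied with $D(\ell,j)=D(i,j)$. In the two mixed cases, $(1,0)$ and $(0,1)$, Lemma~\ref{ft}~(2) applied to the increasing triple $(i,\ell,j)$ first pins down $D(i,\ell)$ (forcing the target power to be $P^{\,j-\ell}$ in both), after which one tail has the shape $\tau_n^{c}\tau_\bullet$ and $\eta(w_1,w_2)$ is evaluated by Lemma~\ref{lem_tech_right}, respectively Lemma~\ref{lem_tech_left}, reading $c=q-1+D(n-j+\bullet)$ as either the exponent $q$ or $q-1$, with the degenerate value $q=1$ handled by the base formula \eqref{right_less}. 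The remaining case $(0,0)$, where both tails have the form $\tau_n^{c}\tau_\bullet$, is the genuine core: none of the four technical lemmas applies verbatim, and I would expand $\eta(w_1,w_2)$ through the recursion relations $\eta(bc,a)=\eta(c,\eta(b,a))$ and $\eta(a,bc)=\eta(a,b)\eta(\eta(b,a),c)$ satisfied by $\eta$ until the pieces become instances of Lemmatas~\ref{tech_tn_3} and~\ref{last_tech}.

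The hard part will be precisely this $(0,0)$ case, together with the exponent bookkeeping in the mixed cases: one must verify that all spurious $\tau_n^{\,\cdots}$ factors cancel, so that $\eta(w_1,w_2)$ is again a pure power of $P$, and that the surviving power is exactly $(j-\ell)+[D(i,\ell)=0]-[D(i,j)=0]$. Both verifications rest on the defect identities of Lemma~\ref{ft}: principally part~(4) to line up the shifted indices $n-j+i,\,n-j+\ell$ with $i,\ell$, part~(2) to constrain the admissible triples $\bigl(D(i,\ell),D(\ell,j),D(i,j)\bigr)$ and rule out inconsistent combinations as was done in Lemma~\ref{right_cube_1}, and part~(6) to convert the interior defects $D(n-j+i),\,D(n-j+\ell)$ appearing in the tails into the ``planar'' defects $D(i+1,j),\,D(\ell+1,j)$ that match the left-hand count. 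Once these matchings are secured the four cases close uniformly, establishing \eqref{eta_right}.
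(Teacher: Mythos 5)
Your setup is sound and agrees with the paper's: the closed formula for the left-hand side (a pure power of $\tau_n^q\tau_1$ with exponent $(j-\ell)+[D(i,\ell)=0]-[D(i,j)=0]$) is exactly what the paper obtains case by case; cancelling the common prefix $(\tau_n^q\tau_1)^{n-j}$ on the right-hand side is also what the paper does; and your handling of the case $D(i,j)=D(\ell,j)=1$ via Lemma~\ref{ft}~(4), and of the two mixed cases via Lemma~\ref{ft}~(2) followed by Lemma~\ref{lem_tech_right}, respectively Lemma~\ref{lem_tech_left}, is the paper's argument as well (modulo the fact that the exceptional branches of those two lemmas must still be excluded, which the paper does with Lemma~\ref{ft}~(1), Lemma~\ref{ft}~(5), and one direct computation with the $k$'s when $q=1$).

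The genuine gap is in the case you yourself identify as the core, $(D(i,j),D(\ell,j))=(0,0)$, where your plan goes in the wrong direction. First, the claim that none of the four technical lemmas applies is false: here both tails are $\tau_n^{q-1+D(n-j+i)}\tau_{n-j+i+1}$ and $\tau_n^{q-1+D(n-j+\ell)}\tau_{n-j+\ell+1}$, so cancelling the common prefix $\tau_n^{q-1}$ leaves $\eta\bigl(\tau_n^{D(n-j+i)}\tau_{n-j+i+1},\,\tau_n^{D(n-j+\ell)}\tau_{n-j+\ell+1}\bigr)$; sub-casing on the two defect bits, either they are equal (cancel them too and conclude by \eqref{right_less}--\eqref{right_great}, using Lemma~\ref{ft}~(4) to get $D(n-j+i+1,n-j+\ell+1)=D(i,\ell)$), or exactly one $\tau_n$ survives and one is squarely in the situation of Lemma~\ref{lem_tech_left} or Lemma~\ref{lem_tech_right} -- which is precisely how the paper closes these cases. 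Second, Lemmatas~\ref{tech_tn_3} and~\ref{last_tech}, to which you propose to reduce by expanding the recursion, compute $\eta(\tau_n^q\tau_j,\tau_i)$ and $\eta(\tau_j,\tau_n^q\tau_i)$ for $i<j$, i.e.\ complements whose \emph{first} argument carries the larger-index atom; that configuration never arises when $i<\ell<j$ (it is what Lemma~\ref{right_cube_3}, with $\ell<j<i$, needs), so your proposed reduction would not land in instances of those lemmas. Third, the case cannot be closed by exponent bookkeeping plus Lemma~\ref{ft} alone: one must rule out the sub-cases $D(n-j+i)\neq D(n-j+\ell)$ that would produce the wrong power, and this requires deriving from the case hypotheses the relations $k_{\ell+n-j}+k_{i+n-\ell}=k_{i+n-j}$ (when $D(i,\ell)=1$), resp.\ $k_{\ell+n-j}+k_{i+n-\ell}=k_{i+n-j}+n$ (when $D(i,\ell)=0$), giving the implications $D(\ell+n-j)=1\Rightarrow D(i+n-j)=1$, resp.\ $D(\ell+n-j)=0\Rightarrow D(i+n-j)=0$; these computations are not covered by Lemma~\ref{ft} and are absent from your outline. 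Finally, Lemma~\ref{ft}~(6), which you invoke to convert $D(n-j+i)$ into $D(i+1,j)$, requires the hypotheses $D(i)=1$ and $D(j)=0$, which are not available here, and in fact it is never needed in this lemma's proof.
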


\begin{proof}
	We need to distinguish between various cases. Unlike in the proof of Lemma~\ref{right_cube_1}, the value of either side of~\eqref{eta_right} is not the same for all the cases.
	\begin{itemize}
		
		\item Case $D(i,\ell)=D(\ell,j)=D(i,j)=1$.
		
		\noindent We have $\eta(\eta(\tau_i, \tau_j),\eta(\tau_i,\tau_\ell))=\eta((\tau_n^q\tau_1)^{n-j},(\tau_n^q\tau_1)^{n-\ell})=\eta(1,(\tau_n^q\tau_1)^{j-\ell})=(\tau_n^q\tau_1)^{j-\ell},$
		
		\noindent while $\eta(\eta(\tau_j, \tau_i),\eta(\tau_j,\tau_\ell))=\eta((\tau_n^q\tau_1)^{n-j}\tau_{{n-j+i}},(\tau_n^q\tau_1)^{n-j}\tau_{n-j+\ell})=\eta(\tau_{n-j+i}, \tau_{n-j+\ell})$. By Lemma~\ref{ft}~(4) we have $D(n-j+i,n-j+\ell)=1$, hence $\eta(\tau_{n-j+i}, \tau_{n-j+\ell})=(\tau_n^q\tau_1)^{j-\ell}$.
		
		\item Case $D(i,\ell)=0$, $D(\ell,j)=D(i,j)=1$.
		
		\noindent We have $\eta(\eta(\tau_i, \tau_j),\eta(\tau_i,\tau_\ell))=\eta((\tau_n^q\tau_1)^{n-j},(\tau_n^q\tau_1)^{n-\ell+1})=\eta(1,(\tau_n^q\tau_1)^{j-\ell+1})=(\tau_n^q\tau_1)^{j-\ell+1},$
		
		\noindent while $\eta(\eta(\tau_j, \tau_i),\eta(\tau_j,\tau_\ell))=\eta((\tau_n^q\tau_1)^{n-j}\tau_{{n-j+i}},(\tau_n^q\tau_1)^{n-j}\tau_{{n-j+\ell}})=\eta(\tau_{n-j+i}, \tau_{n-j+\ell})$. By Lemma~\ref{ft}~(4) we have $D(n-j+i, n-j+\ell)=0$, hence $\eta(\tau_{n-j+i}, \tau_{n-j+\ell})=(\tau_n^q\tau_1)^{j-\ell+1}$. 
		
		\item Case $D(i,\ell)=1=D(i,j)$, $D(\ell,j)=0$.
		
		\noindent We have $\eta(\eta(\tau_i, \tau_j),\eta(\tau_i,\tau_\ell))=\eta((\tau_n^q\tau_1)^{n-j},(\tau_n^q\tau_1)^{n-\ell})=\eta(1,(\tau_n^q\tau_1)^{j-\ell})=(\tau_n^q\tau_1)^{j-\ell},$
		
		\noindent while \begin{align*}
			\eta(\eta(\tau_j, \tau_i),\eta(\tau_j,\tau_\ell))&= \eta((\tau_n^q\tau_1)^{n-j}\tau_{{n-j+i}},(\tau_n^q\tau_1)^{n-j} \tau_n^{q-1+D(n-j+\ell)}\tau_{{n-j+\ell+1}})\\
			&=\eta(\tau_{{n-j+i}},\tau_n^{q-1+D(n-j+\ell)}\tau_{{n-j+\ell+1}}).\end{align*}
		
		In the case $D(n-j+\ell)=1$, by Lemma~\ref{lem_tech_right}~(1) we have $\eta(\tau_{{n-j+i}},\tau_n^{q}\tau_{{n-j+\ell+1}})=(\tau_n^q\tau_1)^{j-\ell}$ except if $D(n-j+i)=0$ and $D(n-j+i+1,n-j+\ell+1)=0$. But by Lemma~\ref{ft}~(1) it cannot happen. In the case $D(n-j+\ell)=0$, by Lemma~\ref{lem_tech_right} we have $\eta(\tau_{{n-j+i}},\tau_n^{q-1}\tau_{{n-j+\ell+1}})=(\tau_n^q \tau_1)^{j-\ell}$ except possibly if $q=1$ and $D(n-j+i,n-j+\ell+1)=1$. But we claim that this last case cannot happen. Indeed, using $D(i,j)=1=D(i,\ell)$ and $D(\ell,j)=0$, a calculation yields $k_{i+n-\ell}=n-k_{\ell+n-j}+k_{i+n-j}.$ Subtracting $r$ in each side gives, using the equality $D(n-j+\ell)=0$, that $k_{i+n-\ell}-r=n-k_{\ell+n-j+1}+k_{i+n-j}>0$, hence $D(i+n-\ell-1)=0$. We then get that \begin{align*} n D(n-j+i, n-j+\ell+1)&=-k_{n-j+\ell+1}+n+k_{n-j+i}-k_{n+i-\ell-1}\\ &=-k_{n-j+\ell}+n+k_{n-j+i}-k_{n+i-\ell}=0, \end{align*} 
		yielding $D(n-j+i,n-j+\ell+1)=0$.

		\item Cases $(D(i,\ell)=D(\ell,j)=1, D(i,j)=0)$ and $(D(i,\ell)=0=D(\ell,j), D(i,j)=1)$. By Lemma~\ref{ft}~(2), these situations cannot appear.
		
		\item Case $D(i,\ell)=0=D(i,j)$, $D(\ell,j)=1$.
		
		\noindent We have $\eta(\eta(\tau_i, \tau_j),\eta(\tau_i,\tau_\ell))=\eta((\tau_n^q\tau_1)^{n-j+1}, (\tau_n^q\tau_1)^{n-\ell+1})=\eta(1,(\tau_n^q\tau_1)^{j-\ell})=(\tau_n^q\tau_1)^{j-\ell},$
		
		\noindent while \begin{align*}
			\eta(\eta(\tau_j, \tau_i),\eta(\tau_j,\tau_\ell))&= \eta((\tau_n^q\tau_1)^{n-j}\tau_n^{q-1+D(n-j+i)} \tau_{{n-j+i+1}},(\tau_n^q\tau_1)^{n-j}\tau_{{n-j+\ell}}) \\ &= \eta(\tau_n^{q-1+D(n-j+i)} \tau_{{n-j+i+1}},\tau_{{n-j+\ell}}).\end{align*} 
		Assume that $D(n-j+i)=0$. We then get $\eta(\tau_n^{q-1} \tau_{{n-j+i+1}},\tau_{{n-j+\ell}})=(\tau_n^q\tau_1)^{j-\ell}$ if $q\neq 1$ by Lemma~\ref{lem_tech_left}. If $q=1$, let us first consider the case $i+1\neq \ell$. Then we have $\eta(\tau_n^{q-1} \tau_{{n-j+i+1}},\tau_{{n-j+\ell}})=\eta(\tau_{{n-j+i+1}},\tau_{{n-j+\ell}})=(\tau_n^q\tau_1)^{j-\ell}$ except if $D(n-j+i+1,n-j+\ell)=0$. But in this case one gets, combining the conditions $D(n-j+i+1,n-j+\ell)=0$ and $D(n-j+i)=0$, that $n+k_{n-j+i}+r-k_{n-j+\ell}=k_{n+i+1-\ell},$ and combining the three conditions $D(i,\ell)=0=D(i,j)$, $D(\ell,j)=1$ yields $k_{n-j+i}-k_{n-j+\ell}=k_{n-i+\ell}$. We thus get $k_{n+i+1-\ell}=n+r+k_{i+n-\ell}>n,$ a contradiction. If $i+1=\ell$ then we have $D(i,i+1)=D(i,j)=0, D(i+1,j)=1$ which by Lemma~\ref{ft}~(5) yields $D(n-j+i)=1$, a contradiction. Now assume that $D(n-j+i)=1$. By Lemma~\ref{lem_tech_left}, we have $\eta(\tau_n^q \tau_{{n-j+i+1}}, \tau_{{n-j+\ell}})=(\tau_n^q\tau_1)^{j-\ell}$ except possibly if $D(n-j+\ell)=0$, $D(n-j+i+1,n-j+\ell+1)=1$. But since $D(n-j+i)=1$, by Lemma~\ref{ft}~(1) this cannot happen.

		\item Case $D(i,\ell)=1$, $D(\ell,j)=D(i,j)=0$. 
		
		\noindent We have $\eta(\eta(\tau_i, \tau_j),\eta(\tau_i,\tau_\ell))=\eta((\tau_n^q\tau_1)^{n-j+1},(\tau_n^q\tau_1)^{n-\ell})=\eta(1, (\tau_n^q\tau_1)^{j-\ell-1})=(\tau_n^q\tau_1)^{j-\ell-1},$
		
		\noindent while \begin{align*}
			\eta(\eta(\tau_j, \tau_i),\eta(\tau_j,\tau_\ell))&= \eta((\tau_n^q\tau_1)^{n-j}\tau_n^{q-1+D(n-j+i)} \tau_{{n-j+i+1}},(\tau_n^q\tau_1)^{n-j}\tau_n^{q-1+D(n-j+\ell)} \tau_{{n-j+\ell+1}})\\ &=\eta(\tau_n^{D(n-j+i)} \tau_{{n-j+i+1}},\tau_n^{D(n-j+\ell)} \tau_{{n-j+\ell+1}}). 
		\end{align*} 
		
		The conditions $D(i,\ell)=1$, $D(\ell,j)=0$, $D(i,j)=0$ imply that $k_{\ell+n-j}+k_{i+n-\ell}=k_{i+n-j}$, hence we have the implication $(D(\ell+n-j)=1\Rightarrow D(i+n-j)=1)$. It follows that the case $D(\ell+n-j)=1$, $D(i+n-j)=0$ cannot appear. In the case $D(\ell+n-j)=0$, $D(i+n-j)=1$, by Lemma~\ref{lem_tech_left} we get that $\eta(\tau_n \tau_{{n-j+i+1}}, \tau_{{n-j+\ell+1}})=(\tau_n^q\tau_1)^{j-\ell-1}$ except if $q=1$, $D(n-j+\ell+1)=0$ and $D(n-j+i+1, n-j+\ell+2)=1$. But by Lemma~\ref{ft}~(1) it cannot happen. Hence assume that $D(\ell+n-j)=D(i+n-j)$. Using this we obtain that $D(n-j+i+1,n-j+\ell+1)=D(n-j+i,n-j+\ell)$, which is equal to $D(i,\ell)=1$ by Lemma~\ref{ft}~(4). For $\varepsilon\in\{0,1\}$ we thus get $\eta(\tau_n^\varepsilon\tau_{{n-j+i+1}},\tau_n^\varepsilon\tau_{{n-j+\ell+1}})=\eta(\tau_{{n-j+i+1}},\tau_{{n-j+\ell+1}})=(\tau_n^q\tau_1)^{j-\ell-1}$.
		\item Case $D(i,\ell)=D(\ell,j)=D(i,j)=0$.
		
		\noindent We have $\eta(\eta(\tau_i, \tau_j),\eta(\tau_i,\tau_\ell))=\eta((\tau_n^q\tau_1)^{n-j+1},(\tau_n^q\tau_1)^{n-\ell+1})=\eta(1, (\tau_n^q\tau_1)^{j-\ell})=(\tau_n^q\tau_1)^{j-\ell},$
		
		\noindent while as in the previous case we have  \begin{align*}
			\eta(\eta(\tau_j, \tau_i),\eta(\tau_j,\tau_\ell))&= \eta(\tau_n^{D(n-j+i)} \tau_{{n-j+i+1}},\tau_n^{D(n-j+\ell)} \tau_{{n-j+\ell+1}}).\end{align*}
		The conditions $D(i,\ell)=D(\ell,j)=D(i,j)=0$ yield $k_{\ell+n-j}+k_{i+n-\ell}=k_{i+n-j}+n$, hence we have the implication $(D(\ell+n-j)=0\Rightarrow D(i+n-j)=0)$. It follows that the case $D(\ell+n-j)=0, D(i+n-j)=1$ cannot appear. In the case $D(\ell+n-j)=1, D(i+n-j)=0$, by Lemma~\ref{lem_tech_right} we get $\eta(\tau_{n-j+i+1}, \tau_n \tau_{n-j+\ell+1})=(\tau_n^q \tau_1)^{j-\ell}$ except in two cases, which in fact cannot appear: if $i+1=\ell$ and $D(n-j+i+1)=0$ (which yields $D(n-j+\ell)=0$, a contradiction), and if $i+1\neq \ell$, $D(n-j+i+1)=0$ and $D(n-j+i+2,n-j+\ell+1)=0$ (which contradicts Lemma~\ref{ft}~(1)).  
		Hence assume that $D(n-j+i)=D(n-j+\ell)$. We then easily obtain that $D(n-j+i+1,n-j+\ell+1)=D(n-j+i, n-j+\ell)$, which by Lemma~\ref{ft}~(4) is equal to $D(i,\ell)=0$. For $\varepsilon\in\{0,1\}$ we thus get $\eta(\tau_n^{\varepsilon}\tau_{{n-j+i+1}},\tau_n^{\varepsilon}\tau_{{n-j+\ell+1}})=\eta(\tau_{{n-j+i+1}},\tau_{{n-j+\ell+1}})=(\tau_n^q\tau_1)^{j-\ell}$.
	\end{itemize}
\end{proof}

\begin{lemma}\label{right_cube_3}
	The presentation $\langle \mathcal{T}~\vert~(\mathcal{R}'')^{\mathrm{op}}\rangle$ satisfies the sharp $\eta$-cube condition for every triple $(\tau_i, \tau_j, \tau_\ell)$ of pairwise distinct elements of $\mathcal{T}$ with $\ell<j<i$.
\end{lemma}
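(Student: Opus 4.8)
The plan is to follow the same case-analysis strategy used in the proofs of Lemmatas~\ref{right_cube_1} and~\ref{right_cube_2}, exploiting the explicit values of $\eta$ recorded in~\eqref{right_less} and~\eqref{right_great} together with the four technical Lemmatas~\ref{lem_tech_left}--\ref{last_tech}. First I would specialize these formulas to the configuration $\ell<j<i$. Writing $a$ for the exponent occurring in $\eta(\tau_j,\tau_i)=(\tau_n^q\tau_1)^{a}$ (so $a=n-i$ if $D(j,i)=1$ and $a=n-i+1$ if $D(j,i)=0$), one checks, using $\eta(uv,uw)=\eta(v,w)$ and $\eta(1,w)=w$, that the right-hand side of~\eqref{eta_right} simplifies \emph{explicitly}: since $\eta(\tau_j,\tau_\ell)$ begins with the prefix $(\tau_n^q\tau_1)^{n-j}$ and $a\le n-j$, one gets
$$\eta(\eta(\tau_j,\tau_i),\eta(\tau_j,\tau_\ell))=(\tau_n^q\tau_1)^{\,n-j-a}\,w,$$
where $w=\tau_{n-j+\ell}$ if $D(\ell,j)=1$ and $w=\tau_n^{q-1+D(n-j+\ell)}\tau_{n-j+\ell+1}$ if $D(\ell,j)=0$, and $n-j-a\in\{i-j,\,i-j-1\}$. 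Thus the entire content of the lemma is the computation of the left-hand side and its comparison with this word.

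For the left-hand side, both $\eta(\tau_i,\tau_j)$ and $\eta(\tau_i,\tau_\ell)$ share the common prefix $(\tau_n^q\tau_1)^{n-i}$, so cancelling it reduces~\eqref{eta_right} to $\eta(u_j,u_\ell)$, where $u_j$ (resp.\ $u_\ell$) is the tail of $\eta(\tau_i,\tau_j)$ (resp.\ $\eta(\tau_i,\tau_\ell)$); each tail is either a single generator $\tau_{n-i+j}$ (resp.\ $\tau_{n-i+\ell}$) when $D(j,i)=1$ (resp.\ $D(\ell,i)=1$), or a word $\tau_n^{q-1+D(\cdot)}\tau_{\cdot+1}$ otherwise. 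I would then organize the proof by the triple $(D(\ell,j),D(j,i),D(\ell,i))$ of defects, each entry lying in $\{0,1\}$ by Lemma~\ref{lem_01}; by Lemma~\ref{ft}~(2) (applied to $\ell<j<i$) the two combinations with $D(\ell,j)=D(j,i)\ne D(\ell,i)$ are impossible, leaving the six cases $(1,1,1),(0,1,1),(0,1,0),(1,0,1),(0,0,0),(1,0,0)$. Since $n-i+\ell<n-i+j$, in each case $\eta(u_j,u_\ell)$ is exactly of the shape handled by one of the technical lemmas: when $D(j,i)=D(\ell,i)=1$ it is $\eta(\tau_{n-i+j},\tau_{n-i+\ell})$, evaluated via~\eqref{right_great} after identifying $D(n-i+\ell,n-i+j)=D(\ell,j)$ by Lemma~\ref{ft}~(4); when exactly one of $D(j,i),D(\ell,i)$ vanishes one invokes Lemma~\ref{tech_tn_3} or Lemma~\ref{last_tech}; and when both vanish one first cancels the common $\tau_n$-prefix of the two tails and then applies Lemma~\ref{tech_tn_3} or~\ref{last_tech} to the remainder.

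In every case the output of the technical lemma is a word $(\tau_n^q\tau_1)^{p}\tau_n^{e}\tau_{s}$, and the task is to verify that the exponent $p$, the $\tau_n$-power $e$, and the index $s$ coincide with those of the explicit right-hand side computed above. The index and power bookkeeping is handled by the shift identities of Lemma~\ref{ft}, principally~(4) (to transport a defect such as $D(n-i+\ell,n-i+j)$ back to $D(\ell,j)$) and~(6) (to relate $D(n-j+\ell)$ to $D(\ell+1,j)$ and so reconcile the $\tau_n$-exponents), while spurious sub-cases thrown up by the exceptional branches of the lemmas are discarded exactly as in Lemma~\ref{right_cube_2}: one shows the offending configuration of defects is incompatible with the standing hypotheses by a short computation with the residues $k_\bullet$, or directly via Lemma~\ref{ft}~(1).

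The hard part will be the case $D(j,i)=D(\ell,i)=0$ (the two valid triples $(0,0,0)$ and $(1,0,0)$), where both tails $u_j,u_\ell$ carry nontrivial $\tau_n$-powers. There $\eta(u_j,u_\ell)$ is computed by chaining two of the technical lemmas, and the resulting $\tau_n$-exponent $e$ is an alternating combination of defects at shifted indices that must be collapsed, via Lemmatas~\ref{ft}~(4) and~(6), to the single defect appearing in $w$ on the right-hand side; simultaneously several exceptional branches of Lemmatas~\ref{tech_tn_3} and~\ref{last_tech} must be excluded using Lemma~\ref{ft}~(1). This is the most delicate bookkeeping, entirely parallel to the corresponding final cases of Lemma~\ref{right_cube_2}, and I expect no conceptual obstruction beyond the length of the case analysis.
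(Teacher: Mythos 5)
Your proposal follows essentially the same route as the paper's own proof: the same reduction of both sides of the cube condition by cancelling the common prefixes (giving the explicit word $(\tau_n^q\tau_1)^{n-j-a}w$ on the right-hand side), the same six-case analysis on the triple $(D(\ell,j),D(j,i),D(\ell,i))$ with the two excluded configurations ruled out by Lemma~\ref{ft}~(2), and the same appeals to~\eqref{right_great} via Lemma~\ref{ft}~(4) when $D(j,i)=D(\ell,i)=1$, to Lemmatas~\ref{tech_tn_3} and~\ref{last_tech} in the mixed and doubly-degenerate cases, and to Lemma~\ref{ft}~(1) together with residue computations in the $k_\bullet$ to discard exceptional branches. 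The difference is one of completeness rather than method: the paper carries out the exponent/index verifications and impossibility arguments that you defer as bookkeeping, and they go through exactly as you anticipate, with the cases $D(j,i)=D(\ell,i)=0$ indeed being the longest.
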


\begin{proof}
	We need to distinguish between various cases. As in the proof of Lemma~\ref{right_cube_2}, the value of either side of~\eqref{eta_right} is not the same for all the cases.
	\begin{itemize}
		\item Case $D(\ell,j)=D(j,i)=D(\ell,i)=1$. 
		
		\noindent We have $\eta(\eta(\tau_i, \tau_j),\eta(\tau_i,\tau_\ell))=\eta((\tau_n^q\tau_1)^{n-i}\tau_{n-i+j},(\tau_n^q\tau_1)^{n-i}\tau_{n-i+\ell})=\eta(\tau_{n-i+j}, \tau_{n-i+\ell}),$ and by Lemma~\ref{ft}~(4) we get $D(n-i+\ell,n-i+j)=1$, yielding $\eta(\tau_{n-i+j}, \tau_{n-i+\ell})=(\tau_n^q\tau_1)^{i-j} \tau_{n+\ell-j},$ while
		$$\eta(\eta(\tau_j, \tau_i),\eta(\tau_j,\tau_\ell))=\eta((\tau_n^q\tau_1)^{n-i},(\tau_n^q\tau_1)^{n-j}\tau_{n-j+\ell})=\eta(1, (\tau_n^q\tau_1)^{i-j}\tau_{n-j+\ell})=(\tau_n^q\tau_1)^{i-j}\tau_{n-j+\ell}.$$
		
		\item Case $D(\ell,j)=0$, $D(j,i)=D(\ell,i)=1$.
		
		\noindent We have $\eta(\eta(\tau_i, \tau_j),\eta(\tau_i,\tau_\ell))=\eta((\tau_n^q\tau_1)^{n-i}\tau_{n-i+j},(\tau_n^q\tau_1)^{n-i}\tau_{n-i+\ell})=\eta(\tau_{n-i+j}, \tau_{n-i+\ell}),$ and by Lemma~\ref{ft}~(4) we get $D(n-i+\ell,n-i+j)=0$, yielding \begin{align*}\eta(\tau_{n-i+j}, \tau_{n-i+\ell})=(\tau_n^q\tau_1)^{i-j}\tau_n^{q-1+D(n+\ell-j)} \tau_{n+\ell-j+1},\end{align*}
		\noindent while
		\begin{align*}\eta(\eta(\tau_j, \tau_i),\eta(\tau_j,\tau_\ell))&=\eta((\tau_n^q\tau_1)^{n-i}, (\tau_n^q\tau_1)^{n-j}\tau_n^{q-1+D(n+\ell-j)}\tau_{n-j+\ell+1})\\ &=(\tau_n^q\tau_1)^{i-j}\tau_n^{q-1+D(n+\ell-j)} \tau_{n+\ell-j+1}.
		\end{align*} 
		\item \noindent Case $D(\ell,j)=1=D(\ell,i)$, $D(j,i)=0$. 
		
		\noindent On one hand we have \begin{align*}\eta(\eta(\tau_i, \tau_j),\eta(\tau_i,\tau_\ell))&=\eta((\tau_n^q\tau_1)^{n-i} \tau_n^{q-1+D(n-i+j)} \tau_{n-i+j+1}, (\tau_n^q\tau_1)^{n-i}\tau_{n-i+\ell})\\
			&=\eta(\tau_n^{q-1+D(n-i+j)} \tau_{n-i+j+1}, \tau_{n-i+\ell}).
		\end{align*} 
		If $D(n-i+j)=0$, then by Lemma~\ref{tech_tn_3} we get for $q\neq 1$ that
		$$\eta(\tau_n^{q-1} \tau_{n-i+j+1}, \tau_{n-i+\ell})=(\tau_n^q\tau_1)^{i-j-1} \tau_n^{D(n-i+\ell+1,n-i+j+1)+D(n-i+\ell)}\tau_{n-j+\ell}=(\tau_n^q\tau_1)^{i-j-1}\tau_{n-j+\ell},$$ where for the last equality one proceeds as follows; using the conditions $D(\ell,j)=1=D(\ell,i)$, $D(j,i)=0$ together with $D(n-i+j)=0$ we get $D(n-i+\ell+1,n-i+j+1)=\frac{-r-k_{n-i+\ell}+k_{n-i+\ell+1}}{n}=-D(n-i+\ell)$. If $q=1$ then one similarly checks that $D(n-i+\ell, n-i+j+1)=-D(n+\ell-j-1)$ (which holds also for $q\neq 1$), which gives $D(n-i+\ell, n-i+j+1)=0=D(n+\ell-j-1)$ since both lie in $\{0,1\}$ (Lemma~\ref{lem_01}). We then conclude using~\eqref{right_great} that $\eta(\tau_{n-i+j+1}, \tau_{n-i+\ell})=(\tau_n^q\tau_1)^{i-j-1} \tau_n^{D(n+\ell-j-1)} \tau_{n+\ell-j}=(\tau_n^q\tau_1)^{i-j-1}  \tau_{n+\ell-j}.$  
		
		\noindent If $D(n-i+j)=1$, then one similarly checks that $D(n-i+\ell+1,n-i+j+1)=1-D(n-i+\ell)$. Hence by Lemma~\ref{tech_tn_3}, if $D(n-i+\ell)=1$ we get that $$\eta(\tau_n^{q} \tau_{n-i+j+1}, \tau_{n-i+\ell})=(\tau_n^q\tau_1)^{i-j-1} \tau_n^{D(n-i+\ell+1,n-i+j+1)}\tau_{n-j+\ell}=(\tau_n^q\tau_1)^{i-j-1}\tau_{n-j+\ell},$$ and if $D(n-i+\ell)=0$ we get also by Lemma~\ref{tech_tn_3} that $$\eta(\tau_n^{q} \tau_{n-i+j+1}, \tau_{n-i+\ell})=\eta(\tau_{n-i+j+1}, \tau_{n-i+\ell+1})=(\tau_n^q\tau_1)^{i-j-1} \tau_{n-j+\ell}.$$
		
		\noindent On the other hand we have $$\eta(\eta(\tau_j, \tau_i),\eta(\tau_j,\tau_\ell))=\eta(\tau_n^q \tau_1)^{n-i+1},(\tau_n^q\tau_1)^{n-j} \tau_{n-j+\ell})=\eta(1,(\tau_n^q\tau_1)^{i-j-1}\tau_{n-j+\ell})=(\tau_n^q\tau_1)^{i-j-1}\tau_{n-j+\ell}.$$

		\item Cases $(D(\ell,j)=D(j,i)=1, D(\ell,i)=0)$ and $(D(\ell,j)=0=D(j,i), D(\ell,i)=1)$. By Lemma~\ref{ft}~(2), these situations cannot appear.
			
		\item Case $D(\ell,j)=0=D(\ell,i)$, $D(j,i)=1$. We will show that  \begin{align*}\eta(\eta(\tau_i, \tau_j),\eta(\tau_i,\tau_\ell))=(\tau_n \tau_1)^{i-j} \tau_n^{q-1+D(n-j+\ell)}\tau_{n-j+\ell+1}=\eta(\eta(\tau_j, \tau_i),\eta(\tau_j,\tau_\ell))
		\end{align*} 
		On one hand we have \begin{align*}\eta(\eta(\tau_i, \tau_j),\eta(\tau_i,\tau_\ell))&=\eta((\tau_n^q\tau_1)^{n-i} \tau_{n-i+j}, (\tau_n^q\tau_1)^{n-i}\tau_n^{q-1+D(n-i+\ell)}\tau_{n-i+\ell+1}) \\
			&=\eta(\tau_{n-i+j}, \tau_n^{q-1+D(n-i+\ell)}\tau_{n-i+\ell+1}).\end{align*}
		Using the conditions $D(\ell,j)=0=D(\ell,i)$, $D(j,i)=1$ we obtain $k_{n-i+j}+k_{n-j+\ell}=k_{\ell+n-i}$. Hence if $D(n-i+\ell)=0$, we have $D(n-i+j)=0=D(n-j+\ell)$. In this case, if $q\neq 1$ then by Lemma~\ref{last_tech} we get \begin{align*} \eta(\tau_{n-i+j}, \tau_n^{q-1}\tau_{n-i+\ell+1})=(\tau_n \tau_1)^{i-j} \tau_n^{q-2+D(n-i+\ell+2, n-i+j+1)+D(n-i+\ell+1)} \tau_{n-j+\ell+1}.
		\end{align*} But using that $D(n-i+\ell)=D(n-i+j)=D(n-j+\ell)=0$ and $k_{n+\ell-i}=k_{n+j-i}+k_{n+\ell-j}$ we obtain $D(n-i+\ell+2, n-i+j+1)+D(n-i+\ell+1)=1$, yielding \begin{align*} \eta(\tau_{n-i+j}, \tau_n^{q-1}\tau_{n-i+\ell+1})=(\tau_n \tau_1)^{i-j} \tau_n^{q-1}\tau_{n-j+\ell+1}=(\tau_n \tau_1)^{i-j} \tau_n^{q-1+D(n-j+\ell)}\tau_{n-j+\ell+1}.
		\end{align*} If $q=1$ then we have $\eta(\tau_{n-i+j}, \tau_n^{q-1}\tau_{n-i+\ell+1})=\eta(\tau_{n-i+j}, \tau_{n-i+\ell+1}).$ We then similarly check that $D(n-i+\ell+1, n-i+j)=1$, in particular $j\neq \ell+1$, hence by~\eqref{right_great} we get \begin{align*}
			\eta(\tau_{n-i+j}, \tau_{n-i+\ell+1})=(\tau_n \tau_1)^{i-j} \tau_{n-j+\ell+1}=(\tau_n \tau_1)^{i-j} \tau_n^{q-1}\tau_{n-j+\ell+1}=(\tau_n \tau_1)^{i-j} \tau_n^{q-1+D(n-j+\ell)}\tau_{n-j+\ell+1}.
		\end{align*} If $D(n-i+\ell)=1$ then if $D(n-i+j)=0$ by Lemma~\ref{last_tech} we get $\eta(\tau_{n-i+j}, \tau_n^{q}\tau_{n-i+\ell+1})=\tau_n^q\tau_1 \eta(\tau_{n-i+j+1}, \tau_{n-i+\ell+1})$. Now Lemma~\ref{ft}~(1) yields $D(n-i+\ell+1, n-i+j+1)=0$, and by~\eqref{right_great} we get $\eta(\tau_{n-i+j+1}, \tau_{n-i+\ell+1})=(\tau_n^q\tau_1)^{i-j-1} \tau_n^{q-1+D(n-j+\ell)}\tau_{n-j+\ell+1}$.
	
		If $D(n-i+\ell)=1$ and $D(n-i+j)=1$ then by Lemma~\ref{last_tech} again we get \begin{align*} 
			\eta(\tau_{n-i+j}, \tau_n^q\tau_{n-i+\ell+1})=(\tau_n^q\tau_1)^{i-j}\tau_n^{q-1+D(n-i+\ell+2, n-i+j+1)+D(n-i+\ell+1)-D(n-i+j)}\tau_{n-j+\ell+1}.  
		\end{align*} Using $k_{n+\ell-i}=k_{n+j-i}+k_{n+\ell-j}$ and $D(n-i+\ell)=D(n-i+j)=1$ we obtain $D(n-i+\ell+2, n-i+j+1)+D(n-i+\ell+1)=1+D(n-j+\ell)$, yielding the expected value.
		
		\noindent On the other hand we have \begin{align*} 
			\eta(\eta(\tau_j, \tau_i), \eta(\tau_j, \tau_\ell))&=\eta((\tau_n^q\tau_1)^{n-i}, (\tau_n^q\tau_1)^{n-j} \tau_n^{q-1+D(n-j+\ell)}\tau_{n-j+\ell+1})\\ &=(\tau_n^q\tau_1)^{i-j} \tau_n^{q-1+D(n-j+\ell)}\tau_{n-j+\ell+1}.
		\end{align*} 
		\item Case $D(\ell,j)=1$, $D(\ell,i)=D(j,i)=0$. We show that $$\eta(\eta(\tau_i, \tau_j), \eta(\tau_i, \tau_\ell))=(\tau_n^q\tau_1)^{i-j-1}\tau_{n-j+\ell}=\eta(\eta(\tau_j, \tau_i), \eta(\tau_j, \tau_\ell)).$$
		
		\noindent On one hand we have \begin{align*}
			\eta(\eta(\tau_i, \tau_j), \eta(\tau_i, \tau_\ell))&=\eta((\tau_n^q\tau_1)^{n-i}\tau_n^{q-1+D(n-i+j)}\tau_{n-i+j+1},(\tau_n^q\tau_1)^{n-i}\tau_n^{q-1+D(n-i+\ell)}\tau_{n-i+\ell+1})\\ &=\eta(\tau_n^{D(n-i+j)}\tau_{n-i+j+1},\tau_n^{D(n-i+\ell)}\tau_{n-i+\ell+1}).
		\end{align*}
		Assume that $D(n-i+\ell)=0$. Combining the three conditions $D(\ell,j)=1$, $D(\ell,i)=D(j,i)=0$ we get that $k_{\ell+n-i}=k_{j+n-i}+k_{\ell+n-j}$. It follows that if $D(\ell+n-i)=0$, then $D(j+n-i)=0=D(\ell+n-j)$. Hence the case $D(n-i+\ell)=0$ and $D(n-i+j)=1$ cannot appear.

		Assume that $D(n-i+\ell)=D(n-i+j)$. We have \begin{align*} 
			n D(n-i+\ell+1, n-i+j+1)&= -k_{n-i+j+1}+n+k_{n-i+\ell+1}-k_{n+\ell-j}\\ &=-k_{n-i+j}+n+k_{n-i+\ell}-k_{n+\ell-j}=n,
		\end{align*} where the second equality is obtained using $D(n-i+\ell)=D(n-i+j)$. Hence we have $D(n-i+\ell+1, n-i+j+1)=1$ and by~\eqref{right_great} for $\varepsilon\in\{0,1\}$ we get $\eta(\eta(\tau_i, \tau_j), \eta(\tau_i, \tau_\ell))=\eta(\tau_n^{\varepsilon}\tau_{n-i+j+1},\tau_n^{\varepsilon}\tau_{n-i+\ell+1})=\eta(\tau_{n-i+j+1},\tau_{n-i+\ell+1})=(\tau_n^q\tau_1)^{i-j-1}\tau_{n-j+\ell}.$ 
		
		Now assume that $D(n-i+\ell)=1$ and $D(n-i+j)=0$. We have \begin{align*}
			&n(-D(n-i+j+1)+D(n-i+\ell+2, n-i+j+2)+D(n-i+\ell+1))\\ &=-(r+k_{n-i+j+1}-k_{n-i+j+2})-k_{n-i+j+2}+n+k_{n-i+\ell+2}-k_{n+\ell-j}+r+k_{n-i+\ell+1}-k_{n-i+\ell+2} \\&=-k_{n-i+j+1}+n+k_{n-i+\ell+1}-k_{n+\ell-j}=n D(n-i+\ell+1, n-i+j+1)=0,
		\end{align*}where the last equality is obtained by Lemma~\ref{ft}~(1). Hence if $q\neq 1$ or $q=1$ and $D(n-i+j+1)=1$, by Lemma~\ref{last_tech} we get $\eta(\tau_{n-i+j+1},\tau_n \tau_{n-i+\ell+1})=(\tau_n^q\tau_1)^{i-j-1}\tau_{n-j+\ell}$. It remains to threat the case when $q=1$ and $D(n-i+j+1)=0$. In this case by Lemma~\ref{last_tech} we have $\eta(\tau_{n-i+j+1}, \tau_n \tau_{n-i+\ell+1})=(\tau_n^q \tau_1)\eta(\tau_{n-i+j+2}, \tau_{n-i+\ell+1})$. Since $D(n-i+\ell)=1$ and $D(n-i+j+1)=0$, by Lemma~\ref{ft}~(1) we get $D(n-i+\ell+1, n-i+j+2)=0$, hence~\eqref{right_great} yields $\eta(\tau_{n-i+j+2}, \tau_{n-i+\ell+1})=(\tau_n^q\tau_1)^{i-j-2} \tau_n^{D(n+\ell-j-1)} \tau_{n-j+\ell}$. To conclude this case if therefore suffices to show that $D(n+\ell-j-1)=0$. This holds true as \begin{align*}
			0&=n D(n-i+\ell+1, n-i+j+2)=n + k_{n-i+\ell+1}-k_{n-i+j+2}-k_{n+\ell-j-1}\\ &=n-n+k_{n-i+\ell}+r - k_{n-i+j}-2r-k_{n+\ell-j-1}=k_{n+\ell-j}-r-k_{n+\ell-j-1}=-D(n+\ell-j-1). 
		\end{align*}  
		
		\noindent On the other hand we have \begin{align*}
			\eta(\eta(\tau_j, \tau_i), \eta(\tau_j, \tau_\ell))&=\eta((\tau_n^q\tau_1)^{n-i+1}, (\tau_n^q\tau_1)^{n-j} \tau_{n-j+\ell})=\eta(1, (\tau_n^q\tau_1)^{i-j-1}\tau_{n-j+\ell})\\&=(\tau_n^q\tau_1)^{i-j-1}\tau_{n-j+\ell}.
		\end{align*} 
		
		\item Case $D(\ell,j)=D(\ell,i)=D(j,i)=0$.
		
		\noindent On one hand, exactly the same computation as in the previous case gives $\eta(\eta(\tau_i, \tau_j), \eta(\tau_i, \tau_\ell))=\eta(\tau_n^{D(n-i+j)}\tau_{n-i+j+1},\tau_n^{D(n-i+\ell)}\tau_{n-i+\ell+1})$. Combining the conditions $D(\ell,j)=D(\ell,i)=D(j,i)=0$ yields $k_{\ell+n-i}+n=k_{j+n-i}+k_{\ell+n-j}$. It follows that if $D(j+n-i)=0$ or $D(\ell+n-j)=0$, then $D(\ell+n-i)=0$. In particular the case when $D(n-i+j)=0$ and $D(n-i+\ell)=1$ is excluded.
		
		Assume that $D(n-i+\ell)=D(n-i+j)$. We then have
		\begin{align*}	n D( n-i+\ell+1, n-i+j+1)&= -k_{n-i+j+1}+n+k_{n-i+\ell+1}-k_{n+\ell-j}\\ &=-k_{n-i+j}+n+k_{n-i+\ell}-k_{n+\ell-j}=0,
		\end{align*} where the second equality is obtained using the fact that $D(n-i+\ell)=D(n-i+j)$. Hence we have $D(n-i+j+1, n-i+\ell+1)=0$ and by~\eqref{right_great} for $\varepsilon\in\{0,1\}$ we get $\eta(\eta(\tau_i, \tau_j), \eta(\tau_i, \tau_\ell))=\eta(\tau_n^{\varepsilon}\tau_{n-i+j+1},\tau_n^{\varepsilon}\tau_{n-i+\ell+1})=\eta(\tau_{n-i+j+1},\tau_{n-i+\ell+1})=(\tau_n^q\tau_1)^{i-j-1}\tau_n^{q-1+D(n-j+\ell)}\tau_{n-j+\ell+1}.$
		
		Now assume that $D(n-i+\ell)=0$ and $D(n-i+j)=1$. We have \begin{align*}
			&n (D(n-i+\ell+2, n-i+j+1)+D(n-i+\ell+1))\\ &=n+k_{n-i+\ell+2}-k_{n-i+j+1}-k_{n-j+\ell+1}+r+k_{n-i+\ell+1}-k_{n-i+\ell+2}\\ &=n-k_{n-i+j}-r+n-k_{n-j+\ell+1}+r+r+k_{n-i+\ell}\\ &=n+r+k_{n-j+\ell}-k_{n-j+\ell+1}=n (1+D(n+\ell-j)),
		\end{align*} hence if $q\neq 1$ or ($q=1$ and $D(n-i+\ell+1)=1$) by Lemma~\ref{tech_tn_3}~(2) we get \begin{align*}\eta(\tau_n \tau_{n-i+j+1}, \tau_{n-i+\ell+1})=(\tau_n^q\tau_1)^{i-j-1} \tau_n^{q-1+D(n-j+\ell)} \tau_{n-j+\ell+1}.
		\end{align*} It remains to threat the case when $q=1$ and $D(n-i+\ell+1)=0$. In this case by Lemma~\ref{tech_tn_3} we have $\eta(\tau_n \tau_{n-i+j+1}, \tau_{n-i+\ell+1})=\eta(\tau_{n-i+j+1}, \tau_{n-i+\ell+2})$ and $j\neq \ell+1$ as $D(n-i+\ell+1)\neq D(n-i+j)$. We have \begin{align*}
			&n D(n-i+\ell+2, n-i+j+1)=k_{n-i+\ell+2}+n-k_{n-i+j+1}-k_{n-j+\ell+1}\\ &=k_{n-i+\ell}+r-k_{n-i+j}-k_{n-j+\ell+1}=r+k_{n+\ell-j}-n -k_{n-j+\ell+1}= n(D(n+\ell-j)+1),
		\end{align*}where the second equality is obtained using $D(n-i+\ell)=0=D(n-i+\ell+1)$ and $D(n-i+j)=1$. This forces $D(n-i+\ell+2,n-i+j+1)=1$ and $D(n+\ell-j)=0$ since they line in~$\{0,1\}$ (Lemma~\ref{lem_01}). By~\eqref{right_great} we get \begin{align*}
			\eta(\tau_{n-i+j+1}, \tau_{n-i+\ell+2})=(\tau_n^q \tau_1)^{i-j-1} \tau_{n-j+\ell+1}=(\tau_n^q \tau_1)^{i-j-1} \tau_n^{q-1+D(n+\ell-j)}\tau_{n-j+\ell+1}.
		\end{align*}

			\noindent On the other hand we have \begin{align*} 
				\eta(\eta(\tau_j, \tau_i),\eta(\tau_j, \tau_\ell))&=\eta((\tau_n^q\tau_1)^{n-i+1}, (\tau_n^q\tau_1)^{n-j}\tau_n^{q-1+D(n-j+\ell)}\tau_{n-j+\ell+1})\\ &=\eta(1,(\tau_n^q\tau_1)^{i-j-1}\tau_n^{q-1+D(n-j+\ell)}\tau_{n-j+\ell+1})\\ &=(\tau_n^q\tau_1)^{i-j-1}\tau_n^{q-1+D(n-j+\ell)}\tau_{n-j+\ell+1}.	
			\end{align*}
	\end{itemize}\end{proof}

\end{document}